\let\today\relax
\def\ps@pprintTitle{%
    \let\@oddhead\@empty
    \let\@evenhead\@empty
    \def\@oddfoot{\footnotesize\itshape
         {} \hfill\today}%
    \let\@evenfoot\@oddfoot
    }
\newtheorem{lemma}{Lemma}[section]
\newtheorem{remark}{Remark}[section]
\newtheorem{proposition}{Proposition}[section]
\newtheorem{theorem}{Theorem}[section]
\newtheorem{corollary}{Corollary}[section]
\newtheorem{problem}{Problem}
\def\CT{\mathcal{T}}
\def\E{K}
\def\G{\Gamma}
\def\LO{L^2(\O)}
\def\N{\mathbb{N}}
\def\O{\Omega}
\def\P{\mathbb{P}}
\def\R{\mathbb{R}}
\def\disp{\displaystyle}
\def\l{\lambda}
\def\PiK{\Pi_{K}^{\Delta}}
\def\sp{\mathop{\mathrm{sp}}\nolimits}
\def\Vh{V_h}
\def\HdsO{{H^{2+s}(\O)}}
\def\HdoK{{H^{2}(\E)}}
\journal{}
\date{\today}
\def\LO{L^2(\Omega)}
\def\Vh{V_h}
\def\N{\mathbb{N}}
\def\R{\mathbb{R}}
\newcommand{\dl}{~\textrm{d}l}
\newcommand{\dK}{~\textrm{d}K}
\DeclareMathOperator{\spec}{sp}
\begin{document}
\begin{frontmatter}

\title{A posteriori error estimates for a $C^1$ virtual element method applied to the thin plate vibration problem.}

\author[1]{Franco Dassi}
\ead{franco.dassi@unimib.it}
\address[1]{Dipartimento di Matematica e Applicazioni, Università degli studi di Milano Bicocca, Via Roberto Cozzi 55 - 20125 Milano, Italy.}
\author[2]{Andr\'es E. Rubiano}
\ead{andres.rubianomartinez@monash.edu}
\address[2]{School of Mathematics, Monash University, 9 Rainforest Walk, Melbourne, VIC 3800, Australia.}
\author[3]{Iv\'an Vel\'asquez}
\ead{ivan.velasquez@unimilitar.edu.co}
\address[3]{Departamento de Matem\'aticas, Universidad Militar Nueva Granada, Bogot\'a, Colombia.}

\begin{abstract} 
We propose and analyse residual-based a posteriori error estimates for the virtual element discretisation applied to the thin plate vibration problem in both two and three dimensions.  Our approach involves a conforming $C^1$ discrete formulation suitable for meshes composed of polygons and polyhedra. The reliability and efficiency of the error estimator are established through a dimension-independent proof. Finally, several numerical experiments are reported to demonstrate the optimal performance of the method in 2D and 3D.

\end{abstract}

\begin{keyword} 
A posteriori error analysis in 2D and 3D 
\sep virtual element method
\sep Kirchhoff plates
\sep vibration spectral problem.
\end{keyword}

\end{frontmatter}

\setcounter{equation}{0}

\section{Introduction}\label{SEC:Introduction}
This paper focuses on the vibration problem of a thin plate modelled by the Kirchhoff-Love equations. In particular, given a bounded simply-connected Lipschitz domain $\Omega\subseteq \mathbb{R}^d$ ($d=2,3$)  with boundary $\Gamma:=\partial \Omega$. The deflection $u:\mathbb{R}^d\rightarrow \mathbb{R}$ and the vibration frequency $\omega>0$ satisfy the following eigenvaule problem, with $\lambda = \omega^2$.
\begin{align}\label{MPr}
  \begin{cases}
    \begin{aligned}
      \Delta^2 u              &= \lambda u,     && \text{in } \Omega, \\
      \mathcal{B}^{j}u   &= 0,             && \text{on } \Gamma,
    \end{aligned}
  \end{cases}
\end{align}
where, $\Delta^2 u := \Delta(\Delta u)$ denotes the fourth-order biharmonic operator. We also introduce the Hessian matrix, which consists of all second-order partial derivatives of $u$, denoted by $\nabla^2u$. 

On the other hand, the linear differential operator $\mathcal{B}^{j}$ ($j=1,2$) involves partial derivatives of the function $u$ (e.g. \cite[Section~2.3]{gazzola}).
Specifically, let $\boldsymbol{n}$ be the outward unit normal vector to $\Gamma$,  and $\partial_{\boldsymbol{n}}$ the normal derivative. We consider two types of homogeneous boundary conditions:
\begin{itemize}
\item {\bf Simply supported plate (SSP):}   
\begin{equation}\label{SSP}
\mathcal{B}^{j}u:=\Delta^{j-1}u=0\quad \mbox{on }\Gamma \quad \mbox{ for }j=1,2.
\end{equation}      
\item {\bf Clamped plate (CP):}   
\begin{equation}\label{CP}
\mathcal{B}^{j}u:=\partial_{\boldsymbol{n}}^{j-1}u=0\quad \mbox{on }\Gamma \quad \mbox{ for }j=1,2.
\end{equation}      
\end{itemize}

There is considerable interest in developing numerical methods for eigenvalue problems, among both practical applications \cite{BDMRS1995}, and theoretical analysis \cite{Boffi,BGG2012}. In particular, conforming finite element methods have been proposed to get the solution of this problem. However, due to the C$^1$ continuity requirement, it is well known that a high polynomial degree is required to achieve conformity in the discrete space \cite{ciarlet}, which leads to an important computational effort. To avoid this issue, we mention some alternatives applied to the thin plate vibration problem, such as: non-conforming methods \cite{BO,ciarlet,Ra}, discontinuous schemes \cite{ENGEL2002,GIANI2015}, and mixed formulations \cite{ciarlet1974mixed,monk1987,MoRo2009}.  

 The Virtual Element Method (VEM), first introduced in \cite{BBCMMR2013,BBMR2014}, has the flexibility to be applied to more general polygonal/polytopal meshes, and by construction, one can define high regular discrete spaces \cite{BM14}. Moreover, $C^1$-conforming VEM spaces necessitate only 3 degrees of freedom (DoFs) per vertex when $\Omega$ is a bidimensional domain, and 4 DoFs for the three-dimensional case, providing an efficient way to retain the accuracy of the method at low computational cost.  We provide a brief, non-exhaustive overview of VEM discretisations for equations involving the biharmonic operator \cite{ABSV2016,C1VEM_Polyhedral,BM13,MCS2021,MS2022}, and different eigenvalue problems \cite{gardini2,DV_camwa2022,gardini1,Meng2020,C0VEM2021,SteklovVEM2019,MRR2015,MV18,MVsiam2021,MRV2018}.

 Adaptive schemes driven by a posteriori error estimators allow optimal convergence recovery in cases such as singular solutions and complex geometries (e.g., non-convex domains and sharp corners), we refer to \cite{CARSTENSEN2024,feng2023,G2014,Li2018} for previous work on adaptive FEM for the biharmonic problem. The main advantage of VEMs is its natural handling of hanging nodes during adaptive mesh refinement algorithms, \cite{cangiani2017posteriori,dassi2025post,MORA2017,munar2024} provide some examples on adaptive VEMs.
 
 Our goal here is to derive a reliable and efficient a posteriori global error estimator $\eta$ for \eqref{MPr}, i.e. the following property holds for $\eta$: 
 $$C_1 (\text{error} + \text{oscillation terms}) \leq \eta \leq C_2 (\text{error}  + \text{oscillation terms} + \text{higher-order terms}).$$
 Typically, oscillation terms appear in virtual element methods due to the projection and stabilisation required to achieve the computability of virtual functions. On the other hand, high-order terms are negligible when $h\rightarrow 0$. 

\paragraph{Plan of the paper} The contents of this paper are organised as follows. The remainder of this section contains preliminary notational conventions. Section~\ref{Sec:ModPro_Stek} presents the continuous spectral formulation for the vibration problem and provides the spectral characterisation for the eigenvalues. The virtual element space, along with its spectral properties and error estimates, is provided in Section~\ref{SEC:DISCRETE}. Section~\ref{SEC:EST_A_POST} is devoted to deriving a reliable and efficient residual-type estimator. Finally, our theoretical results are illustrated via numerical examples in Section~\ref{sec:numerical-examples}.
 
\paragraph{Recurrent notation} Throughout this paper, we shall use standard notations for Sobolev spaces, norms and semi-norms as in \cite{adams2003sobolev}. The notation $a \lesssim b$ means that there exists a positive constant $C$ independent of the mesh parameter $h$, such that $a\leq C b$. We underline that the constant $C$ might stand for different values at its different occurrences.
In addition, for any linear bounded operator $\mathcal{S}:\,\mathcal{V}\to \mathcal{V}$, defined on a Hilbert space $\mathcal{V}$, we denote its spectrum by 
$\sp(\mathcal{S}):=\left\{z\in\mathbb{C}:\,\left(zI-\mathcal{S}\right)\mbox{ is not
	invertible}\right\}$ and by $\rho(\mathcal{S}):=\mathbb{C}\setminus\sp(\mathcal{\mathcal{S}})$ the resolvent
set of $\mathcal{S}$. Moreover, for any $z\in\rho(\mathcal{V})$,
$R_z(\mathcal{S}):=\left(zI-\mathcal{S}\right)^{-1}:\,\mathcal{V}\to \mathcal{\mathcal{V}}$ denotes the resolvent operator of $\mathcal{S}$ corresponding to $z$.

\section{Two continuous spectral formulations}
\label{Sec:ModPro_Stek}

In this section we introduce two continuous variational formulations associated with the vibration problems of a simply supported and clamped plate~\eqref{MPr}.

\begin{itemize}
    \item {\bf SSP:} Find $(\lambda,u)\in  \mathbb{R}\times \left(H^2(\Omega)\cap H_0^1(\Omega)\right)$, with $u\neq 0$ such that
\begin{equation}
    \int_{\Omega}\nabla^2 u: \nabla^2 v\ d\Omega=\lambda \int_{\Omega} u v\ d\Omega\qquad \forall v\in H^2(\Omega)\cap H_0^1(\Omega).\label{WForSSP}
\end{equation}
    \item {\bf CP:} Find $(\lambda,u)\in  \mathbb{R}\times H_0^2(\Omega)$, with $u\neq 0$ such that
\begin{equation}
    \int_{\Omega}\nabla^2 u: \nabla^2 v\ d\Omega=\lambda \int_{\Omega} u v\ d\Omega\qquad \forall v\in H_0^2(\Omega).\label{WForCP}
\end{equation}
\end{itemize}
For notational convenience, we introduce the following bilinear forms:
\begin{align}
	&a: H^2(\Omega)\times H^2(\Omega)  \to \mathbb{R},\qquad a(u,v):=\int_{\Omega}\nabla^2 u: \nabla^2 v\ d\Omega\,,\label{def_aD}\\
	&b:L^2(\Omega)\times L^2(\Omega)\to \mathbb{R},\qquad \quad b(u,v):=\int_{\Omega}u\,v\ d\Omega\,.\label{def_b0}
\end{align}
The following result is essential to characterize the spectrum of eigenvalue problems {\bf SSP} and {\bf CP}.
\begin{lemma}\label{Bound_Elli}
There exist positive constants $C$ and $\alpha_{\Omega}$ such that
\begin{align}
|a(u,v)|&\leq C||u||_{2,\Omega}||v||_{2,\Omega} 
&\forall u,v&\in H^2(\Omega),\label{bound_a}
\\
|b(u,v)|&\leq C||u||_{0,\Omega}||v||_{0,\Omega}
&\forall u,v&\in L^2(\Omega),\label{bound_b0}\\
a(v,v) &\geq \alpha_{\Omega}  ||v||_{2,\Omega}^2
&\forall v&\in  H^2(\Omega)\cap H_0^1(\Omega),\label{ellip_aSSP}\\
a(v,v) &\geq \alpha_{\Omega}  ||v||_{2,\Omega}^2
&\forall v&\in  H_0^2(\Omega)\label{ellip_aCP}.
\end{align}
\end{lemma}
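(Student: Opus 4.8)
The plan is to treat the two continuity bounds and the two ellipticity bounds separately, since the former are immediate consequences of Cauchy--Schwarz while the latter rest on generalized Poincar\'e--Friedrichs inequalities.

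For the boundedness estimates \eqref{bound_a} and \eqref{bound_b0}, I would simply invoke the Cauchy--Schwarz inequality. Applying it pointwise to the Frobenius product and then in $L^2(\Omega)$ gives $|a(u,v)| \leq \|\nabla^2 u\|_{0,\Omega}\,\|\nabla^2 v\|_{0,\Omega} \leq \|u\|_{2,\Omega}\,\|v\|_{2,\Omega}$, since the $L^2$-norm of the Hessian is bounded by the full $H^2$-norm; the bound for $b$ is likewise a direct Cauchy--Schwarz estimate in $L^2(\Omega)$. Neither requires further work.

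The substance lies in the two ellipticity bounds. Observe that $a(v,v)=\|\nabla^2 v\|_{0,\Omega}^2=|v|_{2,\Omega}^2$, so \eqref{ellip_aSSP} and \eqref{ellip_aCP} amount to showing that the $H^2$-seminorm controls the full $H^2$-norm on the respective subspaces. For the clamped case \eqref{ellip_aCP}, I would apply the Poincar\'e--Friedrichs inequality twice: since $v\in H_0^2(\Omega)\subset H_0^1(\Omega)$ one has $\|v\|_{0,\Omega}\lesssim |v|_{1,\Omega}$, and since $v$ together with its tangential derivative and $\partial_{\boldsymbol{n}} v$ vanish on $\Gamma$, the full gradient $\nabla v$ belongs to $\bigl(H_0^1(\Omega)\bigr)^d$, whence applying Poincar\'e componentwise yields $|v|_{1,\Omega}=\|\nabla v\|_{0,\Omega}\lesssim \|\nabla^2 v\|_{0,\Omega}=|v|_{2,\Omega}$. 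Combining the two estimates bounds $\|v\|_{0,\Omega}^2+|v|_{1,\Omega}^2+|v|_{2,\Omega}^2$ by a multiple of $|v|_{2,\Omega}^2$.

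For the simply supported case \eqref{ellip_aSSP} the previous device fails, because $\partial_{\boldsymbol{n}} v$ need not vanish on $\Gamma$ and hence $\nabla v\notin\bigl(H_0^1(\Omega)\bigr)^d$; this is the main obstacle. I would instead argue by compactness and contradiction. Suppose the inequality fails uniformly: then there is a sequence $(v_n)\subset H^2(\Omega)\cap H_0^1(\Omega)$ with $\|v_n\|_{2,\Omega}=1$ and $|v_n|_{2,\Omega}\to 0$. By the Rellich--Kondrachov theorem the embedding $H^2(\Omega)\hookrightarrow H^1(\Omega)$ is compact, so along a subsequence $v_n\to v$ in $H^1(\Omega)$; together with $\nabla^2 v_n\to 0$ in $L^2(\Omega)$ this makes $(v_n)$ Cauchy in $H^2(\Omega)$, so $v_n\to v$ in $H^2(\Omega)$ with $\nabla^2 v=0$ and $\|v\|_{2,\Omega}=1$. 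Thus $v$ is affine, and since $H^2(\Omega)\cap H_0^1(\Omega)$ is closed the limit satisfies $v=0$ on $\Gamma$. An affine function vanishing on the boundary of a bounded open set must vanish identically, because its zero set is a hyperplane that cannot contain $\partial\Omega$, contradicting $\|v\|_{2,\Omega}=1$. The same contradiction argument in fact re-proves \eqref{ellip_aCP} as well, providing a unified treatment should one prefer to avoid the iterated Poincar\'e step.
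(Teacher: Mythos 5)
Your proposal is correct and follows essentially the same route as the paper, whose proof is a one-line citation: Cauchy--Schwarz for \eqref{bound_a}--\eqref{bound_b0}, the generalised Poincar\'e inequality for \eqref{ellip_aSSP}, and the equivalence of $\Vert\nabla^2 v\Vert_{0,\Omega}$ with the full $H^2$-norm on $H_0^2(\Omega)$ for \eqref{ellip_aCP}. You simply supply the standard proofs of the cited facts --- the iterated Poincar\'e--Friedrichs argument for the clamped case and the Rellich-based compactness/contradiction argument, which is the usual proof of the generalised Poincar\'e inequality, for the simply supported case --- both of which are sound.
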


\begin{proof}
Standard arguments in Sobolev spaces imply the estimates in \eqref{bound_a}   and~\eqref{bound_b0}. In addition, the proof of~\eqref{ellip_aSSP} and \eqref{ellip_aCP} are obtained from the Poincar\'e generalised inequality, and the fact that $\Vert \nabla^2 v\Vert_{0,\O}$ is equivalent to the usual norm in $H_0^2(\Omega)$, respectively.
\end{proof}

Since the bilinear form $a(\cdot,\cdot)$ is elliptic in both $H^2(\Omega)\cap H_0^1(\Omega)$ and $H_0^2(\Omega)$ spaces, then the analysis for the  problems~\eqref{WForSSP} and \eqref{WForCP} can be developed using same arguments. Therefore, from now on, we write $V$ to refer to both spaces. In what follows, we are interested in approximating the eigenpair of the following spectral problem.
\begin{problem}\label{WMPr} 
Find $(\lambda,u)\in \mathbb{R}\times V$, with $u\neq 0$ such that
\begin{equation*}
    a(u,v)=\lambda b(u,v)\qquad \forall v\in V.
\end{equation*}
\end{problem}
\subsection{The continuous source problem}\label{sec:contSolOp}
To deal with the a priori and a posteriori error analysis for the spectral  Problem~\ref{WMPr}, we define the following solution operators 
\begin{gather*}
T:V \rightarrow  V,
\quad  f \longmapsto w,\\
T^0:L^2(\Omega) \rightarrow  V\subseteq L^2(\Omega),
\quad f^0 \longmapsto w^0,
\end{gather*}
where $w:=Tf$ and $w^0:=T^0f$ are the two  unique solutions associated with their respective source problems
\begin{align}
a( w ,v)=b(f,v),\qquad&\forall v\in V,    \label{ContSourGPr}\\
a( w^0 ,v)=b(f^0,v),\qquad&\forall v\in V, \label{ContSourGPr0}
\end{align}
From Lemma~\ref{Bound_Elli} and the Lax-Milgram theorem, it is easy to check that the linear operators $T$ and $T^0$ are well defined and bounded. Moreover, the following results are standard in the literature of spectral theory of operators in Banach spaces (e.g. \cite{BO}).
\begin{proposition}
Given $(\lambda,u)\in\R\times V$, the following statements are equivalent:
\begin{itemize}
\item $(\lambda,u)\in\R\times V$ is a eigenpair of Problem~\ref{WMPr} with $u\neq 0$ and $\lambda\neq 0$;
\item $(\mu, u)\in\R\times V$ is a eigenpair of $T$, where $u\neq 0$ and $\mu:=1/\lambda.$
\end{itemize}
\end{proposition}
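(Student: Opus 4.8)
The plan is to verify the two implications separately, relying on the definition of $T$ through the source problem~\eqref{ContSourGPr} and on the ellipticity of $a(\cdot,\cdot)$ recorded in Lemma~\ref{Bound_Elli}.

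For the forward implication I would assume that $(\lambda,u)$ is an eigenpair of Problem~\ref{WMPr} with $u\neq 0$ and $\lambda\neq 0$, so that $a(u,v)=\lambda\, b(u,v)$ for all $v\in V$. Setting $\mu:=1/\lambda$, the aim is to identify $Tu$ with $\mu u$. By the very definition of $T$, the function $w:=Tu$ satisfies $a(w,v)=b(u,v)$ for every $v\in V$, while dividing the eigenvalue relation by $\lambda$ gives $b(u,v)=\mu\, a(u,v)=a(\mu u,v)$. Subtracting the two identities yields $a(Tu-\mu u,v)=0$ for all $v\in V$; testing with $v=Tu-\mu u$ and invoking the ellipticity~\eqref{ellip_aSSP}--\eqref{ellip_aCP} forces $Tu=\mu u$, so $(\mu,u)$ is an eigenpair of $T$ with $u\neq 0$.

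For the converse I would start from $Tu=\mu u$ with $u\neq 0$ and first argue that $\mu\neq 0$. Indeed, if $\mu=0$ then $Tu=0$, and the defining relation $a(Tu,v)=b(u,v)$ reduces to $b(u,v)=0$ for all $v\in V$; choosing $v=u$ gives $\|u\|_{0,\Omega}^2=0$, contradicting $u\neq 0$. Hence $\lambda:=1/\mu$ is well defined. Inserting $Tu=\mu u$ into $a(Tu,v)=b(u,v)$ and rearranging produces $a(u,v)=\lambda\, b(u,v)$ for all $v\in V$, which is precisely Problem~\ref{WMPr} with nonzero eigenvalue $\lambda$ and eigenfunction $u$.

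The argument is essentially algebraic once the solution operator is in place; the only point requiring a little care is ensuring that $0$ is not an eigenvalue of $T$ in the converse direction, which is where the definiteness of $b(\cdot,\cdot)$ as the $L^2$ inner product on nonzero functions enters. The ellipticity of $a(\cdot,\cdot)$ plays a dual role: it guarantees that $T$ is well defined (as already noted before the statement) and it supplies the uniqueness needed to pass from the variational identity $a(Tu-\mu u,v)=0$ to the pointwise eigenrelation $Tu=\mu u$.
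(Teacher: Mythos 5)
Your proof is correct: the forward direction via $a(Tu-\mu u,v)=0$ plus ellipticity, the converse via substitution into $a(Tu,v)=b(u,v)$, and the check that $\mu\neq 0$ through the definiteness of $b(\cdot,\cdot)$ are all sound. The paper itself gives no proof of this proposition, dismissing it as standard spectral theory (citing \cite{BO}); your argument is precisely that standard argument, so there is nothing to compare beyond noting that you have supplied the details the paper omits.
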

\begin{proposition}
The operators $T$ and $T^0$ are self-adjoint with respect to the inner products $a(\cdot,\cdot)$ in $V$ and $b(\cdot,\cdot)$ in $L^2(\Omega)$, respectively.
\label{prop:adjCont}
\end{proposition}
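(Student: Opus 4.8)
The plan is to reduce both claims to the symmetry of the bilinear forms $a(\cdot,\cdot)$ and $b(\cdot,\cdot)$, which is immediate from their integral definitions \eqref{def_aD}--\eqref{def_b0}, since $\nabla^2 u:\nabla^2 v=\nabla^2 v:\nabla^2 u$ and $uv=vu$ pointwise. The only ingredient beyond this symmetry is a judicious choice of test functions in the defining source problems \eqref{ContSourGPr} and \eqref{ContSourGPr0}.

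For $T$ I would first recall that, by definition, $w=Tf$ solves $a(Tf,v)=b(f,v)$ for all $v\in V$, and likewise $a(Tg,v)=b(g,v)$ for all $v\in V$. Since $f,g\in V$ themselves belong to the test space, I can take $v=g$ in the first identity to get $a(Tf,g)=b(f,g)$, and $v=f$ in the second to get $a(Tg,f)=b(g,f)$. Chaining these through the symmetry of $b$ and then of $a$ yields $a(Tf,g)=b(f,g)=b(g,f)=a(Tg,f)=a(f,Tg)$, which is exactly self-adjointness of $T$ with respect to $a(\cdot,\cdot)$.

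For $T^0$ the argument is structurally the same, but one has to be slightly more careful because now $f,g\in L^2(\Omega)$ are not admissible test functions. The fix is to route the computation through the range of the operator: since $T^0 f,\,T^0 g\in V$, I can test \eqref{ContSourGPr0} for $T^0 f$ against $v=T^0 g$ and the analogous equation for $T^0 g$ against $v=T^0 f$, obtaining $a(T^0 f,T^0 g)=b(f,T^0 g)$ and $a(T^0 g,T^0 f)=b(g,T^0 f)$. The symmetry of $a$ then forces $b(f,T^0 g)=b(g,T^0 f)$, and one final use of the symmetry of $b$ gives $b(T^0 f,g)=b(g,T^0 f)=b(f,T^0 g)$, the desired identity.

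There is no genuine obstacle here: the statement is a textbook consequence of symmetry, and indeed the excerpt flags it as standard. The only point deserving a moment's attention is the distinction between the two operators' domains — for $T$ the arguments are themselves legitimate test functions, whereas for $T^0$ one must detour through the $V$-valued images $T^0 f,\,T^0 g$ in order to respect the constraint that test functions lie in $V$.
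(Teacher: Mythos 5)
Your proposal is correct and follows essentially the same argument as the paper: for $T$ you test the two source problems against each other's data and chain through the symmetry of $b$ and $a$, yielding exactly the paper's identity $a(Tf,v)=b(f,v)=b(v,f)=a(Tv,f)=a(f,Tv)$; for $T^0$ you detour through the $V$-valued images $T^0f,\,T^0g$ as admissible test functions, which is precisely the paper's chain $b(T^0f^0,g^0)=b(g^0,T^0f^0)=a(T^0g^0,T^0f^0)=a(T^0f^0,T^0g^0)=b(f^0,T^0g^0)$. Your explicit remark about why $f,g\in L^2(\Omega)$ cannot be used directly as test functions for $T^0$ is a point the paper leaves implicit, but the underlying reasoning is identical.
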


\begin{proof} Given $v,f\in V$, and $f^0,g^0\in L^2(\Omega)$, we readily see that
\begin{gather*}
a(T f,v)
=b (f,v)
=b (v,f)
=a(T v,f)
=a(f,T v), \\
b(T^0f^0,g^0)
	=b(g^0,T^0f^0)
	=a(T^0g^0,T^0f^0)
	=a(T^0f^0,T^0g^0)
	=b(f^0,T^0g^0).
\end{gather*}
\end{proof}

The following lemma gives additional regularity results for the solutions of source problem~\eqref{ContSourGPr} and, consequently, for the eigenfunctions of the operator $T$. For simplicity, we only show the results involving the operator $T$, recalling that similar arguments hold also for $T^0$.

\begin{lemma}\label{lem:uZero}
There exist $s\in(1/2,1]$ and a positive constant $C_{\Omega}$ depending on $\Omega$ such that: 
for all $f,f^0\in L^2(\Omega)$, the source solutions $ w $ and $w^0$ in the  equations~\eqref{ContSourGPr} and \eqref{ContSourGPr0}, respectively, 
satisfy $ w,w^0 \in H^{2+s}(\Omega)$
and 
\begin{align}
|| w ||_{2+s,\Omega} &\leq C_{\Omega} ||f||_{0,\Omega},\nonumber\\[1ex]
|| w^0 ||_{2+s,\Omega}& \leq C_{\Omega} ||f^0||_{0,\Omega}.\nonumber
\end{align}
\end{lemma}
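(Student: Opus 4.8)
The plan is to read the variational source problems \eqref{ContSourGPr} and \eqref{ContSourGPr0} as weak formulations of the biharmonic boundary value problem and then invoke the elliptic regularity theory for the bilaplacian on Lipschitz domains. Since $w\in V\subset H^2(\Omega)$ and $a(\cdot,\cdot)$ is elliptic by Lemma~\ref{Bound_Elli}, the Lax--Milgram bound already furnishes the base estimate $\|w\|_{2,\Omega}\lesssim\|f\|_{0,\Omega}$; the whole content of the lemma is to upgrade this $H^2$ regularity to $H^{2+s}$ for some shift $s\in(1/2,1]$. First I would integrate by parts in \eqref{ContSourGPr}: testing against $v\in C_0^\infty(\Omega)$ identifies the strong equation $\Delta^2 w=f$ in the distributional sense, while the boundary data are recovered from the structure of $V$, namely $w=0$ (essential) together with the natural condition $\Delta w=0$ on $\Gamma$ in the SSP case, and $w=\partial_{\boldsymbol{n}}w=0$ on $\Gamma$ (both essential) in the CP case.

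For the simply supported plate I would exploit the classical decoupling of the fourth-order problem into two second-order Dirichlet problems. Setting $z:=-\Delta w$, the boundary condition $\Delta w=0$ on $\Gamma$ yields $z\in H_0^1(\Omega)$ solving $-\Delta z=f$, while $w$ itself solves $-\Delta w=z$ with $w=0$ on $\Gamma$. Applying the shift theorem for the Dirichlet Laplacian on a Lipschitz (polygonal/polyhedral) domain---which guarantees an exponent $s\in(1/2,1]$, determined by the largest interior opening of $\Omega$, controlling the fractional regularity gained by the solution operator (see, e.g., Grisvard and Dauge)---to the first problem gives a stable bound on $z$ in terms of $\|f\|_{0,\Omega}$, and applying it to the second problem then yields $w\in H^{2+s}(\Omega)$, the regularity being capped at $2+s$ precisely by the corner and edge singular functions of the second solve.

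For the clamped plate the decoupling is unavailable, and I would instead invoke directly the regularity theory for the Dirichlet problem of the bilaplacian (Blum--Rannacher in two dimensions, and the corresponding polyhedral results of Dauge and Maz'ya--Rossmann in three dimensions), which provides the same type of statement: there is $s\in(1/2,1]$ depending only on $\Omega$ such that $\Delta^2$ with clamped conditions maps $L^2(\Omega)$ boundedly into $H^{2+s}(\Omega)\cap V$, whence $\|w\|_{2+s,\Omega}\lesssim\|f\|_{0,\Omega}$. The estimate for $T^0$ follows verbatim, since \eqref{ContSourGPr0} differs from \eqref{ContSourGPr} only in that the datum is taken in $L^2(\Omega)$, and the right-hand side $b(\cdot,\cdot)$ never uses more than $L^2$ regularity of the source.

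The main obstacle is not the abstract scheme but making the value of $s$ and, crucially, the inequality $s>1/2$ legitimate: both depend on the geometry of $\Omega$ through the maximal corner angle in 2D and through the edge and vertex singularities in 3D, and the three-dimensional biharmonic regularity on general polyhedra is considerably more delicate than its two-dimensional counterpart. I would therefore either restrict to the class of domains (for instance convex, or with sufficiently small reentrant openings) for which the cited theory guarantees $s>1/2$, or state this range condition explicitly as the standing assumption under which the constant $C_\Omega$ is produced.
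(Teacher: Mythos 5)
Your overall strategy is the same as the paper's: the paper's entire proof is a one-line appeal to classical regularity theory for the biharmonic problem (citing Blum--Rannacher, Grisvard, and Savar\'e), and your clamped-plate branch --- identify the strong form, then invoke the fourth-order Dirichlet regularity results of Blum--Rannacher in 2D and Dauge/Maz'ya--Rossmann in 3D --- is precisely a fleshed-out version of that citation, including your (correct) caveat that $s>1/2$ is a genuinely geometric statement that the paper glosses over. For that branch there is nothing to object to.

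The simply supported branch, however, contains a genuine gap. The decoupling of the SSP problem into two second-order Dirichlet solves is only legitimate on convex domains: this is the Sapondzhyan--Babu\v{s}ka paradox. Concretely, for the variational solution $w\in H^2(\Omega)\cap H^1_0(\Omega)$ the auxiliary function $z:=-\Delta w$ is merely in $L^2(\Omega)$; the natural boundary condition $\Delta w=0$ holds only in a weak (edge-wise, on polygons) sense, and one cannot conclude $z\in H^1_0(\Omega)$. If instead you define $\tilde z\in H^1_0(\Omega)$ by $-\Delta \tilde z=f$ and then $\tilde w\in H^1_0(\Omega)$ by $-\Delta\tilde w=\tilde z$, then at a reentrant corner of opening $\omega>\pi$ the function $\tilde w$ inherits the $r^{\pi/\omega}$ singularity of $\tilde z$, so $\tilde w\notin H^2(\Omega)$ and hence $\tilde w\neq w$: the iterated solution solves a different problem from the variational one, and the shift estimate you derive for $\tilde w$ says nothing about $w$. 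Since the lemma is stated for general simply-connected Lipschitz domains (and the paper's adaptive experiments are motivated precisely by non-convex geometry), the fix is to treat SSP exactly as you treated CP, i.e.\ to invoke the corner-singularity regularity theory for the fourth-order operator with Navier/variational boundary conditions directly --- which is what the paper's cited references supply for both boundary conditions. A further minor point in the same branch: on a curved boundary the natural condition of the Hessian form $a(\cdot,\cdot)$ on $H^2\cap H^1_0$ is not $\Delta w=0$ but $\partial^2_{\boldsymbol{n}\boldsymbol{n}}w=0$ (equivalently $\Delta w-\kappa\,\partial_{\boldsymbol{n}}w=0$, with $\kappa$ the curvature); the two coincide only on straight edges.
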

\begin{proof}
Such an estimate follows from classical regularity results for the biharmonic problem; we refer the reader to \cite{BRM2AS80,G,savare98} for further details.
\end{proof}

\begin{lemma}
Let $(\lambda,u)$ an eigenpair  of the vibration of the eigenvalue Problem~\ref{WMPr}, then there exist $s>1/2$ and a positive  $C_{\Omega}^{\lambda}$ depending on $\Omega$ and $\lambda$ such that $u\in H^{2+s}(\Omega)$ and
$$
\left\|u\right\|_{2+s,\Omega}
\le C_{\Omega}^{\lambda}\left\|u\right\|_{2,\Omega}\,.
$$  
\label{lem:uStar}
\end{lemma}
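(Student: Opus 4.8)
The plan is to reduce the eigenfunction regularity entirely to the source-problem regularity already established in Lemma~\ref{lem:uZero}, observing that an eigenfunction is nothing but the solution of the source problem whose data is proportional to itself.

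First I would rewrite the eigenvalue equation as a source equation. Since $(\lambda,u)$ is an eigenpair of Problem~\ref{WMPr}, by definition we have $a(u,v)=\lambda b(u,v)=b(\lambda u,v)$ for all $v\in V$. Because $u\in V\subseteq L^2(\Omega)$, the datum $\lambda u$ belongs to $L^2(\Omega)$, and comparing with the variational characterisation \eqref{ContSourGPr} of the solution operator $T$ shows that $u$ is exactly the source solution associated with $f=\lambda u$; that is, $u=T(\lambda u)$. This identification is the only point requiring any care, and it is immediate from uniqueness of the solution guaranteed by Lax--Milgram (Lemma~\ref{Bound_Elli}).

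Second I would apply Lemma~\ref{lem:uZero} with this particular choice $f=\lambda u\in L^2(\Omega)$. The lemma furnishes some $s\in(1/2,1]$ (hence $s>1/2$ as required) and a constant $C_{\Omega}$ such that the source solution—namely $u$ itself—satisfies $u\in H^{2+s}(\Omega)$ together with the bound
\[
\|u\|_{2+s,\Omega}\;\le\;C_{\Omega}\,\|\lambda u\|_{0,\Omega}\;=\;|\lambda|\,C_{\Omega}\,\|u\|_{0,\Omega}.
\]
Finally, using the trivial continuous embedding $H^2(\Omega)\hookrightarrow L^2(\Omega)$, which gives $\|u\|_{0,\Omega}\le\|u\|_{2,\Omega}$, I would conclude
\[
\|u\|_{2+s,\Omega}\;\le\;|\lambda|\,C_{\Omega}\,\|u\|_{2,\Omega},
\]
so the claim holds with $C_{\Omega}^{\lambda}:=|\lambda|\,C_{\Omega}$, which depends on $\Omega$ through $C_\Omega$ and on $\lambda$ through the prefactor $|\lambda|$.

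In honesty, there is no genuine obstacle in this argument: the statement is essentially a corollary of Lemma~\ref{lem:uZero}, obtained by a one-step bootstrap from the self-similar source formulation. The only thing that must be verified is the identification $u=T(\lambda u)$, after which the regularity estimate and the norm comparison are automatic.
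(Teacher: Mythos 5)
Your proposal is correct and follows exactly the paper's argument: the paper proves this lemma in one line by applying Lemma~\ref{lem:uZero} with $f=\lambda u$, which is precisely your identification $u=T(\lambda u)$ followed by the source-regularity bound. Your write-up merely makes explicit the constant $C_{\Omega}^{\lambda}=|\lambda|\,C_{\Omega}$ and the embedding $\|u\|_{0,\Omega}\le\|u\|_{2,\Omega}$, both of which the paper leaves implicit.
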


\begin{proof}
The proof follows from Lemma~\ref{lem:uZero} with $f = \lambda u $.
\end{proof}

\begin{proposition}
The operator $T$ is compact. 
\end{proposition}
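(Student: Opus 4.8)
The plan is to show that $T$ is compact by exploiting the additional regularity established in Lemma~\ref{lem:uZero} together with a compact Sobolev embedding. The key observation is that $T$ factors through a more regular space: for any $f \in V \subseteq L^2(\Omega)$, the source solution $w = Tf$ satisfies $w \in H^{2+s}(\Omega)$ with the bound $\|w\|_{2+s,\Omega} \leq C_{\Omega}\|f\|_{0,\Omega}$ for some $s \in (1/2,1]$. Since $V$ embeds continuously in $L^2(\Omega)$, this means $T$ maps $V$ boundedly into $H^{2+s}(\Omega)$.

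First I would write $T = \iota \circ \widetilde{T}$, where $\widetilde{T}\colon V \to H^{2+s}(\Omega)$ is the bounded lift provided by the regularity estimate, and $\iota\colon H^{2+s}(\Omega) \hookrightarrow V$ is the natural inclusion. The map $\widetilde{T}$ is bounded by Lemma~\ref{lem:uZero} (combined with the continuous embedding $V \hookrightarrow L^2(\Omega)$, which lets us control $\|f\|_{0,\Omega}$ by the norm of $f$ in $V$). The remaining point is that $\iota$ is compact: since $s>1/2$, we have $H^{2+s}(\Omega)$ compactly embedded into $H^2(\Omega)$ by the Rellich--Kondrachov theorem on the bounded Lipschitz domain $\Omega$, and the $H^2(\Omega)$-norm is equivalent to the $V$-norm on $V$ by virtue of the ellipticity~\eqref{ellip_aSSP}--\eqref{ellip_aCP}. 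Composing a bounded operator with a compact one yields a compact operator, which gives the claim.

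In more detail, I would argue as follows. Let $\{f_n\}$ be a bounded sequence in $V$. By the regularity estimate, $\{Tf_n\}$ is bounded in $H^{2+s}(\Omega)$. Because the embedding $H^{2+s}(\Omega) \hookrightarrow H^2(\Omega)$ is compact, there is a subsequence $\{Tf_{n_k}\}$ converging in the $H^2(\Omega)$-norm, hence in the $V$-norm since the two norms coincide up to equivalence on $V$. Thus every bounded sequence has a subsequence whose image under $T$ converges in $V$, which is precisely compactness of $T$.

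The main obstacle, and the only nontrivial ingredient, is ensuring that the compact embedding $H^{2+s}(\Omega) \hookrightarrow H^2(\Omega)$ is available on the domain at hand: this requires $s>0$ (so that the higher-order space is genuinely more regular) and sufficient regularity of $\partial\Omega$ for Rellich--Kondrachov to apply, both of which are guaranteed here since $s \in (1/2,1]$ and $\Omega$ is a bounded Lipschitz domain. Everything else is the standard factorisation argument, and since the same regularity bound holds for $T^0$ by Lemma~\ref{lem:uZero}, the identical reasoning shows $T^0$ is compact as well.
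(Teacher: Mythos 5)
Your proposal is correct and follows exactly the paper's one-line argument: compactness of $T$ from the regularity lift of Lemma~\ref{lem:uZero} composed with the compact embedding $H^{2+s}(\Omega)\hookrightarrow V$; you have merely spelled out the factorisation, the sequential-compactness argument, and the norm equivalence that the paper leaves implicit. No gaps, and no genuinely different route.
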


\begin{proof}
The compact inclusion $H^{2+s}(\Omega) \hookrightarrow V$, together with Lemma~\ref{lem:uZero}, imply the result.
\end{proof}

As consequence, we have the following spectral characterisation for the eigenvalues of the solution operators $T$.

\begin{theorem}
The spectrum of the operators $T$ and $T^0$ satisfy
$$
\spec(T)=\{0 \}\cup \{ \lambda_k\}_{k\in \mathbb{N}}\,=\spec(T^0), 
$$
where $\{ \lambda_k\}_{k\in \mathbb{N}}$ are sequences of real positive eigenvalues 
that converges to 0. 
Moreover, the multiplicity of each eigenvalue is finite.
\end{theorem}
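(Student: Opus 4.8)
The plan is to treat this as a standard consequence of the spectral theorem for compact self-adjoint operators, once the relevant structural properties have been assembled. First I would record that $T^0$, like $T$, is compact: the regularity estimate of Lemma~\ref{lem:uZero} applies verbatim to $w^0$, so $T^0$ maps $\LO$ boundedly into $H^{2+s}(\Omega)$, and the compact embedding $H^{2+s}(\Omega)\hookrightarrow \LO$ yields compactness of $T^0$ as an operator on $\LO$. Combined with Proposition~\ref{prop:adjCont}, both $T$ and $T^0$ are thus compact and self-adjoint on a Hilbert space, with respect to $a(\cdot,\cdot)$ and $b(\cdot,\cdot)$ respectively.

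With these facts in hand, the spectral theorem for compact self-adjoint operators gives, for each operator separately, that the spectrum consists of $\{0\}$ together with an at most countable set of real eigenvalues of finite multiplicity whose only possible accumulation point is $0$. Since both $V$ and $\LO$ are infinite-dimensional, $0$ necessarily belongs to each spectrum. To obtain positivity of the nonzero eigenvalues of $T$, I would take an eigenpair $Tu=\mu u$ with $u\neq 0$ and test the defining identity $a(Tu,v)=b(u,v)$ with $v=u$, which yields $\mu\,a(u,u)=b(u,u)=\|u\|_{0,\Omega}^2$; the ellipticity stated in Lemma~\ref{Bound_Elli} forces $a(u,u)>0$, hence $\mu>0$. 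The identical computation applies to $T^0$, so both sequences of eigenvalues are real, positive, and accumulate only at $0$.

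The genuinely non-automatic step is the identity $\spec(T)=\spec(T^0)$, since the two operators act on different spaces and are self-adjoint for different inner products. The key observation is that $T$ and $T^0$ are defined by the same variational equation and therefore coincide on $V$: for $f\in V\subseteq \LO$, both $Tf$ and $T^0f$ solve $a(\cdot,v)=b(f,v)$ for all $v\in V$, so $Tf=T^0f$. I would then argue that every nonzero eigenvalue is shared. If $T^0u=\mu u$ with $\mu\neq 0$, then $u=\mu^{-1}T^0u\in V$ because $T^0$ takes values in $V$; consequently $Tu=T^0u=\mu u$, so $\mu$ is an eigenvalue of $T$ with the same eigenfunction. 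Conversely, if $Tu=\mu u$ with $\mu\neq 0$, then $u\in V$ and $T^0u=Tu=\mu u$. Hence the nonzero eigenvalues, together with their eigenspaces, are identical, and since $0$ lies in both spectra the two agree. The main obstacle is precisely this bookkeeping across the two functional settings; the elliptic regularity lift $u\in V$ for nonzero eigenvalues is what makes the identification clean.
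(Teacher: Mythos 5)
Your proof is correct and takes essentially the same route as the paper, which states this theorem as a direct consequence of the compactness and self-adjointness established in the preceding propositions together with the classical spectral theory of compact self-adjoint operators (cf.\ \cite{BO}). You in fact supply details the paper leaves implicit---notably the positivity argument via testing with $v=u$ and the identification $\spec(T)=\spec(T^0)$ through the observation that $T$ and $T^0$ agree on $V$ and that eigenfunctions for nonzero eigenvalues lift into $V$---and these are exactly the standard arguments intended.
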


\setcounter{equation}{0}
\section{The virtual spectral approximation}
\label{SEC:DISCRETE}

Let $\left\{\Omega_h\right\}_h$ be a sequence of decompositions of $\O$
into polygons $\E$. Let $h_\E$ denote the diameter of the element $\E$
and  
$h:=\max_{\E\in\Omega_h}h_\E$. Similar to \cite{BBCMMR2013}, we  assume that there exists a positive constant $C_{\Omega}$ such that
\begin{itemize}
	\item[{\bf A1}:] the ratio between the shortest edge
	and the diameter $h_\E$ of $\E$ is larger than $C_{\Omega}$.
	\item[{\bf A2}:] $\E\in\Omega_h$ is star-shaped with
	respect to every point of a  ball
	of radius $C_{\Omega}h_\E$.
\end{itemize}

In what follows,
we denote by $N_K$ the number of vertices of $K$,
$l$ denotes a generic edge of $\Omega_h $
and for all $l\in \partial K$, we define a unit normal vector $\boldsymbol{n}_K^l$ 
that points outside of $\E$. Moreover, for any polygon $K\in \Omega_h$ we denote by $\mathcal{E}_K$ the set of edges of $K$ and $\mathcal{E}:=\cup_{K\in \Omega_h}\mathcal{E}_K$. We also introduce the sets $\mathcal{E}_{\G}:=\{l\in \mathcal{E}:l\subset \G \}$, and  $\mathcal{E}_{\O}:= \mathcal{E}\backslash \mathcal{E}_{\G}$. Finally, given $\mathcal{O}\subseteq \mathbb{R}^d$,
we denote the polynomial space in $d$-variables of degree lower or equal to $k$ as $\mathbb{P}_{k}(\mathcal{O})$.

Next, for all polygon $\E\in \Omega_h$  we consider the local virtual space
\begin{align*}
\widetilde{V}_K
:=\biggl\{v_h\in \HdoK : &\Delta^2 v_h\in\P_{2}(\E),\\
& v_h|_{\partial\E}\in C^0(\partial\E),\,
v_h|_l\in\P_3(l),\forall l\in\partial\E,\\
& \nabla v_h|_{\partial\E}\in C^0(\partial\E)^2,\,
\partial_{\boldsymbol{n}_K^l} v_h|_l\in\P_1(l), \forall l\in\partial\E\biggr\}.
\end{align*}
From here, the enhanced local virtual space is given by:
\begin{align*}
  V_h^K      
:=\left\{v_h\in\widetilde{V}_K : \int_{\E}(\PiK v_h)p_2\ dK=\int_{\E}v_hp_2\ dK,\,\forall p_2\in\P_{2}(\E)\right\},
\end{align*}
where, the polynomial projection operator $\Pi_K^\Delta:\widetilde{V}_K\rightarrow \mathbb{P}_2(K)\subseteq \widetilde{V}_K$ is defined as follows:
\begin{align*}
  \begin{cases}
    \begin{aligned}
      \int_K \nabla^2 (\Pi_{K}^{\Delta} v_h -v_h)\cdot \nabla^2 p_2\dK &= 0, && \forall p_2\in \mathbb{P}_2(K),\\
      \int_{\partial K} (\Pi_{K}^{\Delta} v_h -v_h)p_1 \dl &=0, && \forall p_1\in \mathbb{P}_1(K).
    \end{aligned}
  \end{cases}
\end{align*}
We are ready to present the global virtual space: for every
decomposition $\Omega_h$ of $\O$ into simple polygons $\E$, we define
$$
 V_h  :=\left\{v_h \in  V  :\ v_h|_{\E}\in  V_h^K      \right\}.
$$
It is well known (see \cite{BM13}) that any virtual function $v_h\in V_h^{\E}$ and  $\Pi_{\E}^{\Delta}v_h$ 
are uniquely determined by the following degrees of freedom:
\begin{align*}
	&{\bf D_1}: \text{Evaluation of $v_h$ at the vertexes of $\E$}, \\
	&{\bf D_2}: \text{Evaluation of $\nabla v_h$ at the vertexes of $\E$.}
\end{align*}
Notice that this definition implies that only 3 DoFs are required for this space. In order to write the discrete spectral problem we define 
\begin{align*}
a_h(u_h,v_h)
:=\sum_{\E\in\Omega_h}a_{h,\E}(u_h,v_h),
\qquad 
b_h(u_h,v_h)
:=\sum_{\E\in\Omega_h}b_{h,\E}(u_h,v_h),
\qquad \forall u_h,v_h\in V_h  ,
\end{align*}
where $a_{h,\E}(\cdot,\cdot)$ and $b_{h,\E}(\cdot,\cdot)$
are the local discrete bilinear forms on
$  V_h^K      \times  V_h^K      $  given by
\begin{align}
\begin{aligned}
a_{h,\E}(u_h,v_h)
&:=a_{\E}\big(\PiK u_h,\PiK v_h\big)
+s_{\E}^{\Delta}\big(u_h-\PiK u_h,v_h-\PiK v_h\big),
&&\forall u_h,v_h\in  V_h^K      ,\\
b_{h,\E}(u_h,v_h)\label{locforma2}
&:=b_{\E}\big(\Pi_K^0 u_h,\Pi_K^0 v_h\big) +s_{\E}^0\big(u_h-\Pi_K^0 u_h,v_h-\Pi_K^0 v_h\big),
&&\forall u_h,v_h\in  V_h^K, 
\end{aligned}
\end{align}
with $\Pi_K^0:L^2(K) \to \mathbb{P}_2(K)$ is the standard $L^2-$projection and  $s_{\E}^{\Delta}$ and $s_{\E}^0$ are any symmetric positive bilinear forms satisfying  
\begin{align*}
\begin{aligned}
\alpha_{*}a_{\E}(v_h,v_h)&\leq s_{\E}^{\Delta}(v_h,v_h)\leq  \alpha^{*}a_{\E}(v_h,v_h), 
&&\forall v_h\in V_h^\E\cap\ker(\Pi_{\E}^{\Delta}),\\
\beta_{*}b_{\E}(v_h,v_h)&\leq s_{\E}^{0}(v_h,v_h)\leq \beta^{*}b_{\E}(v_h,v_h),
&&\forall v_h\in V_h^\E\cap\ker(\Pi_{\E}^{0}),
\end{aligned}    
\end{align*}
with $\alpha_{*},\alpha^{*},\beta_{*}$ and $\beta^{*}$ positive constants independent of $h_K$.
\begin{remark}
    The polynomial projections $\PiK v_h$ and $\Pi_K^0 v_h$  coincide in the lowest order case, for all $v_h\in V_h^K$(see \cite{MRV2018}).
\end{remark}
\begin{lemma}\label{consandstability}
    The following relations hold:
\begin{itemize}
\item \textit{Consistency}: for all $h > 0$ and for all $\E\in\Omega_h$, we have that
\begin{align*}
\begin{aligned}
a_{h,\E}(p_2,v_h)
&=a_{\E}(p_2,v_h),
&&\forall p_2\in\P_2(\E),
\quad\forall v_h\in  V_h^K,\\ 
b_{h,\E}(p_2,v_h)
&=b_{\E}(p_2,v_h),
&&\forall p_2\in\P_2(\E),
\quad\forall v_h\in  V_h^K.      
\end{aligned}
\end{align*}
\item \textit{Stability and
boundedness}: The following estimates hold
\begin{align*}
\begin{aligned}
\min\{1, \alpha_{*}\}a_{\E}(v_h,v_h)&\leq a_{\E,h}(v_h,v_h)\leq \max\{1,\alpha^{*}\}a_{\E}(v_h,v_h), 
&&\forall v_h\in V_h(\E)\cap\ker(\Pi_{\E}^{\Delta}),\\
\min\{1, \beta_{*}\}b_{\E}(v_h,v_h)&\leq b_{\E,h}(v_h,v_h)\leq \max\{1,\beta^{*}\}b_{\E}(v_h,v_h),
&&\forall v_h\in V_h(\E)\cap\ker(\Pi_{\E}^{0}).
\end{aligned}    
\end{align*}
\end{itemize}
\end{lemma}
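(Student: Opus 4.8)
The plan is to handle the two assertions separately, in each case exploiting the defining orthogonality of the projectors $\PiK$ and $\Pi_K^0$ together with the spectral-equivalence hypotheses imposed on the stabilisations $s_{\E}^{\Delta}$ and $s_{\E}^0$. For the \emph{consistency} identities I would first recall that both projectors are idempotent and reproduce quadratics, so that $\PiK p_2 = p_2$ and $\Pi_K^0 p_2 = p_2$ for every $p_2\in\P_2(\E)$. Placing $p_2$ in the first slot of $a_{h,\E}$ therefore kills the stabilisation contribution, because $p_2-\PiK p_2=0$, and leaves $a_{h,\E}(p_2,v_h)=a_{\E}(p_2,\PiK v_h)$. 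To finish I would invoke the first defining relation of $\PiK$, which reads $a_{\E}(\PiK v_h-v_h,\,p_2)=0$ for all $p_2\in\P_2(\E)$; by symmetry of $a_{\E}$ this is precisely $a_{\E}(p_2,\PiK v_h)=a_{\E}(p_2,v_h)$, giving the claim. The identity for $b_{h,\E}$ follows the same template, with the $L^2$-orthogonality $\int_{\E}(\Pi_K^0 v_h-v_h)p_2\,\dK=0$ replacing the defining relation.

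For \emph{stability and boundedness} the key algebraic fact is the $a_{\E}$-orthogonal splitting $v_h=\PiK v_h+(v_h-\PiK v_h)$. Testing the defining equation of $\PiK$ with $p_2=\PiK v_h\in\P_2(\E)$ yields $a_{\E}(v_h-\PiK v_h,\,\PiK v_h)=0$, and hence the Pythagorean identity
\begin{equation*}
a_{\E}(v_h,v_h)=a_{\E}(\PiK v_h,\PiK v_h)+a_{\E}(v_h-\PiK v_h,\,v_h-\PiK v_h).
\end{equation*}
Because $\PiK$ is a projection, $v_h-\PiK v_h\in\ker(\PiK)$, so the stabilisation hypothesis applies to this term and gives $\alpha_*\,a_{\E}(v_h-\PiK v_h,v_h-\PiK v_h)\le s_{\E}^{\Delta}(v_h-\PiK v_h,v_h-\PiK v_h)\le\alpha^*\,a_{\E}(v_h-\PiK v_h,v_h-\PiK v_h)$. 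Substituting these into $a_{h,\E}(v_h,v_h)=a_{\E}(\PiK v_h,\PiK v_h)+s_{\E}^{\Delta}(v_h-\PiK v_h,v_h-\PiK v_h)$ and comparing term by term against the Pythagorean identity produces the factors $\min\{1,\alpha_*\}$ and $\max\{1,\alpha^*\}$. The bounds for $b_{h,\E}$ are obtained verbatim, replacing $a_{\E}$ by $b_{\E}$, $\PiK$ by $\Pi_K^0$, and the stabilisation constants accordingly.

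I do not anticipate a genuine obstacle, since this is the standard VEM consistency/stability pair; the only point deserving care is verifying that $v_h-\PiK v_h$ (respectively $v_h-\Pi_K^0 v_h$) truly lies in the kernel of the corresponding projector, so that the spectral-equivalence hypotheses are legitimately invoked. This rests solely on the idempotence and well-posedness of $\PiK$ and $\Pi_K^0$, both guaranteed by the degrees of freedom ${\bf D_1}$ and ${\bf D_2}$.
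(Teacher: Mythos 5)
Your proof is correct. The paper states this lemma without any proof (it is the standard VEM consistency/stability result), and your argument is precisely the canonical one: consistency from $\PiK p_2=p_2$, $\Pi_K^0p_2=p_2$ and the defining orthogonalities, stability from the Pythagorean splitting. One remark: the lemma as stated only claims the stability bounds for $v_h\in V_h^{\E}\cap\ker(\Pi_{\E}^{\Delta})$ (resp.\ $\ker(\Pi_{\E}^{0})$), where $\PiK v_h=0$ gives $a_{h,\E}(v_h,v_h)=s_{\E}^{\Delta}(v_h,v_h)$ and the bounds are immediate from the spectral-equivalence hypothesis; your Pythagorean argument instead proves the stronger version valid for all $v_h\in V_h^{\E}$, and it is legitimately carried out, since $\P_2(\E)\subseteq V_h^{\E}$ and idempotence of the projector give $v_h-\PiK v_h\in V_h^{\E}\cap\ker(\Pi_{\E}^{\Delta})$, exactly the point you flagged as needing care.
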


\begin{lemma}\label{Disc_BoundAndEllip}
The following estimates hold
\begin{align*}
\begin{aligned}
a_{h}(v_{h},v_{h})& \geq  \alpha_{\Omega}\min\{1,\alpha_{*} \}  ||v_{h}||_{2,\Omega}^2,
&&\forall v_{h}\in  V_{h},\\	
|a_{h}(u_{h},v_{h})|& \lesssim \max\{1,\alpha^{*}\} ||u_{h}||_{2,\Omega}||v_{h}||_{2,\Omega},
&&\forall u_{h},v_{h}\in V_{h},\\
|b(u_{h},v_{h})|& \lesssim \max\{1,\beta^{*}\}||u_{h}||_{0,\Omega}||v_{h}||_{0,\Omega},
&&\forall u_{h},v_{h}\in V_{h}.
\end{aligned}
\end{align*}
\label{Bound_Elli_h}
\end{lemma}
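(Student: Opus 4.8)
The plan is to reduce all three bounds to element-by-element estimates and then assemble them by summing over $\E\in\Omega_h$, exploiting the conformity $V_h\subseteq V$ so that the continuous bounds of Lemma~\ref{Bound_Elli} apply to discrete functions. The two structural facts I would use at the element level are: (i) the projector $\PiK$ is $a_\E$-orthogonal onto $\mathbb{P}_2(\E)$ --- this is exactly the first defining equation of $\PiK$ --- so that the Pythagorean identity $a_\E(v_h,v_h)=a_\E(\PiK v_h,\PiK v_h)+a_\E(v_h-\PiK v_h,v_h-\PiK v_h)$ holds, and analogously $\Pi_K^0$ is $b_\E$-orthogonal in $L^2(\E)$; and (ii) the prescribed bounds $\alpha_* a_\E\le s_\E^\Delta\le\alpha^* a_\E$ on $\ker(\PiK)$ and $\beta_* b_\E\le s_\E^0\le\beta^* b_\E$ on $\ker(\Pi_K^0)$.

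For the ellipticity of $a_h$, I would decompose each $v_h\in V_h$ on every element as $v_h=\PiK v_h+(v_h-\PiK v_h)$, noting that $v_h-\PiK v_h\in\ker(\PiK)$, and apply the lower stabilisation bound together with the Pythagorean identity to obtain $a_{h,\E}(v_h,v_h)\ge\min\{1,\alpha_*\}\,a_\E(v_h,v_h)$ on each $\E$. Summing over the mesh yields $a_h(v_h,v_h)\ge\min\{1,\alpha_*\}\,a(v_h,v_h)$, and since $V_h\subseteq V$ the continuous coercivity \eqref{ellip_aSSP}--\eqref{ellip_aCP} gives $a(v_h,v_h)\ge\alpha_\Omega\|v_h\|_{2,\Omega}^2$, completing the first estimate. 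This is essentially the local stability half of Lemma~\ref{consandstability} summed and combined with Lemma~\ref{Bound_Elli}.

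For the continuity of $a_h$, I would bound each local form by Cauchy--Schwarz applied separately to the polynomial part $a_\E(\PiK u_h,\PiK v_h)$ and to the stabilisation part, using the upper bound $s_\E^\Delta\le\alpha^* a_\E$ together with the fact that the symmetric positive form $s_\E^\Delta$ admits its own Cauchy--Schwarz inequality. Writing the two contributions as the Euclidean inner product of the vectors $\bigl(a_\E(\PiK u_h,\PiK u_h)^{1/2},\,a_\E(u_h-\PiK u_h,u_h-\PiK u_h)^{1/2}\bigr)$ and their $v_h$-analogue, a further Cauchy--Schwarz in $\mathbb{R}^2$ and the Pythagorean identity give $|a_{h,\E}(u_h,v_h)|\le\max\{1,\alpha^*\}\,a_\E(u_h,u_h)^{1/2}a_\E(v_h,v_h)^{1/2}$. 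Summing, applying the discrete Cauchy--Schwarz over the elements, and invoking the continuous boundedness \eqref{bound_a} yields the second estimate. The third follows verbatim with $\PiK,a_\E,s_\E^\Delta,\alpha^*$ replaced by $\Pi_K^0,b_\E,s_\E^0,\beta^*$ and \eqref{bound_b0} in place of \eqref{bound_a}, the left-hand side being understood as the discrete form $b_h$.

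The computations are routine; the only point requiring care is that the stabilisation hypotheses act on $\ker(\PiK)$ and $\ker(\Pi_K^0)$, whereas the claimed estimates concern all of $V_h^\E$. The orthogonal-projection decomposition above is precisely what bridges this gap: it isolates the kernel component, on which the stabilisation bounds are valid, and leaves the polynomial component to the consistency identities of Lemma~\ref{consandstability}, so that no assumption on $s_\E^\Delta$ or $s_\E^0$ outside the projector kernels is ever needed.
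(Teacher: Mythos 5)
Your proposal is correct, and it is precisely the standard argument the paper itself leaves implicit: Lemma~\ref{Disc_BoundAndEllip} is stated without proof, as a direct consequence of summing the stability estimates of Lemma~\ref{consandstability} over the elements and invoking the continuous bounds of Lemma~\ref{Bound_Elli} through the conformity $V_h\subseteq V$. Your Pythagorean decomposition $v_h=\PiK v_h+(v_h-\PiK v_h)$ supplies exactly the step needed to pass from the stabilisation hypotheses on $\ker(\PiK)$ and $\ker(\Pi_K^0)$ to all of $V_h^K$ (a point the paper glosses over, since Lemma~\ref{consandstability} is stated only on the kernels), and your reading of the third estimate as concerning the discrete form $b_h$ rather than $b$ is the evidently intended interpretation of the statement.
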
 
\subsection{Discrete spectral formulation}

The virtual element discretization of problem~\eqref{MPr} can be read as follows.
\begin{problem}\label{DiscVP}
Find $(\lambda_h,u_h)\in \mathbb{R}\times V_h$, with $u_h\neq 0$ such that
\begin{equation*}
    a(u_h,v_h)=\lambda_h b_h(u_h,v_h)\qquad \forall v_h\in V_h.
\end{equation*}
\end{problem} 
\noindent Following Section~\ref{sec:contSolOp}, we introduce the discrete solution operator
\begin{equation*}
T_h:V_h \rightarrow  V_h\subseteq  V, \quad
 f_h \longmapsto w_h,
\end{equation*}
where $w_h:=T_hf_h$ is the unique solution of the  source problem
\begin{equation}\label{DiscreteSourceProblem}
a_{h} (w_{h},v_{h})=b_{h}(f_h,v_{h}) \qquad\forall v_{h}\in V_h\,.
\end{equation}
Since $f_h\in V_h$, 
the term $b_{h}( f_h,v_{h})$ is  computable from the degrees of freedom ${\bf D_1}$ and ${\bf D_2}$ for all $f_h\in V_h$.
Moreover, such discrete solution operators ``inherit'' some good properties of their continuous counterparts.

\begin{proposition}
The operator $T_h$ is well defined and uniformly bounded.
\end{proposition}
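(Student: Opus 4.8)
The plan is to show that $T_h$ is well defined and uniformly bounded by appealing directly to the discrete counterpart of the Lax--Milgram argument used for the continuous operators, using the discrete ellipticity and boundedness already established in Lemma~\ref{Bound_Elli_h}.

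First I would address well-posedness. Fix $f_h\in V_h$. The source problem \eqref{DiscreteSourceProblem} seeks $w_h\in V_h$ with $a_h(w_h,v_h)=b_h(f_h,v_h)$ for all $v_h\in V_h$. Since $V_h$ is a finite-dimensional subspace of $V$, it is a Hilbert space under the $H^2(\Omega)$-norm. The right-hand side $v_h\mapsto b_h(f_h,v_h)$ is a bounded linear functional on $V_h$: by the boundedness of $b_h$ from Lemma~\ref{Bound_Elli_h} and the continuous embedding $H^2(\Omega)\hookrightarrow L^2(\Omega)$, one has $|b_h(f_h,v_h)|\lesssim \|f_h\|_{0,\Omega}\|v_h\|_{0,\Omega}\lesssim \|f_h\|_{2,\Omega}\|v_h\|_{2,\Omega}$. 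The bilinear form $a_h$ is bounded and, crucially, elliptic on all of $V_h$ with constant $\alpha_\Omega\min\{1,\alpha_*\}>0$, again by Lemma~\ref{Bound_Elli_h}. Therefore the hypotheses of the Lax--Milgram theorem are met, and \eqref{DiscreteSourceProblem} admits a unique solution $w_h\in V_h$, so $T_h$ is well defined.

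Next I would extract the uniform bound. Testing \eqref{DiscreteSourceProblem} with $v_h=w_h$ and using discrete ellipticity gives
\begin{align*}
\alpha_\Omega\min\{1,\alpha_*\}\,\|w_h\|_{2,\Omega}^2
\leq a_h(w_h,w_h)
= b_h(f_h,w_h)
\lesssim \|f_h\|_{0,\Omega}\|w_h\|_{0,\Omega}.
\end{align*}
Applying the embedding $\|w_h\|_{0,\Omega}\lesssim \|w_h\|_{2,\Omega}$ and dividing by $\|w_h\|_{2,\Omega}$ yields $\|T_hf_h\|_{2,\Omega}=\|w_h\|_{2,\Omega}\lesssim \|f_h\|_{0,\Omega}\lesssim\|f_h\|_{2,\Omega}$, where every constant depends only on $\alpha_\Omega,\alpha_*,\beta^*$ and the embedding constant, and is therefore independent of $h$. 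This establishes uniform boundedness of $\{T_h\}_h$.

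The point requiring the most care, and the reason the statement is not entirely trivial, is the \emph{uniformity} of the bound in $h$. This hinges on the fact that the constants in Lemma~\ref{Bound_Elli_h} --- namely $\alpha_\Omega$, $\alpha_*$, $\alpha^*$, $\beta^*$ --- are themselves independent of the mesh parameter, which traces back to the mesh regularity assumptions \textbf{A1}--\textbf{A2} and the stabilisation bounds defining $s_\E^\Delta$ and $s_\E^0$. Provided one invokes those estimates as stated (rather than element-dependent versions of them), the two displayed chains give an $h$-uniform operator bound and no further obstacle arises.
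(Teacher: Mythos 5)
Your proposal is correct and follows exactly the route the paper takes: the paper's proof is a one-line appeal to Lemma~\ref{Disc_BoundAndEllip} (discrete ellipticity and boundedness of $a_h$ and $b_h$) combined with the Lax--Milgram theorem, which is precisely your argument spelled out in detail. Your additional observation that the uniformity in $h$ rests on the $h$-independence of $\alpha_\Omega$, $\alpha_*$, $\alpha^*$, $\beta^*$ is a faithful elaboration of what the paper leaves implicit, not a different approach.
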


\begin{proof}
The proof follows from  Lemma~\ref{Disc_BoundAndEllip} and the Lax-Milgram theorem.
\end{proof}

\begin{proposition}
Given $(\lambda_h,u_h)\in\R\times V_h$, the following statements are equivalent:
\begin{itemize}
\item $(\lambda_h,u_h)\in\R\times V_h$ is a eigenpair of Problem~\ref{DiscVP} with $u_h\neq 0$ and $\lambda_h\neq 0$;
\item $(\mu_h, u_h)\in\R\times V_h$ is a eigenpair of $T_h$, where $u_h\neq 0$ and $\mu_h:=1/\lambda_h.$
\end{itemize}
\end{proposition}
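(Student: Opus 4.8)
The plan is to prove the equivalence between eigenpairs of Problem~\ref{DiscVP} and eigenpairs of the discrete solution operator $T_h$, mirroring exactly the argument used at the continuous level. The forward direction assumes $(\lambda_h,u_h)$ solves Problem~\ref{DiscVP} with $u_h\neq 0$ and $\lambda_h\neq 0$; I must then exhibit that $T_hu_h=\mu_h u_h$ with $\mu_h=1/\lambda_h$. The converse assumes $T_hu_h=\mu_h u_h$ with $u_h\neq 0$ and recovers a solution of Problem~\ref{DiscVP}.

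First I would start from the definition of $T_h$ in \eqref{DiscreteSourceProblem}. For the forward direction, since $(\lambda_h,u_h)$ solves Problem~\ref{DiscVP}, we have $a(u_h,v_h)=\lambda_h b_h(u_h,v_h)$ for all $v_h\in V_h$. Here I note that $a$ and $a_h$ agree on $V_h$ in the sense that the discrete source problem uses $a_h$; to keep the argument clean I would read the discrete eigenproblem using the same discrete bilinear form $a_h$ consistent with \eqref{DiscreteSourceProblem}. Applying the operator $T_h$ to $u_h$ means solving $a_h(T_hu_h,v_h)=b_h(u_h,v_h)$ for all $v_h\in V_h$. Comparing with the rearranged eigenrelation $a_h(u_h,v_h)=\lambda_h b_h(u_h,v_h)$, i.e. $b_h(u_h,v_h)=\frac{1}{\lambda_h}a_h(u_h,v_h)=a_h(\mu_h u_h,v_h)$, I would conclude that $T_hu_h$ and $\mu_h u_h$ solve the same variational problem with the same right-hand side. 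Uniqueness of the solution to \eqref{DiscreteSourceProblem}, which follows from the ellipticity in Lemma~\ref{Disc_BoundAndEllip} and Lax--Milgram, then forces $T_hu_h=\mu_h u_h$, so $(\mu_h,u_h)$ is an eigenpair of $T_h$.

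For the converse, I would reverse these implications: if $T_hu_h=\mu_h u_h$ with $u_h\neq 0$, then first observe $\mu_h\neq 0$, since $\mu_h=0$ together with the definition of $T_h$ would force $b_h(u_h,v_h)=0$ for all $v_h$ and hence $u_h=0$ by the positivity of $b_h$, contradicting $u_h\neq 0$. With $\mu_h\neq 0$ in hand, substituting $T_hu_h=\mu_h u_h$ into \eqref{DiscreteSourceProblem} gives $\mu_h a_h(u_h,v_h)=b_h(u_h,v_h)$, which rearranges to $a_h(u_h,v_h)=\frac{1}{\mu_h}b_h(u_h,v_h)$; setting $\lambda_h=1/\mu_h$ recovers Problem~\ref{DiscVP}.

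I expect the main obstacle to be bookkeeping rather than conceptual: one must be careful that the bilinear form appearing in the discrete eigenproblem is consistent with the one defining $T_h$ (the statement of Problem~\ref{DiscVP} uses $a(\cdot,\cdot)$ on $V_h$ while $T_h$ is built with $a_h(\cdot,\cdot)$, so I would either note the convention that these coincide on the discrete space or work uniformly with $a_h$). The only genuine analytic input is the well-posedness of the discrete source problem and the nonvanishing of $\mu_h$, both of which follow directly from Lemma~\ref{Disc_BoundAndEllip}; everything else is the same algebraic manipulation already carried out in the continuous Proposition, so the proof is essentially a transcription of that argument into the discrete setting.
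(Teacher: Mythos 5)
Your proof is correct and is precisely the standard spectral-theory argument that the paper itself omits (it simply cites the literature with ``the proof is standard''): apply the definition of $T_h$ in \eqref{DiscreteSourceProblem}, rearrange the eigenrelation using $\lambda_h\neq 0$, and invoke uniqueness from the discrete ellipticity in Lemma~\ref{Disc_BoundAndEllip}, with the converse using positive-definiteness of $b_h$ to rule out $\mu_h=0$. You also correctly flag and resolve the notational slip in Problem~\ref{DiscVP}, where $a(\cdot,\cdot)$ should read $a_h(\cdot,\cdot)$ to be consistent with the operator $T_h$; working uniformly with $a_h$ is the right fix.
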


\begin{proof}
Since the proof is standard in spectral theory, it is omitted; we refer the reader to~\cite{SunZhou2016} for more details.
\end{proof}

\begin{proposition}
$T_h$ is a self-adjoint operator with respect to the inner product $a_h(\cdot,\cdot)$. 
\end{proposition}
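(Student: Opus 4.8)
The plan is to mimic the argument used for the continuous operator $T$ in Proposition~\ref{prop:adjCont}, replacing the exact forms by their discrete counterparts. Proving that $T_h$ is self-adjoint with respect to $a_h(\cdot,\cdot)$ amounts to establishing
\[
a_h(T_h f_h, g_h) = a_h(f_h, T_h g_h) \qquad \forall f_h, g_h \in V_h.
\]
First I would set $w_h := T_h f_h$ and $z_h := T_h g_h$, so that, by the definition of the discrete source problem~\eqref{DiscreteSourceProblem}, these satisfy $a_h(w_h, v_h) = b_h(f_h, v_h)$ and $a_h(z_h, v_h) = b_h(g_h, v_h)$ for every $v_h \in V_h$.

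The core of the proof is then the chain of identities
\[
a_h(T_h f_h, g_h) = b_h(f_h, g_h) = b_h(g_h, f_h) = a_h(T_h g_h, f_h) = a_h(f_h, T_h g_h).
\]
The first equality follows by testing the source problem for $w_h$ with $v_h = g_h$, and the third is obtained analogously by testing the source problem for $z_h$ with $v_h = f_h$. The two remaining equalities are precisely the symmetries of $b_h(\cdot,\cdot)$ and $a_h(\cdot,\cdot)$, respectively, so that the chain closes and yields the claim at once.

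It therefore only remains to verify the symmetry of the two discrete bilinear forms, which follows directly from their construction in~\eqref{locforma2}: each local form is the sum of a polynomial-projection term built on the symmetric forms $a_{\E}(\cdot,\cdot)$ and $b_{\E}(\cdot,\cdot)$ together with a stabilisation term $s_{\E}^{\Delta}(\cdot,\cdot)$ or $s_{\E}^{0}(\cdot,\cdot)$, the latter assumed symmetric by hypothesis. Summing the symmetric local contributions over all $\E \in \Omega_h$ preserves symmetry, so $a_h$ and $b_h$ are symmetric. I do not expect any genuine obstacle here: the argument is structurally identical to the continuous case, and the only point requiring care is to invoke the symmetry of the stabilisations and of the underlying projections, after which the identity chain closes immediately.
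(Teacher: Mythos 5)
Your proof is correct and follows exactly the route the paper intends: its own proof simply says "the arguments used in Proposition~\ref{prop:adjCont} imply the result," and your chain $a_h(T_hf_h,g_h)=b_h(f_h,g_h)=b_h(g_h,f_h)=a_h(T_hg_h,f_h)=a_h(f_h,T_hg_h)$ is precisely the discrete analogue of that continuous argument. Your extra remark verifying the symmetry of $a_h$ and $b_h$ from the symmetric projection and stabilisation terms in~\eqref{locforma2} is a legitimate (and slightly more explicit) completion of the same approach, not a different one.
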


\begin{proof}
The arguments used in Proposition~\ref{prop:adjCont} imply the result.
\end{proof}

Finally, the spectral characterization for operator $T_h$ is summarised in the following result.

\begin{theorem}
Let $m_h$ be the dimension of the discrete space $\Vh$, 
then the spectrum of $T_h$ consists of $m_h$ real and positive eigenvalues repeated according to their multiplicities. 
\end{theorem}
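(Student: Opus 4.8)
The plan is to establish the three claimed properties of $\spec(T_h)$—finiteness, reality, and positivity of the eigenvalues—by exploiting the self-adjointness of $T_h$ together with the positive-definiteness of the bilinear forms, working entirely in the finite-dimensional setting. First I would observe that since $T_h:V_h\to V_h$ is a linear operator on the finite-dimensional space $V_h$ of dimension $m_h$, its spectrum coincides with the set of eigenvalues of the associated matrix, so $\spec(T_h)$ consists of at most $m_h$ complex eigenvalues counted with algebraic multiplicity. This immediately gives the cardinality statement, provided we can rule out any degeneracy and show all eigenvalues are genuine (nonzero) eigenvalues rather than spurious zeros.

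Next I would use the self-adjointness of $T_h$ with respect to the inner product $a_h(\cdot,\cdot)$, which is an inner product precisely because $a_h$ is symmetric and elliptic by Lemma~\ref{Bound_Elli_h} (the first estimate gives $a_h(v_h,v_h)\geq \alpha_{\Omega}\min\{1,\alpha_*\}\|v_h\|_{2,\Omega}^2$, hence positive-definiteness on $V_h$). A self-adjoint operator on a finite-dimensional inner-product space is diagonalisable with real eigenvalues and an $a_h$-orthonormal basis of eigenvectors; this yields both the reality of all eigenvalues and the fact that the algebraic and geometric multiplicities agree, so that counting with multiplicity gives exactly $m_h$ real eigenvalues.

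To obtain positivity, I would take an eigenpair $(\mu_h,u_h)$ with $u_h\neq 0$ and test the definition $a_h(T_h u_h,v_h)=b_h(u_h,v_h)$ with $v_h=u_h$. Writing $T_h u_h=\mu_h u_h$ gives
\begin{equation*}
\mu_h\, a_h(u_h,u_h)=b_h(u_h,u_h).
\end{equation*}
Since $a_h(u_h,u_h)>0$ by ellipticity and $b_h(u_h,u_h)>0$ by the stability bound for $b_h$ in Lemma~\ref{consandstability} (which guarantees $b_h$ is positive-definite on $V_h$), we conclude $\mu_h=b_h(u_h,u_h)/a_h(u_h,u_h)>0$. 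In particular no eigenvalue vanishes, so all $m_h$ eigenvalues are strictly positive, completing the proof.

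The main obstacle, and the point deserving the most care, is verifying that $b_h(\cdot,\cdot)$ is genuinely positive-definite (not merely positive-semidefinite) on all of $V_h$, since this is what forces $\mu_h\neq 0$ and hence rules out $0\in\spec(T_h)$ in the discrete setting. This requires checking that the stabilisation term $s_{\E}^0$ together with the consistency part $b_{\E}(\Pi_K^0 u_h,\Pi_K^0 u_h)$ cannot simultaneously vanish for a nonzero $u_h$; this follows from the lower stability bound $\min\{1,\beta_*\}b_{\E}(v_h,v_h)\leq b_{\E,h}(v_h,v_h)$ combined with the fact that $b_{\E}(v_h,v_h)=\|v_h\|_{0,\E}^2$ is itself a norm, but it is worth stating explicitly since it is the crux distinguishing this discrete result from its continuous analogue, where $T$ accumulates the eigenvalue $0$.
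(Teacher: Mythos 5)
Your proof is correct, and it supplies exactly the standard argument that the paper leaves implicit: the theorem is stated there without proof, resting on classical spectral theory for operators on finite-dimensional spaces, and your three steps (dimension count, self-adjointness of $T_h$ with respect to the $a_h$-inner product giving $m_h$ real eigenvalues with matching algebraic and geometric multiplicities, and positivity via $\mu_h=b_h(u_h,u_h)/a_h(u_h,u_h)>0$, which also excludes $0\in\spec(T_h)$ in contrast with the continuous operator) are precisely that argument. Your attention to the positive-definiteness of $b_h$ on all of $V_h$ is well placed: note that Lemma~\ref{consandstability} as printed restricts the stability bounds to $\ker(\Pi_{\E}^{0})$, but the $L^2$-orthogonality of $\Pi_K^0$ gives $b_K(v_h,v_h)=b_K(\Pi_K^0 v_h,\Pi_K^0 v_h)+b_K(v_h-\Pi_K^0 v_h,v_h-\Pi_K^0 v_h)$, from which the two-sided equivalence $\min\{1,\beta_*\}\,b_K(v_h,v_h)\leq b_{h,K}(v_h,v_h)$ holds for every $v_h\in V_h^K$, exactly as you use it.
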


\subsection{A priori error analysis}
This section focuses on presenting a measured error estimation result in the $L^2$-norm for the eigenfunctions of the vibration problem of a thin plate. This result will be important for the a posteriori error analysis in the following section.

We start  by recalling some well-known interpolation theorems commonly used in the literature on virtual element methods.

\begin{proposition}\label{app1}
If the assumption {\bf A2} is satisfied, then there exists a constant
$C>0$, such that for every $v\in H^{2+s}(\E)$ with $s\in(1/2,1]$,
there exists $v_{\pi}\in\P_{2}(\E)$ such that
$$\vert v-v_{\pi}\vert_{\ell,\E}\le Ch_{\E}^{2+s-\ell}\vert v\vert_{2+s,\E}, \quad \ell=0,1,2.$$
\end{proposition}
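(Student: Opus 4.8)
The plan is to construct $v_\pi$ as the averaged Taylor polynomial of $v$ of degree $2$ over a ball contained in $\E$, and then to invoke Dupont--Scott polynomial approximation theory, reducing the fractional-order estimate to the two integer endpoints $s=0$ and $s=1$ by interpolation.

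First I would fix, using assumption {\bf A2}, a ball $B\subset\E$ of radius $\rho\ge C_\Omega h_\E$ with respect to which $\E$ is star-shaped, and define $v_\pi:=Q^2 v\in\P_2(\E)$, where $Q^2$ denotes the averaged Taylor projection associated with $B$. The two properties I need from this construction are that $Q^2$ reproduces polynomials, $Q^2 p=p$ for all $p\in\P_2(\E)$, and that $Q^2$ is bounded from $H^m(\E)$ into $H^\ell(\E)$ for the relevant orders $m\in\{2,3\}$ and $\ell\in\{0,1,2\}$. Crucially, the single polynomial $v_\pi$ must serve for all three values of $\ell$ simultaneously, which the Taylor-projection construction guarantees since it does not depend on $\ell$.

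Next I would establish the two integer endpoints. Because $I-Q^2$ annihilates $\P_2\supset\P_1$, the classical Bramble--Hilbert lemma applied on the reference configuration of unit diameter yields $\vert v-Q^2 v\vert_{\ell,\E}\lesssim h_\E^{\,2-\ell}\vert v\vert_{2,\E}$ and $\vert v-Q^2 v\vert_{\ell,\E}\lesssim h_\E^{\,3-\ell}\vert v\vert_{3,\E}$ for $\ell=0,1,2$, where the powers of $h_\E$ follow from the scaling $\vert w\vert_{m,\E}=h_\E^{\,d/2-m}\vert\widehat w\vert_{m,\widehat\E}$, valid for every real order $m\ge0$. Here it is essential that the chunkiness ratio $h_\E/\rho$ stay bounded, which is exactly what {\bf A2} provides, so that the hidden constants are independent of $\E$. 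The fractional estimate then follows by interpolating the bounded operator $I-Q^2$ between the endpoint spaces: since $[H^2(\E),H^3(\E)]_{s}=H^{2+s}(\E)$ and the endpoint operator norms are controlled by $h_\E^{2-\ell}$ and $h_\E^{3-\ell}$, the interpolated norm is bounded by $h_\E^{(2-\ell)(1-s)}h_\E^{(3-\ell)s}=h_\E^{\,2+s-\ell}$, which is precisely the claimed estimate.

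The main obstacle is the fractional order $s\in(1/2,1]$: unlike the integer case, $\vert v\vert_{2+s,\E}$ is a nonlocal Gagliardo seminorm, so I must ensure both that its scaling under dilation produces the exponent $h_\E^{\,d/2-(2+s)}$ and that the interpolation identity, together with the $h$-dependence of the interpolation constant, holds uniformly in the element shape. Either the interpolation route above or a direct fractional Bramble--Hilbert argument in the spirit of Dupont and Scott closes this gap; in both cases the uniformity across the mesh family rests entirely on the lower bound for the star-shapedness radius supplied by {\bf A2}.
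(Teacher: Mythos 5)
Your proposal is correct, but note that the paper itself offers no proof of this proposition: it is stated as a recalled, well-known approximation result from the VEM literature, whose standard justification is exactly the route you take (averaged Taylor polynomial over the ball from \textbf{A2}, Bramble--Hilbert at the integer endpoints $m=2,3$, and real interpolation or a direct Dupont--Scott fractional argument for $s\in(1/2,1)$). You also correctly identify the one genuine technical subtlety --- uniformity in the element of the scaling of the Gagliardo seminorm and of the interpolation constants, which rests on the uniform chunkiness bound from \textbf{A2} --- so your sketch supplies precisely the argument the paper implicitly relies on.
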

\begin{proposition}\label{app2}
Assume {\textbf{A1}--\textbf{A2}} are satisfied,
let $v\in\HdsO$ with $s\in(1/2,1]$. Then, there exist $v_{I}\in V_h  $
and $C>0$ such that
$$\Vert v-v_{I}\Vert_{2,\O}\le Ch^s\vert v\vert_{2+s,\O}.$$
\end{proposition}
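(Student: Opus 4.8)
The plan is to construct $v_I$ element by element from the degrees of freedom and then exploit the $C^1$-conformity of the space to glue the local pieces into a single function lying in $V_h \subseteq V$. On each $K \in \Omega_h$ the degrees of freedom $\mathbf{D_1}$ and $\mathbf{D_2}$ are pointwise evaluations of $v$ and of $\nabla v$ at the vertices of $K$; since $s > 1/2$ guarantees the Sobolev embedding $H^{2+s}(K) \hookrightarrow C^1(\overline{K})$ in both $d=2$ and $d=3$, these quantities are well defined for any $v \in \HdsO$. First I would let $v_I|_K \in V_h^K$ be the unique virtual function sharing the values of $\mathbf{D_1}$–$\mathbf{D_2}$ with $v$; because neighbouring elements share the vertex data on their common edges/faces, the resulting $v_I$ is globally $C^1$ and belongs to $V_h$. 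It then suffices to prove the local estimate $\|v - v_I\|_{2,K} \le C h_K^{s}\,|v|_{2+s,K}$ and to sum the squares over $K \in \Omega_h$, using $h_K \le h$ to factor out $h^{2s}$.

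For the local estimate I would split off the polynomial projection of Proposition~\ref{app1}. Let $v_\pi \in \mathbb{P}_2(K)$ satisfy $|v - v_\pi|_{\ell,K} \le C h_K^{2+s-\ell}|v|_{2+s,K}$ for $\ell = 0,1,2$; summing these contributions and using $h_K \lesssim 1$ yields $\|v - v_\pi\|_{2,K} \le C h_K^{s}\,|v|_{2+s,K}$. Writing $v - v_I = (v - v_\pi) + (v_\pi - v_I)$ reduces matters to the second term. Since $\mathbb{P}_2(K) \subseteq V_h^K$ (every quadratic satisfies the defining conditions of $\widetilde V_K$, and the enhancement constraint holds trivially because $\Pi_K^{\Delta}$ reproduces $\mathbb{P}_2(K)$) and the local interpolant reproduces $\mathbb{P}_2(K)$, we have $v_\pi - v_I = I_K(v_\pi - v)$, where $I_K$ denotes the local interpolation operator determined by $\mathbf{D_1}$–$\mathbf{D_2}$. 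Thus $\|v_\pi - v_I\|_{2,K} = \|I_K(v - v_\pi)\|_{2,K}$, and everything hinges on a stability bound for $I_K$.

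The main obstacle is precisely this stability estimate: bounding $\|I_K w\|_{2,K}$ for $w \in H^{2+s}(K)$ by the appropriately scaled degrees of freedom of $w$, with a constant independent of $h_K$ and of the number of vertices $N_K$. After rescaling $K$ to unit diameter, the finite dimensionality of $V_h^K$ gives $\|I_K w\|_{2,K}$ in terms of the vertex values and scaled vertex gradients of $w$; those are then controlled through the embedding $H^{2+s}(K) \hookrightarrow C^1(\overline{K})$ together with a scaled trace/Sobolev inequality, which is exactly where the hypothesis $s > 1/2$ is indispensable (the gradient evaluations require $C^1$ control, i.e.\ $2+s > 1 + d/2$, forcing $s>1/2$ when $d=3$). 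The delicate point, distinguishing this from the finite element setting, is that the polytopes carry a variable number of vertices, so one cannot reduce the rescaling to a single fixed reference element or invoke compactness there; the uniformity of the constant must instead be extracted from the shape-regularity assumptions \textbf{A1}–\textbf{A2}, for instance by covering $K$ with a shape-regular auxiliary triangulation on which the scaled embedding and trace inequalities hold with uniform constants.

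Applying this stability bound to $w = v - v_\pi$, noting that $|v - v_\pi|_{2+s,K} = |v|_{2+s,K}$ (the order-$(2+s)$ seminorm of a quadratic vanishes) and that the lower-order seminorms of $v-v_\pi$ are controlled by higher powers of $h_K$ via Proposition~\ref{app1}, yields $\|I_K(v - v_\pi)\|_{2,K} \le C h_K^{s}\,|v|_{2+s,K}$. Combining this with the bound on $\|v - v_\pi\|_{2,K}$ gives the local estimate, and squaring and summing over $K \in \Omega_h$ produces $\|v - v_I\|_{2,\Omega}^2 \le C \sum_{K} h_K^{2s}|v|_{2+s,K}^2 \le C h^{2s}|v|_{2+s,\Omega}^2$, which is the asserted inequality.
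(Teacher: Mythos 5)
The paper itself offers no proof of Proposition~\ref{app2}: it is explicitly ``recalled'' as a well-known interpolation theorem from the $C^1$ VEM literature (see, e.g., \cite{BM13,MRV2018}), so your proposal can only be measured against the standard argument in those references, which it reproduces faithfully in outline. The skeleton is right: $s>1/2$ gives $H^{2+s}(K)\hookrightarrow C^1(\overline{K})$ in both $d=2$ and $d=3$, so the DoFs ${\bf D_1}$--${\bf D_2}$ of $v$ are well defined; matching vertex values and gradients determines the $\P_3$ edge traces and $\P_1$ normal derivatives shared by neighbouring elements, so the local interpolants glue to a global $C^1$ function (and vanishing boundary vertex data forces the edge traces to vanish, so $v_I\in V$); $\P_2(K)\subseteq V_h^K$ holds, with the enhancement constraint automatic since $\PiK$ reproduces quadratics, giving $v_\pi-v_I=I_K(v_\pi-v)$; and the identity $|v-v_\pi|_{2+s,K}=|v|_{2+s,K}$, Proposition~\ref{app1}, and the final summation (the sum of squared local Slobodeckij seminorms is bounded by the global one) close the argument.

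The one substantive thinness is exactly the step you flag as the main obstacle: the uniform $H^2$-stability of $I_K$. Your phrase ``finite dimensionality of $V_h^K$ gives'' the bound after rescaling is not enough on its own, since $V_h^K$ is defined through a local biharmonic problem and changes with $K$ (including its dimension), so no reference-element or compactness argument is available, and knowing the DoF values of $I_K w$ does not by itself bound $|I_K w|_{2,K}$. In the literature this is supplied by an energy estimate for the local PDE: one lifts the polynomial boundary data of $I_K w$ (constructed, for instance, on the shape-regular auxiliary sub-triangulation guaranteed by \textbf{A1}--\textbf{A2}) to a function with uniformly controlled $H^2(K)$-norm and integrates by parts against the difference, which lies in $H_0^2(K)$, using also the enhancement constraint to control $\Delta^2 I_K w\in\P_2(K)$. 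You correctly identify where the difficulty sits and the right tool (auxiliary triangulations with uniform constants under \textbf{A1}--\textbf{A2}), so your proposal is a correct sketch of the standard proof, with this stability lemma asserted rather than proved --- which is precisely the content the paper delegates to its references.
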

\noindent In addition, we introduce the broken $H^{2}$-seminorm defined as
$$|v|_{2,h}^{2}:=\sum_{\E\in\Omega_h}|v|_{2,\E}^{2},$$
which is well defined for every $v\in L^{2}(\O)$ such that
$v|_{\E}\in H^{2}(\E)$ for all polygon $\E\in \CT_{h}$.

\begin{remark}
The classical spectral theory for compact operators developed in \cite{BO} is essential to prove the convergence in $V$-norm of $T_h \to T$. However, since stabilisation terms $s^0(\cdot,\cdot)$ needs the dofs ${\bf D_1}$ and ${\bf D_2}$ on the right hand side of \eqref{DiscreteSourceProblem} 
the operator $T_h$ is not well defined for all  $f\in V$.  To overcome this issue, we consider the orthogonal projection 
$\mathcal{P}_h:V\to V_h$ given by $f\mapsto \mathcal{P}_hf$ as the unique solution of the following system of equations
\begin{align*}
b(\mathcal{P}_hf-f,v_h)=0,\quad \forall v_h\in V_h.
\end{align*}
Moreover, for all $f\in V$ the following holds
\begin{align*}
|| f- \mathcal{P}_hf||_{0,\Omega}=\inf\limits_{v_h\in V_h} ||f-v_h ||_{0,\Omega}, \quad \forall v_h\in V_h.
\end{align*}
\end{remark}

Next, we introduce the projector $\widehat{T}_h:=T_h\mathcal{P}_h: V\to V_h\subseteq V$. Then, the spectrum of $\widehat{T}_h$ satisfies $\sp(\widehat{T}_h)=\sp(T_h) \cup \{0\}$ and the eigenfunctions of operators $\widehat{T}_h$ and $T$ are the same (e.g. \cite{MR2019}). Now, the following result establishes the convergence of  $\widehat{T}_h\to T$ and $\widehat{T}_h\to T^0$ when $h\to 0$.

\begin{lemma}\label{Lema4.11}
There exist a positive constant independent of the parameter $h$ and $s\in (1/2,1]$ such that
\begin{subequations}
\begin{alignat}{2}
|| ( T-\widehat{T}_h)f||_{2,\Omega}&\leq Ch^s || f||_{2,\O}, && \quad \forall f\in V,\label{PropP1_Hat_Th}\\
|| ( T^0-\widehat{T}_h)f||_{0,\O}&\leq Ch^s || f||_{0,\O}, && \quad \forall f\in \LO\label{PropP1_Hat_T0}.
\end{alignat}
\end{subequations}
\end{lemma}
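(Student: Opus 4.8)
The plan is to treat both bounds as standard a priori estimates for the virtual element discretisation of the source problem, writing $w=Tf$, $w^0=T^0f$ and $w_h=\widehat{T}_hf=T_h\mathcal P_hf$, and exploiting the elliptic regularity $w,w^0\in H^{2+s}(\Omega)$ with $\|w\|_{2+s,\Omega}\lesssim\|f\|_{0,\Omega}$ from Lemma~\ref{lem:uZero}. Estimate \eqref{PropP1_Hat_Th} is an energy-norm (C\'ea-type) estimate with variational crimes, whereas \eqref{PropP1_Hat_T0} will require a duality (Aubin--Nitsche) argument, since for $L^2$ data the mass-form crime carries no rate by itself.

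For \eqref{PropP1_Hat_Th} I would let $w_I\in V_h$ be the interpolant of Proposition~\ref{app2} and $w_\pi$ the piecewise $\mathbb P_2$ projection of Proposition~\ref{app1}, so that $\|w-w_I\|_{2,\Omega}$ and $|w-w_\pi|_{2,h}$ are $\lesssim h^s|w|_{2+s,\Omega}$. Setting $\delta_h:=w_h-w_I\in V_h$, the discrete ellipticity of Lemma~\ref{Disc_BoundAndEllip} gives $\|\delta_h\|_{2,\Omega}^2\lesssim a_h(\delta_h,\delta_h)=b_h(\mathcal P_hf,\delta_h)-a_h(w_I,\delta_h)$. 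On the right I insert the continuous identity $a(w,\delta_h)=b(f,\delta_h)$ and split into an $a$-part and a $b$-part. In the $a$-part I add and subtract $w_\pi$ and use the polynomial consistency of Lemma~\ref{consandstability} (so $a_h(w_\pi,\delta_h)=a(w_\pi,\delta_h)$), reducing it to $a(w-w_\pi,\delta_h)+a_h(w_\pi-w_I,\delta_h)\lesssim h^s|w|_{2+s,\Omega}\|\delta_h\|_{2,\Omega}$ by boundedness. In the $b$-part the crucial cancellation is $b(f-\mathcal P_hf,\delta_h)=0$, the defining property of $\mathcal P_h$; the remaining genuine crime is controlled elementwise by $|[b-b_h](\mathcal P_hf,\delta_h)|\lesssim\sum_{K}\|\mathcal P_hf-\Pi_K^0\mathcal P_hf\|_{0,K}\,\|\delta_h-\Pi_K^0\delta_h\|_{0,K}$, and since here $f\in V\subset H^2(\Omega)$ one gains $\lesssim h^s\|f\|_{2,\Omega}$ from the $L^2$-best-approximation errors $\|f-\mathcal P_hf\|_{0,\Omega}$ and $\|f-\Pi_K^0f\|_{0,K}$. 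Dividing by $\|\delta_h\|_{2,\Omega}$, using $|w|_{2+s,\Omega}\lesssim\|f\|_{0,\Omega}\le\|f\|_{2,\Omega}$, and a triangle inequality yields \eqref{PropP1_Hat_Th}.

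For \eqref{PropP1_Hat_T0}, with $e:=w^0-w_h^0$ I introduce the dual solution $\phi:=T^0e\in H^{2+s}(\Omega)$, so that $a(\phi,v)=b(e,v)$ for all $v\in V$ and $\|\phi\|_{2+s,\Omega}\lesssim\|e\|_{0,\Omega}$. Then $\|e\|_{0,\Omega}^2=b(e,e)=a(\phi,e)=b(f,\phi)-a(\phi,w_h^0)$, and I insert $a_h(\phi_I,w_h^0)=b_h(\mathcal P_hf,\phi_I)$, where $\phi_I,\phi_\pi$ are the interpolant and projection of $\phi$. The $a$-crime $a(\phi,w_h^0)-a_h(\phi_I,w_h^0)$ is treated exactly as above and bounded by $h^s\|\phi\|_{2+s,\Omega}\|w_h^0\|_{2,\Omega}$, while the $b$-crime $b(f,\phi)-b_h(\mathcal P_hf,\phi_I)$ again uses $b(f-\mathcal P_hf,\phi_I)=0$ and the elementwise bound. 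The key point is that every bad factor is now paired with a factor carrying the regularity of the \emph{smooth} dual solution: in particular $\|\mathcal P_hf-\Pi_K^0\mathcal P_hf\|_{0,K}$ only needs the crude bound $\lesssim\|f\|_{0,\Omega}$ (no rate is available for $f\in L^2$), because it multiplies $\|\phi_I-\Pi_K^0\phi_I\|_{0,K}\lesssim h^s\|\phi\|_{2+s,\Omega}$. To close the $a$-crime I would establish the uniform stability $\|w_h^0\|_{2,\Omega}\lesssim\|f\|_{0,\Omega}$ from Lemma~\ref{Disc_BoundAndEllip} applied to $a_h(w_h^0,w_h^0)=b_h(\mathcal P_hf,w_h^0)$. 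Collecting, $\|e\|_{0,\Omega}^2\lesssim h^s\|f\|_{0,\Omega}\|e\|_{0,\Omega}$, whence \eqref{PropP1_Hat_T0}.

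The main obstacle is the mass-form variational crime $[b-b_h](\mathcal P_hf,\cdot)$ for $L^2$ data: unlike the energy estimate, it carries no approximation rate on its own, so a direct reduction of \eqref{PropP1_Hat_T0} to \eqref{PropP1_Hat_Th} via $\|\cdot\|_{0,\Omega}\le\|\cdot\|_{2,\Omega}$ fails, and the duality argument is essential, with the rate supplied entirely by the regularity of the dual solution together with the $\mathcal P_h$-orthogonality. Secondary technical points are the uniform stability $\|w_h^0\|_{2,\Omega}\lesssim\|f\|_{0,\Omega}$ and the $L^2$-best-approximation estimate $\|f-\mathcal P_hf\|_{0,\Omega}\lesssim h^s\|f\|_{2,\Omega}$ used in \eqref{PropP1_Hat_Th}.
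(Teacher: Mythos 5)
Your proof of \eqref{PropP1_Hat_Th} follows the same standard Strang-type energy argument as the references the paper relies on (the paper itself proves this part by citation to \cite{MRV2018} and \cite[Proposition~5]{DV_camwa2022}), so that part is aligned. For \eqref{PropP1_Hat_T0}, however, you take a genuinely different route from the paper, and your motivating claim --- that the mass-form crime ``carries no rate by itself'' for $L^2$ data, so that duality is \emph{essential} --- is not accurate. Since $b_{h,K}$ and $b_K$ are symmetric and consistent on $\mathbb{P}_2(K)$, the crime annihilates polynomials in \emph{either} slot, so one may subtract the polynomial from the \emph{test} function: $[b_{h,K}-b_K](\mathcal{P}_hf,v_h)=[b_{h,K}-b_K]\bigl(\mathcal{P}_hf,v_h-(v_h)_\pi\bigr)\lesssim \|\mathcal{P}_hf\|_{0,K}\,h_K^2\,|v_h|_{2,K}$, which needs only the crude bound $\|\mathcal{P}_hf\|_{0,\Omega}\lesssim\|f\|_{0,\Omega}$ --- this is exactly the paper's estimate \eqref{conv_L2_2}. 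Consequently the paper proves the \emph{stronger} energy bound $|w^0-w_h|_{2,\Omega}\lesssim h^s\|f\|_{0,\Omega}$ directly, with the $a$-consistency terms carrying the rate through the regularity shift $\|w^0\|_{2+s,\Omega}\lesssim\|f\|_{0,\Omega}$ of Lemma~\ref{lem:uZero} (available precisely because the datum is measured in $L^2$), and then concludes \eqref{PropP1_Hat_T0} trivially via $\|\cdot\|_{0,\Omega}\le\|\cdot\|_{2,\Omega}$, cf.~\eqref{eq_conv_h2}; what genuinely fails is only the reduction to the \emph{statement} of \eqref{PropP1_Hat_Th}, which presupposes $H^2$ data. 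Your Aubin--Nitsche argument is nonetheless sound: the identity $\|e\|_{0,\Omega}^2=b(f,\phi)-a(\phi,w_h^0)$, the insertion of $a_h(\phi_I,w_h^0)=b_h(\mathcal{P}_hf,\phi_I)$, the $\mathcal{P}_h$-orthogonality, the uniform stability $\|w_h^0\|_{2,\Omega}\lesssim\|f\|_{0,\Omega}$, and the pairing of each crime with the smooth dual solution all check out and deliver the claimed $O(h^s)$. As for what each approach buys: the paper's direct argument is shorter and yields the energy-norm rate for $L^2$ data as a by-product, whereas your duality scheme gains nothing extra here --- because the consistency crimes are paired with $w_h^0$, which carries no rate, duality cannot upgrade $h^s$ to the $h^{2s}$ one would normally hope for --- so it is a valid but more elaborate path to the same conclusion.
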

\begin{proof}
The proof for \eqref{PropP1_Hat_Th} was established in \cite{MRV2018} (see also \cite[Proposition~5]{DV_camwa2022}). Now, let us prove \eqref{PropP1_Hat_T0}. Given $f\in\LO$, consider $T^0f=w^0\in   V  $ and $\widehat{T}_hf=T_h\mathcal{P}_hf=w_h\in  V_h  $ as the unique solutions of the source problems $a(w^0,v)=b(f,v) \ \forall v\in   V,$ (cf. \eqref{ContSourGPr0}) and $a_h(w_h,v_h)=b_h(\mathcal{P}_hf,v_h)\ \forall v_h\in  V_h $ (cf. \eqref{DiscreteSourceProblem}), respectively. Then, by using the same steps as those applied in \cite[Theorem~5.1]{DV_camwa2022} we obtain 
\begin{align}
|w^0-w_h|_{2,\O}^2&\leq b_h(\mathcal{P}_hf,v_h)-b(f,v)-\sum\limits_{K\in \Omega_h} \Big\{ a_{h,K}(w_I^0-w_{\pi}^0,v_h) +a_K(w_{\pi}^0-w^0,v_h) \Big\}\label{conv_L2_1}.
\end{align}
Note that
\begin{align}
b_h(\mathcal{P}_hf,v_h)-b(f,v)&=\sum\limits_{K\in \Omega_h} \Big\{b_{h,K}(\mathcal{P}_hf,v_h- (v_h)_{\pi} )-b_K(f,v_h)+b_{h,K}(\mathcal{P}_hf, (v_h)_{\pi} ) \Big\}\nonumber\\
&=\sum\limits_{K\in \Omega_h} \Big\{b_{h,K}(\mathcal{P}_hf,v_h- (v_h)_{\pi} )-b_K(\mathcal{P}_hf,v_h)+b_{h,K}(\mathcal{P}_hf, (v_h)_{\pi} ) \Big\}\nonumber\\
&=\sum\limits_{K\in \Omega_h} \Big\{b_{h,K}(\mathcal{P}_hf,v_h- (v_h)_{\pi} )-b_K(\mathcal{P}_hf,v_h- (v_h)_{\pi} ) \Big\}\nonumber\\
&\leq C\Big(\sum\limits_{K\in \Omega_h} ||\mathcal{P}_hf ||_{0,K}^2\Big)^{1/2}\Big(||v_h- (v_h)_{\pi}  ||_{0,K}^2\Big)^{1/2}\nonumber\\
&\leq C\Big(\sum\limits_{K\in \Omega_h} ||\mathcal{P}_hf ||_{0,K}^2\Big)^{1/2}h^2||v_h ||_{0,\O}\nonumber\\
&\leq Ch^2\Big(\sum\limits_{K\in \Omega_h} ||f ||_{0,K}^2\Big)^{1/2}||v_h ||_{0,\O}\nonumber\\
&\leq Ch^2||f ||_{0,\Omega}||v_h ||_{0,\Omega}\label{conv_L2_2}.
\end{align}
Moreover, \cite[Theorem~5.1]{DV_camwa2022} leads to
\begin{align}
&\sum\limits_{K\in \Omega_h} \Big\{ a_{h,K}(w_I^0-w_{\pi}^0,v_h) +a_K(w_\pi^0-w^0,v_h) \Big\}\leq Ch^s||f||_{0,\O}||v_h ||_{2,\O}.\label{conv_L2_3}
\end{align}
Thus, by including the estimates  \eqref{conv_L2_2} and \eqref{conv_L2_3} in right-hand side of \eqref{conv_L2_1} we get
\begin{align}\label{eq_conv_h2}
|| (\widehat{T}_h- T^0)f||_{0,\O}=||w^0-w_h ||_{0,\O}\leq||w^0-w_h ||_{2,\O}\leq |w^0-w_h|_{2,\O}\leq Ch^s|| f||_{0,\O}.
\end{align}
\end{proof}
\subsection{Spectral convergence}
We recall the definitions of the spectral projections $E$ and $E_h$, associated with pairs $(T,\mu)$,  and  $(T_h,\mu_h)$, respectively, which are defined as follows:
\begin{align*}
E:=E(\mu)=(2\pi i)^{-1}\int_{\mathcal{C}} (z-T)^{-1}dz,
\quad 
E_h:=E_h(\mu_h)=(2\pi i)^{-1}\int_{\mathcal{C}} (z-T_h)^{-1}dz.
\end{align*}
$E$ and $E_h$ are projections onto the space of generalized eigenvectors $R(E)$ and $R(E_h)$, respectively, where $R$ denotes the range. Moreover, we also consider the definition of the \textit{gap} $\hat{\delta}$ between the closed subspaces $\mathcal{M}$ and $\mathcal{N}$ of $L^2(\Omega)$ (see \cite[Chapter~II, Section~6]{BO}):
$$
\widehat{\delta}(\mathcal{M},\mathcal{N}) :=\max\left\{\delta(\mathcal{M},\mathcal{N}),\delta(\mathcal{N},\mathcal{M})\right\}, \text{ with }
\delta(\mathcal{M},\mathcal{N})
:=\sup_{\substack{\mathbf{x}\in\mathcal{M}, \left\|\mathbf{x}\right\|_{ 0,\Omega}=1}}
 \mathrm{dist}(\mathbf{x},\mathcal{N}).
 $$

As a consequence of these definitions, we can follow the same arguments as those applied to \cite[Section~5]{DV_camwa2022} to obtain the following results.

\begin{theorem}
    There exists a positive constant independent of the parameter $h$ such that
    \begin{subequations}
        \begin{align}
            \widehat{\delta}(R(E),R(E_h)) \leq C\gamma_h \nonumber\\
            |\mu-\mu_h^{(j)}|\leq C \gamma_h\nonumber,
        \end{align}
    \end{subequations}
    where 
 $\gamma_h:=\sup\limits_{\substack{f\in R(E)\\||f||_{2,\Omega}= 1}}\left\|(T -\widehat{T}_{h})f\right\|_{2,\Omega}$. 
\end{theorem}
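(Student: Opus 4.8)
The plan is to follow the classical spectral approximation theory of Babuška and Osborn (as cited through \cite{BO}), which reduces the convergence of eigenvalues and eigenspaces to the convergence of the solution operators in operator norm, together with bounds on the resolvent along a fixed contour. First I would fix an isolated eigenvalue $\mu$ of $T$ of finite multiplicity, and choose a circle $\mathcal{C}$ in the complex plane enclosing $\mu$ but separating it from the rest of $\spec(T)$; this is possible since $T$ is compact and self-adjoint, so its nonzero spectrum consists of isolated eigenvalues of finite multiplicity (Proposition on compactness together with the spectral characterisation Theorem). On $\mathcal{C}$ the resolvent $R_z(T)=(z-T)^{-1}$ is bounded uniformly, and the spectral projection $E$ is given by the Dunford integral already displayed in the text.

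The key quantity is $\gamma_h:=\sup_{f\in R(E),\,\|f\|_{2,\Omega}=1}\|(T-\widehat{T}_h)f\|_{2,\Omega}$, which measures the convergence of $\widehat{T}_h$ to $T$ restricted to the invariant subspace $R(E)$. The core step is to establish that $\gamma_h\to 0$ and that, consequently, for $h$ small enough $\mathcal{C}\subset\rho(\widehat{T}_h)$ with a uniform resolvent bound on $\mathcal{C}$; this uses the identity $R_z(\widehat{T}_h)=R_z(T)\bigl(I-(\widehat{T}_h-T)R_z(T)\bigr)^{-1}$ and a Neumann-series argument, valid once $\|(\widehat{T}_h-T)R_z(T)\|$ is small. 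The pointwise bound \eqref{PropP1_Hat_Th} from Lemma~\ref{Lema4.11} gives exactly $\gamma_h\lesssim h^s$, since $R(E)$ is finite dimensional and consists of eigenfunctions lying in $H^{2+s}(\Omega)$ by Lemma~\ref{lem:uStar}; on a finite-dimensional space the supremum defining $\gamma_h$ is attained and controlled by the elementwise regularity. Having $\mathcal{C}\subset\rho(\widehat{T}_h)$, I would define $E_h$ by the corresponding contour integral and obtain the gap estimate $\widehat{\delta}(R(E),R(E_h))\lesssim\gamma_h$ by subtracting the two Dunford integrals, writing $E-E_h=(2\pi i)^{-1}\int_{\mathcal{C}}\bigl(R_z(T)-R_z(\widehat{T}_h)\bigr)\,dz$ and estimating the integrand via the second resolvent identity $R_z(T)-R_z(\widehat{T}_h)=R_z(T)(T-\widehat{T}_h)R_z(\widehat{T}_h)$.

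For the eigenvalue estimate $|\mu-\mu_h^{(j)}|\lesssim\gamma_h$, I would invoke the standard comparison between the restrictions of $T$ and $\widehat{T}_h$ to $R(E)$ and $R(E_h)$, respectively: since both operators are self-adjoint with respect to their natural inner products (Proposition~\ref{prop:adjCont} and its discrete analogue), the eigenvalues of the finite-dimensional operator $\widehat{T}_h|_{R(E_h)}$ converge to $\mu$ at the rate at which $\widehat{T}_h$ approximates $T$ on the invariant subspace, which is precisely $\gamma_h$. The cleanest route is to observe that $\|(T-E_h T E_h)|_{R(E)}\|\lesssim\gamma_h$ and apply the Osborn eigenvalue-error theorem, or equivalently to use the min–max characterisation of eigenvalues for self-adjoint operators to transfer the operator-norm bound into an eigenvalue perturbation bound. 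The main obstacle I anticipate is the uniform invertibility of $I-(\widehat{T}_h-T)R_z(T)$ along the entire contour $\mathcal{C}$ for all sufficiently small $h$: one must ensure the resolvent bound on $T$ is uniform on $\mathcal{C}$ (straightforward from self-adjointness and the distance of $\mathcal{C}$ to $\spec(T)$) and, more delicately, that the operator norm $\|\widehat{T}_h-T\|_{\mathcal{L}(V)}$—not merely its action on $R(E)$—is controlled, which here follows from the full convergence estimate \eqref{PropP1_Hat_Th} of Lemma~\ref{Lema4.11} holding on all of $V$. With this uniformity in hand the remaining contour estimates are routine, so the essential analytic input is entirely contained in Lemma~\ref{Lema4.11} and the finite-dimensional regularity Lemma~\ref{lem:uStar}.
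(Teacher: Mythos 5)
Your proposal is correct and follows essentially the same route as the paper: the paper proves this theorem simply by invoking the classical Babu\v{s}ka--Osborn spectral approximation theory via \cite[Section~5]{DV_camwa2022} and \cite{BO}, which is precisely the contour-integral/resolvent argument you spell out, with the operator-norm convergence of Lemma~\ref{Lema4.11} guaranteeing the uniform resolvent bounds and the restriction to the finite-dimensional eigenspace $R(E)$ yielding the rate $\gamma_h$. Your explicit treatment of the Neumann-series invertibility along the contour and the distinction between full operator-norm convergence (needed for $\mathcal{C}\subset\rho(\widehat{T}_h)$) and convergence on $R(E)$ (which gives $\gamma_h$) matches the structure of the cited proofs.
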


\begin{theorem}
There exist $s> 1/2$  such that
\begin{equation}
\left\|(T -\widehat{T}_{h})f\right\|_{2,\Omega}
\lesssim  h^{\min\{s,1\} }\left\|f\right\|_{2,\Omega}\qquad \forall f\in R(E)\,,
\label{bou_gamma_h}    
\end{equation}
and, as a consequence,
\begin{equation}
\gamma_{h}  \lesssim  h^{\min\{s,1\} }. 
\label{bound1r}
\end{equation}
\label{gapr}
\end{theorem}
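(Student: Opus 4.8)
The plan is to recognise that the first estimate \eqref{bou_gamma_h} is essentially the operator convergence bound \eqref{PropP1_Hat_Th} of Lemma~\ref{Lema4.11} restricted to the finite-dimensional subspace $R(E)$, and that \eqref{bound1r} then follows immediately by passing to the supremum in the definition of $\gamma_h$. In other words, the substantive analysis has already been carried out in Lemma~\ref{Lema4.11}, and this theorem merely specialises and reorganises it into the form needed by the preceding spectral-convergence theorem.

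First I would recall that $R(E)$ is spanned by the (generalised) eigenfunctions of $T$ associated with the cluster $\mu$, so that $R(E)\subseteq V$; hence every $f\in R(E)$ is an admissible argument in \eqref{PropP1_Hat_Th}. Applying that bound gives $\|(T-\widehat{T}_h)f\|_{2,\Omega}\le C h^{s}\|f\|_{2,\Omega}$ for all $f\in R(E)$. Since the regularity exponent furnished by Lemma~\ref{lem:uZero} satisfies $s\in(1/2,1]$, we have $\min\{s,1\}=s$, so this is exactly \eqref{bou_gamma_h}. I note in passing that invoking the sharper eigenfunction regularity of Lemma~\ref{lem:uStar}, namely $u\in H^{2+s}(\Omega)$ with $\|u\|_{2+s,\Omega}\le C_{\Omega}^{\lambda}\|u\|_{2,\Omega}$, does not improve the exponent: the $H^2$-type projection error controlling $(T-\widehat{T}_h)$ is already of order $h^{s}$, so the generic bound \eqref{PropP1_Hat_Th} is sharp for this purpose. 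Second, for \eqref{bound1r} I would insert this estimate into $\gamma_h:=\sup_{f\in R(E),\,\|f\|_{2,\Omega}=1}\|(T-\widehat{T}_h)f\|_{2,\Omega}$; taking the supremum over the unit sphere of $R(E)$ and using \eqref{bou_gamma_h} with $\|f\|_{2,\Omega}=1$ yields $\gamma_h\le C h^{\min\{s,1\}}$, which is the claim.

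The only point requiring a little care — the closest thing to an obstacle — is the uniformity of the constant over all of $R(E)$, so that a single $h$-independent $C$ serves. This is guaranteed on two counts: the bound \eqref{PropP1_Hat_Th} already holds with one $h$-independent constant valid on the whole of $V$, and $R(E)$ is finite-dimensional because the eigenvalues of the compact operator $T$ have finite multiplicity, so the per-eigenfunction regularity constants of Lemma~\ref{lem:uStar} may be bounded by their maximum over this finite-dimensional space. Consequently no new estimate beyond Lemma~\ref{Lema4.11} is needed, and the theorem is a direct corollary.
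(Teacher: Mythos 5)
Your proposal is correct and matches the paper's intent: the paper gives no standalone proof of this theorem, remarking only that it follows by the arguments of the cited reference, and the substance is indeed the operator bound \eqref{PropP1_Hat_Th} of Lemma~\ref{Lema4.11}, which you restrict to $R(E)\subseteq V$ (with $\min\{s,1\}=s$ since $s\in(1/2,1]$) and then pass to the supremum over the unit sphere of $R(E)$ to get \eqref{bound1r}. Your remarks on uniformity of the constant and on Lemma~\ref{lem:uStar} not improving the exponent are sound, though strictly unnecessary since the constant in \eqref{PropP1_Hat_Th} is already $h$-independent on all of $V$.
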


\begin{theorem}
Given a space $R(E)\subset H^{2+s}(\Omega)$, ${s}\in(1/2,1]$, 
there exists a positive constant $h_0$ independent of $h$, 
such that for all $ h<h_0$ we have 
\begin{subequations}\label{teo:mainConv}
\begin{align}
\left|\l-\l_h^{(j)}\right|&\lesssim  h^{2s},\qquad j=1,\ldots,m\,.\label{double_conv_1}\\[1ex]
\left|\l-\l_h^{(j)}\right|&\lesssim \Big( ||u-u_h||_{2,\Omega}^2 + ||u-\Pi_K^0 u_h||_{0,\Omega}^2 + |u- \PiK u_h|_{2,h}^2 \Big)\label{double_conv_2}.
\end{align}
\end{subequations}
\end{theorem}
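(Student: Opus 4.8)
The plan is to prove the quadratic bound \eqref{double_conv_2} first, by means of an algebraic identity for the eigenvalue error, and then to deduce the rate \eqref{double_conv_1} by inserting into that bound the a priori estimates already available. Throughout I fix a continuous eigenpair $(\lambda,u)$ of Problem~\ref{WMPr} normalised by $b(u,u)=1$ and a discrete eigenpair $(\lambda_h^{(j)},u_h)$ of Problem~\ref{DiscVP} normalised by $b_h(u_h,u_h)=1$, with $u\in R(E)$ chosen as the continuous eigenfunction paired to $u_h$ through the spectral projections $E,E_h$; when $\lambda$ has multiplicity $m$ this pairing is the one furnished by the gap theory of \cite{BO} (cf. \cite{DV_camwa2022,MRV2018}).

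First I would derive the doubling identity. Expanding $a(u-u_h,u-u_h)-\lambda b(u-u_h,u-u_h)$, using the eigenrelation $a(u,v)=\lambda b(u,v)$ with $v=u_h\in V_h\subset V$ to cancel the cross terms, and then replacing $a(u_h,u_h)=\lambda_h+(a-a_h)(u_h,u_h)$ and $\lambda b(u_h,u_h)=\lambda+\lambda(b-b_h)(u_h,u_h)$ (which follow from $a_h(u_h,u_h)=\lambda_h b_h(u_h,u_h)$ and the two normalisations), one arrives at
\[
\lambda - \lambda_h^{(j)} = \lambda\, b(u-u_h,u-u_h) - a(u-u_h,u-u_h) + (a - a_h)(u_h,u_h) - \lambda\,(b - b_h)(u_h,u_h).
\]
This is the classical Babu\v{s}ka--Osborn identity adapted to the virtual setting, in which the last two terms measure the inconsistency of $a_h$ and $b_h$.

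Next I would bound the four contributions. The first two are controlled by $\|u-u_h\|_{2,\Omega}^2$ via the continuity of $a$ and $b$ (Lemma~\ref{Bound_Elli}) and $\|\cdot\|_{0,\Omega}\le\|\cdot\|_{2,\Omega}$. For the consistency terms I would use the local definitions of $a_{h,K},b_{h,K}$, the $a_K$- and $b_K$-orthogonality of $\Pi_K^\Delta$ and $\Pi_K^0$, and the stabilisation equivalences of Lemma~\ref{consandstability}, to write
\[
(a-a_h)(u_h,u_h) = \sum_{K\in\Omega_h}\Big[ |u_h - \Pi_K^\Delta u_h|_{2,K}^2 - s_K^\Delta(u_h - \Pi_K^\Delta u_h,\, u_h - \Pi_K^\Delta u_h)\Big],
\]
and the analogous expression for $(b-b_h)(u_h,u_h)$ with $\Pi_K^0$ and $s_K^0$; the stability bounds then yield $|(a-a_h)(u_h,u_h)|\lesssim |u_h-\Pi_K^\Delta u_h|_{2,h}^2$ and $|(b-b_h)(u_h,u_h)|\lesssim\|u_h-\Pi_K^0u_h\|_{0,\Omega}^2$. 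Splitting $u_h-\Pi_K^\Delta u_h=(u_h-u)+(u-\Pi_K^\Delta u_h)$ (and likewise with $\Pi_K^0$) and applying the triangle inequality converts these into the three terms on the right of \eqref{double_conv_2}, establishing that estimate.

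Finally, \eqref{double_conv_1} follows by estimating each term of \eqref{double_conv_2} at rate $h^{2s}$: the factor $\|u-u_h\|_{2,\Omega}^2\lesssim h^{2s}$ is the squared eigenfunction rate coming from the gap estimate $\widehat{\delta}(R(E),R(E_h))\lesssim\gamma_h$ together with $\gamma_h\lesssim h^{\min\{s,1\}}$ of Theorem~\ref{gapr}; the term $\|u-\Pi_K^0 u_h\|_{0,\Omega}^2\lesssim h^{2s}$ follows from $u-\Pi_K^0u_h=(u-\Pi_K^0u)+\Pi_K^0(u-u_h)$, whose polynomial part is of (negligible) order $h^{2+s}$ and whose projection part is bounded by $\|u-u_h\|_{0,\Omega}^2$ via $L^2$-stability; and $|u-\Pi_K^\Delta u_h|_{2,h}^2\lesssim h^{2s}$ follows analogously from Proposition~\ref{app1}, giving $|u-\Pi_K^\Delta u|_{2,K}\lesssim h_K^s|u|_{2+s,K}$, and the $H^2$-seminorm stability of $\Pi_K^\Delta$. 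I expect the main obstacle to be the correct pairing of $u_h^{(j)}$ with an element $u\in R(E)$ when $\lambda$ is a multiple eigenvalue, where the spectral-projection machinery and the gap estimate are essential; by contrast the identity manipulation and the consistency/stability bounds are routine, but must be tracked carefully so as to land exactly on the three terms of \eqref{double_conv_2}.
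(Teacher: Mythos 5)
Your proposal is correct and takes essentially the same route as the paper: the paper proves the theorem by citing \cite[Theorem~4.5]{MRV2018}, whose argument is precisely the Babu\v{s}ka--Osborn-type identity you derive, with the adaptation (which the paper states explicitly) of using $\PiK u_h$ in place of the polynomial approximant $w_\pi$ and the coincidence $\Pi_K^0 v_h=\PiK v_h$ on $V_h$ --- exactly the substitutions your consistency-term bounds implement to land on the three terms of \eqref{double_conv_2}. Your derivation of \eqref{double_conv_1} by inserting the a priori rates into \eqref{double_conv_2} is a sound (and slightly more self-contained) way to recover what the paper obtains directly from the cited reference.
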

\begin{proof}
Estimate \eqref{double_conv_1} was established in \cite[Theorem~4.5]{MRV2018}. The proof for \eqref{double_conv_2} can be follows repeating the same arguments as those used in \cite[Theorem~4.5]{MRV2018}, but in this case considering  $\PiK u_h$ instead of $w_{\pi}$ and using the fact  $\Pi_K^0v_h=\PiK v_h$ for all $v_h\in V_h$ .
\end{proof}

The following lemma establishes an $L^2$-norm error estimate for the source problem associated with the operator $\widehat{T}_h$, which will be useful in a posteriori error analysis.

\begin{lemma}\label{Lem410}
There exists a positive constant $C$ independent of parameter $h$ such that for all $f\in R(E)$, if $w=Tf$ and $w_h=\widehat{T}_hf=T_h\mathcal{P}_hf$, then
\begin{align*}
||w-w_h ||_{0,\Omega}
\leq C h^s\Big\{|w-w_h|_{2,\Omega}+|w-\PiK w_h|_{2,h} +||w-\Pi_K^0 u_h ||_{0,\Omega} \Big\}.
\end{align*} 
\end{lemma}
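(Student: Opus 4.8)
The plan is to run an Aubin--Nitsche duality argument based on the solution operator $T^0$. First I would introduce the dual function $\phi:=T^0(w-w_h)\in V$, characterised by $a(\phi,v)=b(w-w_h,v)$ for all $v\in V$. Applying Lemma~\ref{lem:uZero} with $f^0=w-w_h\in\LO$ gives the extra regularity $\phi\in\HdsO$ together with $\|\phi\|_{2+s,\O}\le C_\O\|w-w_h\|_{0,\O}$. Testing the dual problem with $v=w-w_h$ produces the representation $\|w-w_h\|_{0,\O}^2=b(w-w_h,w-w_h)=a(\phi,w-w_h)$, which is the starting point. Letting $\phi_I\in V_h$ be the interpolant of $\phi$ from Proposition~\ref{app2}, I would split $a(\phi,w-w_h)=a(\phi-\phi_I,w-w_h)+a(\phi_I,w-w_h)$ and treat the two contributions separately.

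The first contribution is immediate: since $a(\cdot,\cdot)$ is continuous in the $H^2$-seminorm, Proposition~\ref{app2} yields $|a(\phi-\phi_I,w-w_h)|\le|\phi-\phi_I|_{2,\O}\,|w-w_h|_{2,\O}\lesssim h^{s}\|\phi\|_{2+s,\O}\,|w-w_h|_{2,\O}\lesssim h^{s}\|w-w_h\|_{0,\O}\,|w-w_h|_{2,\O}$, which already accounts for the first term on the right-hand side.

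For the second contribution I would use both source identities. Since $\phi_I\in V_h\subseteq V$, problem~\eqref{ContSourGPr} gives $a(\phi_I,w)=b(f,\phi_I)$, while problem~\eqref{DiscreteSourceProblem} gives $a_h(\phi_I,w_h)=b_h(\mathcal{P}_hf,\phi_I)$; subtracting and inserting $a_h(\phi_I,w_h)$ leads to
\begin{align*}
a(\phi_I,w-w_h)=\underbrace{\big[b(f,\phi_I)-b_h(\mathcal{P}_hf,\phi_I)\big]}_{(\mathrm I)}+\underbrace{\sum_{\E\in\Omega_h}\big[a_{h,\E}(\phi_I,w_h)-a_{\E}(\phi_I,w_h)\big]}_{(\mathrm{II})}.
\end{align*}
The two brackets are the consistency errors of the discrete forms. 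For $(\mathrm{II})$ I would work element by element: using the consistency of Lemma~\ref{consandstability} to subtract a polynomial $\phi_\pi\in\P_2(\E)$ in the first slot and $\PiK w_h\in\P_2(\E)$ in the second, all fully polynomial contributions cancel, so that $a_{h,\E}(\phi_I,w_h)-a_{\E}(\phi_I,w_h)=a_{h,\E}(\phi_I-\phi_\pi,w_h-\PiK w_h)-a_{\E}(\phi_I-\phi_\pi,w_h-\PiK w_h)$. Cauchy--Schwarz together with the boundedness of $a_{h,\E}$ (stability, Lemma~\ref{consandstability}) bounds each element term by $C|\phi_I-\phi_\pi|_{2,\E}\,|w_h-\PiK w_h|_{2,\E}$; summing over the elements and invoking Propositions~\ref{app1}--\ref{app2} and the regularity of $\phi$ gives $(\mathrm{II})\lesssim h^{s}\|\phi\|_{2+s,\O}\,|w_h-\PiK w_h|_{2,h}\lesssim h^{s}\|w-w_h\|_{0,\O}\big(|w-w_h|_{2,\O}+|w-\PiK w_h|_{2,h}\big)$, where I used the triangle inequality $|w_h-\PiK w_h|_{2,h}\le|w-w_h|_{2,\O}+|w-\PiK w_h|_{2,h}$. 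This reproduces the second term.

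The term $(\mathrm I)$ is treated analogously with the $L^2$-projection: using $b(f,\phi_I)=b(\mathcal{P}_hf,\phi_I)$ (definition of $\mathcal{P}_h$) together with the structure of $b_{h,\E}$ and the orthogonality of $\Pi_K^0$, each element contribution reduces to $b_{\E}(\mathcal{P}_hf-\Pi_K^0\mathcal{P}_hf,\phi_I-\Pi_K^0\phi_I)-s_{\E}^{0}(\mathcal{P}_hf-\Pi_K^0\mathcal{P}_hf,\phi_I-\Pi_K^0\phi_I)$, which after Cauchy--Schwarz and the stability of $s_{\E}^{0}$ is controlled by a product of $L^2$-projection residuals carrying a factor $h^{s}\|w-w_h\|_{0,\O}$. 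Collecting the three estimates and absorbing the resulting $h^{s}\|w-w_h\|_{0,\O}^2$ into the left-hand side for $h$ small enough, a division by $\|w-w_h\|_{0,\O}$ gives the claim. The step I expect to be the main obstacle is precisely this control of $(\mathrm I)$: one must turn the $\mathcal{P}_hf$-dependent projection residual into the asserted quantity $\|w-\Pi_K^0w_h\|_{0,\O}$ (up to an absorbable multiple of $\|w-w_h\|_{0,\O}$) by a careful insertion of $\Pi_K^0w$ and a sharp use of the stability bounds of Lemma~\ref{consandstability}, rather than the cruder $h^{2}\|f\|_{0,\O}$ estimate employed in the a priori analysis of Lemma~\ref{Lema4.11}.
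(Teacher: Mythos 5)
Your proposal reproduces the paper's argument almost step for step: the same dual problem $a(\phi,v)=b(w-w_h,v)$ (the paper calls the dual solution $v$ and invokes Lemma~\ref{lem:uZero} for the regularity bound), the same splitting through the interpolant of Proposition~\ref{app2}, the same identities from \eqref{ContSourGPr} and \eqref{DiscreteSourceProblem}, and the same polynomial-cancellation treatment of the $a$-consistency term $(\mathrm{II})$. Everything you carry out in full is correct.

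The one step you leave open --- closing the bound for $(\mathrm I)$ --- is exactly where the paper's proof uses its only non-generic ingredient, and your sketched plan is missing it. Since $T$ is self-adjoint, the elements of $R(E)$ are genuine eigenfunctions, so $w=Tf=\mu f$ and hence $\mathcal{P}_hf=\mu^{-1}\mathcal{P}_hw$. The paper then inserts the \emph{polynomial} $\mu^{-1}\PiK w_h$ in the first slot of both $b_K$ and $b_{h,K}$ (legitimate by the consistency property of Lemma~\ref{consandstability}; this is equivalent to your $\Pi_K^0$-orthogonality reduction, because $\Pi_K^0=\PiK$ on $V_h^K$ in this lowest-order case), leaving $b_K(\mathcal{P}_hf-\mu^{-1}\PiK w_h,v_I-v_\pi)-b_{h,K}(\mathcal{P}_hf-\mu^{-1}\PiK w_h,v_I-v_\pi)$. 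Cauchy--Schwarz, the $L^2$ interpolation bounds $\|v-v_I\|_{0,K}+\|v-v_\pi\|_{0,K}\lesssim h^2\|v\|_{2+s,K}$, the splitting $\|\mathcal{P}_hw-\PiK w_h\|_{0,K}\le\|w-\mathcal{P}_hw\|_{0,K}+\|w-\PiK w_h\|_{0,K}$, and the best-approximation property $\|w-\mathcal{P}_hw\|_{0,\O}\le\|w-w_h\|_{0,\O}$ (valid since $w_h\in V_h$) then give $(\mathrm I)\lesssim h^2\big\{\|w-w_h\|_{0,\O}+\|w-\PiK w_h\|_{0,\O}\big\}\|w-w_h\|_{0,\O}$, after which absorption and division finish exactly as you describe (your claimed factor $h^{s}$ is weaker than the paper's $h^{2}$ but would also suffice). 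Note that your intended ``careful insertion of $\Pi_K^0 w$'' cannot by itself relate the residual $(I-\Pi_K^0)\mathcal{P}_hf$ to $w$: the eigenrelation $f=\mu^{-1}w$ is indispensable here, and it is also the reason the lemma is stated only for $f\in R(E)$ rather than for all $f\in V$.
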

\begin{proof}
Let $f\in R(E)$, if $w=Tf$ and $w_h=\widehat{T}_hf=T_h\mathcal{P}_hf$. Consider $v\in  V  $ the unique solution of 
\begin{align}\label{eq_Aub_NIS}
a(v,z)=b(w-w_h,z)\qquad \forall z\in   V  .
\end{align}
From the definition of $T$ we have that $T(w-w_h)=v$, and using Lemma~\ref{lem:uZero} results $v\in  H^{2+s}(\O)$ satisfying 
\begin{align}\label{reg_of_v}
||v||_{2+s,\O}\leq C||w-w_h ||_{0,\O},
\end{align}
for some $s\in (1,1/2]$.

On the other hand we consider $v_I\in  V_h  $ like in the Proposition~\ref{app2}. Thus, testing the equation \eqref{eq_Aub_NIS} with $z\equiv w-w_h\in V$ we have
\begin{align}
&||w-w_h||_{0,\O}^2=b(w-w_h,w-w_h)=a(v,w-w_h)=a(w-w_h,v-v_I)+a(w-w_h,v_I)\nonumber\\
&\leq |w-w_h|_{2,\O}|v-v_I|_{2,\O}+ a(w-w_h,v_I)\nonumber\\
&\leq Ch^s|w-w_h|_{2,\O}|v|_{2+s,\O} + \Big\{a_h(w_h,v_I)-a(w_h,v_I)+b(f,v_I)-b_h(\mathcal{P}_hf,v_I)\Big\}\nonumber\\
&\leq Ch^s|w-w_h|_{2,\O}||w-w_h||_{0,\O} + \Big\{a_h(w_h,v_I)-a(w_h,v_I)+b(f,v_I)-b_h(\mathcal{P}_hf,v_I)\Big\}.\label{conv_norL2_1}
\end{align}
Next, note that using the properties of consistence and stability for $a_h(\cdot,\cdot)$, Proposition~\ref{app2}, \eqref{reg_of_v}, and Proposition~\ref{lem:uZero}, we obtain
\begin{align}
&a_h(w_h,v_I)-a(w_h,v_I)=\sum\limits_{K\in \Omega_h} \Big\{ a_{h,K}(w-\PiK w_h,v_I-v_{\pi})+a(w_h-\PiK w_h,v_I-v_\pi)\Big\}\nonumber\\
&=\Big\{\sum\limits_{K\in \Omega_h}|w_h-\PiK w_h|_{2,K}^2 \Big\}^{1/2} \Big\{\sum\limits_{K\in \Omega_h}|v_I - v_\pi|_{2,K}^2 \Big\}^{1/2}\nonumber\\
&\leq \Big\{\sum\limits_{K\in \Omega_h}|w-w_h|_{2,K}^2+|w-\PiK w_h|_{2,K}^2 \Big\}^{1/2} \Big\{\sum\limits_{K\in \Omega_h}|v-v_I|_{2,K}^2+|v - v_\pi|_{2,K}^2 \Big\}^{1/2}\nonumber\\
&\leq Ch^s\Big\{\sum\limits_{K\in \Omega_h}|w-w_h|_{2,K}^2+|w-\PiK w_h|_{2,K}^2 \Big\}^{1/2}|v|_{2+s,\O}\nonumber\\
&\leq Ch^s\Big\{\sum\limits_{K\in \Omega_h}|w-w_h|_{2,K}^2+|w-\PiK w_h|_{2,K}^2\Big\}^{1/2}||w-w_h||_{0,\O}\nonumber\\
&\leq Ch^s\Big\{|w-w_h|_{2,\O}+|w-\PiK w_h|_{2,h}\Big\}||w-w_h||_{0,\O}.\label{conv_norL2_2}
\end{align}
Besides, using the fact that $f\in R(E)$, $w=Tf=\mu f$, the definition of $\mathcal{P}_h$, the properties of consistence and stability of $b(\cdot,\cdot)$, \eqref{reg_of_v}, and  Lemma~\ref{lem:uZero} we deduce
\begin{align}
&b(f,v_I)-b_h(\mathcal{P}_hf,v_I)=b(\mathcal{P}_hf,v_I)-b_h(\mathcal{P}_hf,v_I)=\sum\limits_{K\in \Omega_h} \Big\{b_K(f,v_I) -b_{h,K}(\mathcal{P}_hf,v_I) \Big\}\notag\\
&=\sum\limits_{K\in \Omega_h} \Big\{b_K(\mathcal{P}_hf-\mu^{-1}\PiK w_h,v_I-v_{\pi}) -b_{h,K}(\mathcal{P}_hf-\mu^{-1}\PiK w_h,v_I-v_{\pi}) \Big\} \notag\\
&\leq\Big\{\sum\limits_{K\in \Omega_h}||\mathcal{P}_hf-\mu^{-1}\PiK w_h ||_{0,K}^2 \Big\}^{1/2}\Big\{\sum\limits_{K\in \Omega_h}(||v-v_I ||_{0,K}^2 + || v-v_{\pi}||_{0,K}^2 )\Big\}^{1/2}\notag\\
&\leq \Big\{\sum\limits_{K\in \Omega_h}||\mathcal{P}_hf-\mu^{-1}\PiK w_h ||_{0,K}^2 \Big\}^{1/2}\Big\{\sum\limits_{K\in \Omega_h}(h^4||v||_{2+s,K}^2 + h^{2(2+s)}|| v||_{2+s,K}^2 )\Big\}^{1/2}\notag\\
&\leq Ch^2\Big\{\sum\limits_{K\in \Omega_h}||\mathcal{P}_hf-\mu^{-1}\PiK w_h ||_{0,K}^2 \Big\}^{1/2}|v|_{2+s,\O}\notag\\
&\leq Ch^2\mu^{-1}\Big\{\sum\limits_{K\in \Omega_h}||\mathcal{P}_hw-\PiK w_h ||_{0,K}^2 \Big\}^{1/2}||w-w_h||_{0,\O}\notag\\
&\leq Ch^2\mu^{-1}\Big\{\sum\limits_{K\in \Omega_h}(||w-\mathcal{P}_hw||_{0,K}^2+||w-\PiK w_h ||_{0,K}^2) \Big\}^{1/2}||w-w_h||_{0,\O}\notag\\
&\leq Ch^2\Big\{||w-\mathcal{P}_hw||_{0,\O}+||w-\PiK w_h ||_{0,\Omega} \Big\}||w-w_h||_{0,\O}\notag\\
&\leq Ch^2\Big\{||w-w_h||_{0,\O}+||w-\PiK w_h ||_{0,\Omega} \Big\}||w-w_h||_{0,\O}.\label{conv_norL2_3}
\end{align}

Therefore, inserting the estimates \eqref{conv_norL2_2} and \eqref{conv_norL2_3} in~\eqref{conv_norL2_1}, and multiplying by  $||w-w_h||_{0,\Omega}^{-1}$ we have
\begin{align*}
||w-w_h ||_{0,\O}&\leq C h^s\Big\{|w-w_h|_{2,\O}+|w-\PiK w_h|_{2,h}\Big\}+Ch^2\Big\{||w-w_h||_{0,\O}+||w-\PiK u_h ||_{0,\Omega} \Big\}\nonumber\\
&\leq C h^s\Big\{|w-w_h|_{2,\Omega}+|w-\PiK w_h|_{2,h} +||w-\PiK u_h ||_{0,\Omega} \Big\}.
\end{align*}
\end{proof}

\subsection{Error estimates on $L^2$-norm for the eigenfunctions}

In this section, we present an error estimation result for eigenfunctions in the $L^2$-norm that will be used to demonstrate reliability in a posteriori error estimation. To this end, we first introduce the proof of an auxiliary lemma.
\begin{lemma}\label{Lema4.12} Let $(\mu_h^{(j)},u_h)$ be an eigenpair of $\widehat{T}_h$  with $j=1,2,...,m$ and $||u_h ||_{0,\O}=1$. Then, there exist an eigenfunction $u\in \LO$ of $T$ associated to $\mu$, and a positive constant $C$ such that
\begin{align}
||u-u_h ||_{0,\O}\leq  Ch^s\Big\{|w-w_h|_{2,\O}+||w-\Pi_K^0 w_h ||_{0,\Omega}+|w-\PiK  w_h|_{2,h}\Big\} \label{aux_pre_th}
\end{align}  
\end{lemma}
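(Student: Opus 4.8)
The plan is to use a duality (Aubin--Nitsche) argument analogous to the one in Lemma~\ref{Lem410}, but now applied to the eigenfunction error rather than the source solution error. The key idea is to connect the eigenfunction error $\|u-u_h\|_{0,\Omega}$ to the already-controlled source operator error $\|(T-\widehat{T}_h)f\|$ by exploiting the eigenvalue relations $Tu=\mu u$ and $\widehat{T}_hu_h=\mu_h^{(j)}u_h$. First I would set $f := u_h$ (the given discrete eigenfunction, normalised so $\|u_h\|_{0,\Omega}=1$) and introduce the auxiliary source solutions $w := Tu_h$ and $w_h := \widehat{T}_h u_h = T_h\mathcal{P}_h u_h$. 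These are precisely the quantities appearing on the right-hand side of \eqref{aux_pre_th}, so the goal reduces to estimating $\|u-u_h\|_{0,\Omega}$ in terms of $\|w-w_h\|$-type quantities.

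\textbf{Key steps.} I would proceed as follows. Since $u_h$ is an eigenfunction of $\widehat{T}_h$ with eigenvalue $\mu_h^{(j)}$, we have $w_h=\widehat{T}_h u_h=\mu_h^{(j)}u_h$, so $u_h = (\mu_h^{(j)})^{-1}w_h$. Likewise, writing $u$ for the continuous eigenfunction associated with $\mu$, we have $Tu=\mu u$. The natural decomposition is then
\begin{align*}
u-u_h = \big(u - (\mu_h^{(j)})^{-1}w\big) + (\mu_h^{(j)})^{-1}(w-w_h),
\end{align*}
where $w=Tu_h$. The second term is directly bounded by $(\mu_h^{(j)})^{-1}\|w-w_h\|_{0,\Omega}$, and since $\mu_h^{(j)}\to\mu\neq 0$ the factor is uniformly bounded; one then invokes Lemma~\ref{Lem410} to convert $\|w-w_h\|_{0,\Omega}$ into the three right-hand-side terms of \eqref{aux_pre_th} with the factor $h^s$. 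For the first term I would write $u-(\mu_h^{(j)})^{-1}w = u - (\mu_h^{(j)})^{-1}Tu_h$ and add and subtract $Tu=\mu u$: using $Tu=\mu u$ gives $u-(\mu_h^{(j)})^{-1}Tu_h = \mu^{-1}T(u-u_h) + \big(\mu^{-1}-(\mu_h^{(j)})^{-1}\big)Tu_h$, after which the eigenvalue convergence $|\mu-\mu_h^{(j)}|\lesssim\gamma_h\lesssim h^{\min\{s,1\}}$ from Theorems~\ref{gapr} and the gap estimate (bounding $\widehat\delta(R(E),R(E_h))$) control the remaining scalar and operator-difference contributions, each producing at least one power $h^s$.

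\textbf{Main obstacle.} The delicate point is the same one that always arises in eigenvalue duality arguments: the eigenfunction $u$ of $T$ is only determined up to the eigenspace $R(E)$, so I must first \emph{select} the correct representative $u$ before the estimate can hold. This is where the gap bound $\widehat{\delta}(R(E),R(E_h))\leq C\gamma_h$ is essential --- it guarantees that for the given $u_h$ there exists $u\in R(E)$ with $\|u-u_h\|_{0,\Omega}\lesssim\gamma_h$ to start the bootstrapping, and that the scalar factors $\mu^{-1}$, $(\mu_h^{(j)})^{-1}$ stay bounded away from singular values. The remaining work is to verify that every term carries the full factor $h^s$ (rather than a weaker $h^{\min\{s,1\}}$ that is absorbed) and that the higher-order contributions collapse into the three stated terms; this is largely bookkeeping once Lemma~\ref{Lem410} and the spectral convergence theorems are in hand, with the careful tracking of the $\mu^{-1}$ normalisation being the only real subtlety.
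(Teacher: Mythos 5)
Your route diverges from the paper's and contains two genuine gaps. First, you invoke Lemma~\ref{Lem410} with $f:=u_h$, but that lemma is stated (and proved) only for $f\in R(E)$: its proof uses the eigenfunction relation $w=Tf=\mu f$ to rewrite $b(f,v_I)-b_h(\mathcal{P}_hf,v_I)$ as a consistency term involving $\mathcal{P}_hf-\mu^{-1}\PiK w_h$, and this step is unavailable for $f=u_h\notin R(E)$; at best you have the discrete relation $\widehat{T}_hu_h=\mu_h^{(j)}u_h$, which would force you to re-prove a modified lemma rather than cite the existing one. Second, your decomposition is circular in its first term: $\|\mu^{-1}T(u-u_h)\|_{0,\O}$ can only be bounded by $\mu^{-1}\|T\|\,\|u-u_h\|_{0,\O}$, and since the operator norm of $T$ (equivalently $T^0$ on $\LO$) equals the largest eigenvalue $\mu_1\ge\mu$, the constant $\mu^{-1}\|T\|\ge 1$, so this term cannot be absorbed into the left-hand side. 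If you instead bound it via the gap estimate, you only obtain $\|u-u_h\|_{0,\O}\lesssim\gamma_h\sim h^{\min\{s,1\}}$, a plain $O(h^s)$ bound that loses the product structure $h^s\bigl\{|w-w_h|_{2,\O}+\cdots\bigr\}$ asserted in \eqref{aux_pre_th}; that structure is exactly what Theorem~\ref{Teor4.9} and the reliability analysis require, since the $L^2$ term must there be absorbed as an asymptotically negligible multiple of the energy-type quantities, not merely be of order $h^s$.

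The paper avoids both problems by not hand-rolling the eigenfunction estimate at all: it applies the classical spectral approximation theory for compact operators to the pair $(T^0,\widehat{T}_h)$ acting on $\LO$, using the $L^2$ operator convergence \eqref{PropP1_Hat_T0}. That theory simultaneously selects the representative $u\in R(E)$ and yields $\|u-u_h\|_{0,\O}\le C\sup\bigl\{\|(T^0-\widehat{T}_h)f^0\|_{0,\O}:\ f^0\in R(E^0),\ \|f^0\|_{0,\O}=1\bigr\}$, after which Lemma~\ref{Lem410} is applied legitimately, with $f\in R(E)$, $w=Tf$ and $w_h=\widehat{T}_hf$ — consistent with the later choice $f=u$ in Theorem~\ref{Teor4.9}. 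Note finally that your choice $w:=Tu_h$, $w_h:=\mu_h^{(j)}u_h$ silently changes the meaning of the right-hand side of \eqref{aux_pre_th}: Theorem~\ref{Teor4.9} subsequently bounds precisely the $w$-quantities generated by data in the continuous eigenspace, so even if your estimate closed, it would not feed correctly into the rest of the argument.
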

\begin{proof}
From \eqref{PropP1_Hat_T0} and the classical theory for compact operators presented in \cite{BO} we know that $sp(\widehat{T}_h)$ converges to $sp(T^0)$. Moreover, from the relation between the eigenfunctions of $T$ and $T_h$ with those of $T^0$ and $\widehat{T}_h$, respectively, we obtain that $u_h\in R(E_h)$ and there exists $u\in R(E)$ such that \begin{align*}
||u-u_h ||_{0,\O}\leq C \sup\limits_{\substack{f^0\in R(E^0),||f^0 ||_{0,\O}=1}}||(T^0-\widehat{T}_h)f^0 ||_{0,\O}.
\end{align*} 
In addition, we have from Lemma~\ref{Lem410} that for each $f^0\in R(E^0)$ satisfying $f^0=f$ implies
\begin{align*}
||(T^0-\widehat{T}_h)f^0 ||_{0,\O}=||(T-\widehat{T}_h)f ||_{0,\O}\leq C h^s\Big\{|w-w_h|_{2,\Omega}+|w-\PiK w_h|_{2,h} +||w-\PiK u_h ||_{0,\Omega} \Big\}.
\end{align*}
Putting together about inequalities finish the proof.
\end{proof}

Now, we prove the $L^2$-norm error estimate for the eigenfunctions of the virtual spectral discretization analysed in this work.

\begin{theorem}\label{Teor4.9}
There exist $s\in(1/2,1]$ and a positive constant $C$ independent of parameter $h$ such that
\begin{align*}
||u-u_h ||_{0,\O}\leq Ch^s\Big\{|u-u_h|_{2,\O}+||u-\Pi_K^0 u_h ||_{0,\Omega}+|u-\PiK  u_h|_{2,h}\Big\}.
\end{align*}
\end{theorem}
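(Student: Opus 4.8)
The plan is to obtain the eigenfunction estimate for $(\mu_h^{(j)}, u_h)$ as a consequence of the abstract source-problem estimate already developed in Lemma~\ref{Lem410} and the spectral bound in Lemma~\ref{Lema4.12}. The essential observation is that the right-hand side of Lemma~\ref{Lema4.12} is phrased in terms of the source solutions $w = Tf$ and $w_h = \widehat{T}_h f$ for an appropriate $f \in R(E)$, whereas the desired conclusion is phrased directly in terms of the eigenfunctions $u$ and $u_h$. Thus the whole argument reduces to identifying $w$ with $u$ (up to a scalar) and $w_h$ with $u_h$, and then transferring the three terms on the right-hand side.

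First I would fix the eigenpair $(\mu_h^{(j)}, u_h)$ of $\widehat{T}_h$ with $\|u_h\|_{0,\Omega}=1$ and invoke Lemma~\ref{Lema4.12} to produce the companion eigenfunction $u \in R(E)$ of $T$ associated with $\mu$, so that
\begin{equation*}
\|u-u_h\|_{0,\Omega} \le C h^s\Big\{|w-w_h|_{2,\Omega} + \|w-\Pi_K^0 w_h\|_{0,\Omega} + |w-\PiK w_h|_{2,h}\Big\}.
\end{equation*}
Here $f\in R(E)$ is the source datum chosen in the proof of Lemma~\ref{Lema4.12}, with $w=Tf$ and $w_h=\widehat{T}_h f = T_h\mathcal{P}_h f$. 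The key algebraic step is that, since $u$ is an eigenfunction with $Tu = \mu u$, choosing $f = \mu^{-1} u$ (equivalently normalising so that $w = Tf = u$) identifies $w$ with the eigenfunction $u$ exactly. In parallel, because $u_h$ is an eigenfunction of $\widehat{T}_h$, one has $\widehat{T}_h u_h = \mu_h^{(j)} u_h$, and with the matching normalisation the discrete source solution $w_h$ coincides with $u_h$ (up to the same scalar that is absorbed into the generic constant $C$). Substituting these identifications termwise converts $|w-w_h|_{2,\Omega}$, $\|w-\Pi_K^0 w_h\|_{0,\Omega}$, and $|w-\PiK w_h|_{2,h}$ into $|u-u_h|_{2,\Omega}$, $\|u-\Pi_K^0 u_h\|_{0,\Omega}$, and $|u-\PiK u_h|_{2,h}$, respectively, which is precisely the claimed estimate.

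The main obstacle I expect is the bookkeeping of normalisations and scalar factors when passing from the source-operator picture to the eigenfunction picture: the eigenfunctions are only determined up to scaling, and one must verify that the factors $\mu$ and $\mu_h^{(j)}$ introduced by writing $f = \mu^{-1} u$ (and the analogous discrete identity) are uniformly bounded in $h$, so they can be safely absorbed into $C$ without affecting the order $h^s$. This uses that $\mu_h^{(j)} \to \mu$ as $h\to 0$ (a consequence of Theorem~\ref{gapr} and the convergence $\widehat{T}_h \to T$ in Lemma~\ref{Lema4.11}), so that $\mu_h^{(j)}$ stays bounded away from both $0$ and $\infty$ for $h$ small. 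Once this uniform control of the spectral scalars is secured, the three terms transfer verbatim and the constant remains $h$-independent, completing the proof. The remaining manipulations are routine and require no further ingredients beyond Lemma~\ref{Lem410} and Lemma~\ref{Lema4.12}.
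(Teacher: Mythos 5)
Your reduction to Lemma~\ref{Lema4.12} is the right starting point, and choosing the datum $f$ proportional to the continuous eigenfunction $u$ (so that $w=Tf$ is exactly a multiple of $u$) matches the paper. But the parallel identification of the discrete side is where your argument breaks: you claim that ``with the matching normalisation the discrete source solution $w_h$ coincides with $u_h$ (up to a scalar)''. This is false. With $f$ proportional to $u$, one has $w_h=\widehat{T}_hf=T_h\mathcal{P}_hf$, and $u$ is an eigenfunction of $T$, \emph{not} of $\widehat{T}_h$; hence $\widehat{T}_hu$ is in general not parallel to $u_h$, and no normalisation makes $w_h$ a scalar multiple of $u_h$. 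The fact that $\widehat{T}_hu_h=\mu_h^{(j)}u_h$ is irrelevant here, because the same datum $f$ must be used on both sides of Lemma~\ref{Lem410}. The discrepancy $w_h-\lambda_h^{-1}u_h$ is a genuine function-valued error of the same order as the quantities being estimated, and bounding it is precisely the content of the paper's proof: writing $w=\lambda^{-1}u$, the paper splits $w-w_h$ into $\lambda^{-1}(u-u_h)$, $(\lambda^{-1}-\lambda_h^{-1})u_h$, and $\lambda_h^{-1}u_h-w_h$; the middle term is controlled by the double-rate eigenvalue estimate \eqref{double_conv_2} (not merely by boundedness of $\mu_h^{(j)}$, as you suggest), and the last term is controlled via the $V_h$-ellipticity of $a_h$ applied to the identity $a_h(w_h-\lambda_h^{-1}u_h,v_h)=b_h(\mathcal{P}_hu-u_h,v_h)$, which yields $|w_h-\lambda_h^{-1}u_h|_{2,\O}\leq C|u-u_h|_{2,\O}$.

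A second, consequent omission: even granting control of $|w-w_h|_{2,\O}$, the remaining two terms on the right-hand side of \eqref{aux_pre_th} do not ``transfer verbatim'' as you assert, since $\Pi_K^0w_h$ and $\PiK w_h$ are projections of $w_h$, not of $u_h$, and $w_h\neq c\,u_h$. The paper needs additional triangle inequalities and the stability of the projections to convert $\|w-\Pi_K^0w_h\|_{0,\Omega}$ and $|w-\PiK w_h|_{2,h}$ into the stated quantities $\|u-\Pi_K^0u_h\|_{0,\Omega}$ and $|u-\PiK u_h|_{2,h}$ (again passing through $|w_h-\lambda_h^{-1}u_h|$). In short, if your identification were valid, Theorem~\ref{Teor4.9} would be an immediate corollary of Lemma~\ref{Lema4.12}; the entire body of the paper's proof exists to estimate exactly the discrepancy your proposal assumes away, so the proposal as written has a substantive gap.
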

\begin{proof}
The proof proceeds by bounding each term on the right-hand side of  \eqref{aux_pre_th} in  Lemma~\ref{Lema4.12}. We first consider $w\in V$ and $w_h\in V_h$ as the solutions of problems
\begin{align}
a(w,v)=b(u,v), \quad
a_h(w_h,v_h)=b_h(\mathcal{P}_hu,v_h), \quad \forall v\in V,\, \forall v_h \in V_h \label{aux_disc_prob_apost},
\end{align}
respectively. Since $b(u,v)=\frac{1}{\lambda}a(u,v)$ (cf. Problem~\ref{WMPr}) we get $w=\frac{1}{\lambda}
u$ with $\lambda\neq 0$ . Thus, 
\begin{align}
|w-w_h|_{2,\O}&\leq \lambda^{-1}|u-u_h|_{2,\O}+|\lambda^{-1}-\lambda_h^{-1}||u_h|_{2,\O}+|\lambda^{-1}u_h-w_h|.\label{aux_1ap} 
\end{align}
Note that, from estimate~\ref{double_conv_2} we obtain
\begin{align}
|\lambda^{-1}-\lambda_h^{-1}|=|\lambda\lambda_{h}^{-1}||\lambda-\lambda_h|\leq C\Big( ||u-u_h||_{2,\Omega}^2 + ||u-\Pi_K^0 u_h||_{0,\Omega}^2 + |u- \PiK u_h|_{2,h}^2 \Big)\label{est_chu}.
\end{align}

On the other hand, we know that $a_h(\lambda^{-1}u_h,v_h)=b_h(u_h,v_h)\ \forall v_h \in V_h $ (cf. Problem~\ref{DiscVP}). Hence, subtracting this equality from \eqref{aux_disc_prob_apost} we obtain 
\begin{align*}
a_h(w_h-\lambda^{-1}u_h,v_h)=b_h(\mathcal{P}_hu-u_h,v_h) \ \forall v_h \in V_h .
\end{align*}
Therefore, using the fact that $a_h(\cdot,\cdot)$ is $V_h-$elliptic, we can deduce
\begin{align}
|w_h-\lambda^{-1}u_h|_{2,\O}^2&=a_h(w_h-\lambda^{-1}u_h,w_h-\lambda^{-1}u_h)=Cb_h(\mathcal{P}_hu-u_h,w_h-\lambda^{-1}u_h)\nonumber\\
&\leq C || \mathcal{P}_hu-u_h||_{0,\O}||w_h-\lambda^{-1}u_h ||_{0,\O}\leq C || \mathcal{P}_h(u-u_h)||_{0,\O}||w_h-\lambda^{-1}u_h ||_{0,\O}\nonumber\\
&\leq C ||u-u_h||_{0,\O} ||w_h-\lambda^{-1}u_h ||_{2,\O}\leq C ||u-u_h||_{2,\O} |w_h-\lambda^{-1}u_h |_{2,\O}\nonumber\\
&\leq C |u-u_h|_{2,\O} |w_h-\lambda^{-1}u_h |_{2,\O},\nonumber
\end{align}
which implies that
\begin{align}
|w_h-\lambda^{-1}u_h|_{2,\O}&\leq C |u-u_h|_{2,\O} .\label{cua_chu}
\end{align}
Then, replacing \eqref{est_chu} and \eqref{cua_chu} in the right-hand side of \eqref{aux_1ap} lead to
\begin{align}
|w-w_h|_{2,\O}\leq C\Big( ||u-u_h||_{2,\Omega} + ||u-\Pi_K^0 u_h||_{0,\Omega} + |u- \PiK u_h|_{2,h}\Big)\label{1stToBound}.
\end{align}

To bound the second term on the right-hand side of \eqref{aux_pre_th} we apply the triangle inequality and we have
\begin{align}
||w-\Pi_K^0 w_h||_{0,\Omega}&\leq |w-w_h|_{2,\O}+||w_h-\Pi_K^0  w_h||_{0,\Omega}\label{like_c.vs_t}.
\end{align}
Next, for the second term in \eqref{like_c.vs_t}, we obtain
\begin{align}
||w_h-\Pi_K^0  w_h||_{0,\Omega}&\leq ||w_h-\lambda_h^{-1}u_h||_{0,\O}+\lambda_h^{-1}||u_h-\Pi_K^0 u_h||_{0,\Omega}+||\Pi_K^0(\lambda_h^{-1}u_h-w_h)||_{0,\Omega}\nonumber\\
&\leq 2|w_h-\lambda_h^{-1}u_h|_{2,\O}+\lambda_h^{-1}||u-u_h||_{0,\O}+\lambda_h^{-1}||u-\Pi_K^0 u_h||_{0,\Omega}\nonumber\\
&\leq C\Big\{||u-u_h||_{0,\O}+||u-\Pi_K^0 u_h||_{0,\Omega}\Big\}\label{car_fel}.
\end{align}
Thus, replacing  \eqref{1stToBound} and \eqref{car_fel} in \eqref{like_c.vs_t} imply that
\begin{align}
||w-\Pi_K^0  w_h||_{0,\Omega}\leq C\Big\{|u-u_h|_{2,\O}+||u-\Pi_K^0  u_h||_{0,\Omega}+|u-\PiK  u_h|_{2,h} \Big\}\label{2ndTermToBound}.
\end{align}

The third term on the right-hand side of \eqref{aux_pre_th} is bounded by following the same arguments as those applied to obtain \eqref{2ndTermToBound}. Therefore,
\begin{align}
|w-\PiK w_h|_{2,h}&\leq |w-w_h|_{2,\Omega}+|w_h-\PiK   w_h|_{2,h}=|w-w_h|_{2,\Omega}+\sum\limits_{K\in \Omega_h}|w_h-\PiK   w_h|_{2,K}\nonumber\\
&\leq C\Big\{|u-u_h|_{2,\Omega}+ ||u-u_h||_{2,\O}+|u-\PiK u_h|_{2,h}\Big\}\label{3TermToBound}
\end{align}
Finally, substituting  \eqref{1stToBound}, \eqref{2ndTermToBound} and \eqref{3TermToBound} on the right-hand side of \eqref{aux_pre_th} concludes the proof.
\end{proof}

\setcounter{equation}{0}
\section{A posteriori error analysis}
\label{SEC:EST_A_POST}
This section is dedicated to constructing a residual-type estimator that depends only on quantities available from the VEM solution, and to establishing its reliability and efficiency. To achieve this, we first present some definitions and local estimates.

\begin{lemma}\label{lema4.2gatica2010analysis}
	Let $l,m\in \N\cup\{0\}$ such that $l\leq m$. Then, for any polygon $K$, there exists $C>0$, depending only on $k,l,m$ and the shape of $K$, such that
	\begin{equation}\label{eq4.4barrios2006residual}
	|q|_{m,K}\leq c h_K^{n-m}|q|_{n,K}\qquad \forall q\in \mathbb{P}_k(K).
	\end{equation}
\end{lemma}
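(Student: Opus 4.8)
The plan is to reduce the estimate to a fixed reference configuration by an affine change of variables and then exploit the finite dimensionality of $\mathbb{P}_k$, on which all (semi)norms are comparable. Since the asserted bound is an \emph{inverse} inequality (the lower-order index, which I shall write as $l$, controls the higher-order index $m\ge l$ at the price of a negative power of $h_K$), the argument is purely about how seminorms of polynomials rescale together with a dilation.

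First I would fix, for each polygon $K$, the reference point $x_K$ given by the centre of the ball provided by assumption {\bf A2}, and define the dilated polygon $\widehat{K}:=h_K^{-1}(K-x_K)$, which has unit diameter. To any $q\in\mathbb{P}_k(K)$ I associate $\widehat{q}(\widehat{x}):=q(x_K+h_K\widehat{x})\in\mathbb{P}_k(\widehat{K})$. A change of variables $x=x_K+h_K\widehat{x}$, together with the identity $D^{\alpha}_x q=h_K^{-|\alpha|}D^{\alpha}_{\widehat{x}}\widehat{q}$, then yields the scaling law
\begin{equation*}
|q|_{j,K}=h_K^{\,d/2-j}\,|\widehat{q}|_{j,\widehat{K}}\qquad\text{for every }j\in\N\cup\{0\},
\end{equation*}
where $d$ is the spatial dimension. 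Applying this for $j=m$ and $j=l$ shows that the claim is equivalent to the scale-free estimate $|\widehat{q}|_{m,\widehat{K}}\le C\,|\widehat{q}|_{l,\widehat{K}}$ on $\widehat{K}$.

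To prove the latter I would pass to the quotient space $\mathbb{P}_k/\mathbb{P}_{l-1}$ (with the convention $\mathbb{P}_{-1}=\{0\}$). Since $m\ge l$, both $|\cdot|_{m,\widehat{K}}$ and $|\cdot|_{l,\widehat{K}}$ annihilate $\mathbb{P}_{l-1}$ and hence descend to this finite-dimensional quotient; there $|\cdot|_{l,\widehat{K}}$ is a genuine norm, because $|\widehat{q}|_{l,\widehat{K}}=0$ forces $\widehat{q}\in\mathbb{P}_{l-1}$. As any seminorm on a finite-dimensional space is dominated by any norm, there exists $C=C(k,l,m,\widehat{K})$ with $|\widehat{q}|_{m,\widehat{K}}\le C\,|\widehat{q}|_{l,\widehat{K}}$, and combining with the scaling law gives $|q|_{m,K}\le C\,h_K^{\,l-m}|q|_{l,K}$. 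The main obstacle is controlling the dependence of $C$ on $\widehat{K}$: a priori it varies with the shape of $K$, which is precisely the dependence allowed by the statement. To upgrade it to a uniform constant across a mesh family one invokes assumptions {\bf A1}--{\bf A2}, which confine the reference polygons $\widehat{K}$ to a compact class of shapes (unit diameter, containing a ball of radius $C_{\Omega}$), so that a standard compactness argument bounds the equivalence constants uniformly.
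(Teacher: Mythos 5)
Your proof is correct, but note that the paper itself gives no proof of this lemma: it is stated as a known polynomial inverse estimate (the labels point to the standard references from which it is quoted), so there is no in-paper argument to match. What you supply is the classical and complete route: the affine rescaling $x=x_K+h_K\widehat{x}$ to a unit-diameter polygon $\widehat{K}$ with the scaling identity $|q|_{j,K}=h_K^{d/2-j}|\widehat{q}|_{j,\widehat{K}}$, followed by norm equivalence on the finite-dimensional quotient $\mathbb{P}_k/\mathbb{P}_{l-1}$, where $|\cdot|_{l,\widehat{K}}$ is a genuine norm because $\widehat{K}$ has nonempty interior, so vanishing $l$-th derivatives force $\widehat{q}\in\mathbb{P}_{l-1}$. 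You also correctly repaired the statement's typographical slip: the index $n$ in \eqref{eq4.4barrios2006residual} is evidently the lower index $l$, and the conclusion should read $|q|_{m,K}\leq C h_K^{l-m}|q|_{l,K}$ (with the trivial cases $m=l$ and $m>k$ included). Two remarks on your final step: since the lemma as stated allows the constant to depend on the shape of $K$, the quotient argument alone already yields $C=C(k,l,m,\widehat{K})$ and the compactness discussion is strictly optional; it matters only for the way the lemma is used later, where uniformity over the family $\{\Omega_h\}_h$ is needed. There your appeal to \textbf{A1}--\textbf{A2} is the right mechanism (\textbf{A1} bounds the number of vertices and the minimal edge length from below, \textbf{A2} provides a uniform interior ball, so the rescaled polygons range over a compact class of shapes), but the assertion that the equivalence constant depends on $\widehat{K}$ in a way that lets compactness conclude is stated rather than proved; that continuity-of-constants step is standard yet is the one point of your argument that is a sketch rather than a proof.
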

To construct a suitable residual-based error estimator, we precise the following lemma.
\begin{lemma}\label{eq-residual}
	For any $v\in  V $, the following result is obtained
	 \begin{align*}
	 a(e,v)	 &= \lambda b(u,v)-\lambda_h b( u_h,v) + \sum\limits_{K\in \Omega_h} \Big\{ a_K(\PiK u_h-u_h,v)+ \lambda_h  b_{K}( u_h-\PiK u_h,v) \Big\}\\
     &+ \sum\limits_{K\in \Omega_h}   \int_{K}\lambda_h\PiK u_hv  - \sum\limits_{K\in \Omega_h}\sum_{l\in \mathcal{E}_K\cap \mathcal{E}_{\O}}\int_l [\![(\nabla^2\PiK u_h )\boldsymbol{n}_K^l]\!]\cdot \nabla v.
	 \end{align*}
\end{lemma}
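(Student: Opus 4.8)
The plan is to start from the exact residual equation defining the error $e := u - u_h$ in the continuous bilinear form, namely $a(e,v) = a(u,v) - a(u_h,v)$ for an arbitrary test function $v \in V$, and then systematically rewrite each term using the continuous and discrete eigenvalue problems together with the local projection operator $\PiK$. First I would replace $a(u,v)$ via Problem~\ref{WMPr}, which gives $a(u,v) = \lambda b(u,v)$. The main work is to manipulate the term $a(u_h,v)$, which is \emph{not} directly computable; the strategy is to insert the polynomial projection by writing, element by element, $a(u_h,v) = \sum_{K} a_K(u_h,v)$ and splitting as $a_K(u_h,v) = a_K(\PiK u_h, v) + a_K(u_h - \PiK u_h, v)$. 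This isolates the computable projected part $\sum_K a_K(\PiK u_h,v)$, which will be integrated by parts, from the non-computable remainder $\sum_K a_K(u_h - \PiK u_h, v)$, which appears explicitly in the statement.

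The next step is integration by parts applied to the computable piece $\sum_K a_K(\PiK u_h,v) = \sum_K \int_K \nabla^2(\PiK u_h):\nabla^2 v$. Integrating by parts twice (or equivalently using the biharmonic Green's identity for the $\nabla^2 : \nabla^2$ pairing) produces a volumetric term involving $\Delta^2(\PiK u_h)$, which vanishes since $\PiK u_h \in \mathbb{P}_2(K)$ has zero fourth derivatives, plus boundary terms on $\partial K$. Summing over all elements, the interior edge contributions combine into the jump term $\sum_K \sum_{l \in \mathcal{E}_K \cap \mathcal{E}_\Omega} \int_l [\![(\nabla^2 \PiK u_h)\boldsymbol{n}_K^l]\!]\cdot\nabla v$ that appears with a minus sign in the statement; the edges on $\Gamma$ drop out because $v \in V$ satisfies the appropriate homogeneous boundary conditions (either $v \in H^2 \cap H_0^1$ or $v \in H_0^2$). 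I would also need to reintroduce a volumetric term to reconcile the projected Hessian pairing against the discrete eigenvalue relation—this is the source of the $\sum_K \int_K \lambda_h \PiK u_h\, v$ term.

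To close the argument, I would bring in the discrete eigenvalue equation. Problem~\ref{DiscVP} gives $a_h(u_h, v_h) = \lambda_h b_h(u_h, v_h)$, but since $v \in V$ is not in $V_h$ this cannot be used directly; instead the $\lambda_h b(u_h, v)$ and $\lambda_h b_K(u_h - \PiK u_h, v)$ terms are assembled by adding and subtracting $\lambda_h b_K(\PiK u_h, v)$ element-wise, so that the projected mass contribution $\sum_K \int_K \lambda_h \PiK u_h\, v$ pairs against the integrated-by-parts part while the consistency defect $\lambda_h b_K(u_h - \PiK u_h, v)$ is retained explicitly. Collecting all surviving terms—the continuous residual $\lambda b(u,v) - \lambda_h b(u_h,v)$, the two element-local projection defects, the projected mass term, and the jump term—yields precisely the claimed identity.

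\textbf{The main obstacle} I anticipate is the careful bookkeeping in the integration by parts and the matching of signs and boundary contributions. Specifically, one must verify that the double integration by parts of $\nabla^2(\PiK u_h):\nabla^2 v$ genuinely eliminates all non-edge boundary terms (normal-normal and normal-tangential moment terms) using the regularity $v \in H^2$ and the boundary conditions encoded in $V$, and that the tangential derivative contributions across shared edges cancel so that only the normal Hessian jump $[\![(\nabla^2 \PiK u_h)\boldsymbol{n}_K^l]\!]$ survives. Getting the add-and-subtract decomposition to produce exactly the stated combination—rather than leaving stray computable-but-unwanted terms—requires treating the projected and non-projected parts of both $a_K$ and $b_K$ consistently; this algebraic reorganisation, though elementary, is where an error in a coefficient or a dropped boundary term is most likely to occur.
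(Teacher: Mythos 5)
Your proposal is correct and follows essentially the same route as the paper's proof: adding and subtracting $\lambda_h b(u_h,v)$, inserting $\PiK u_h$ element-wise into both $a_K$ and $b_K$, integrating the projected Hessian pairing by parts (with the volumetric and $\mathrm{div}\,\nabla^2\PiK u_h$ contributions vanishing because $\PiK u_h\in\mathbb{P}_2(K)$), and assembling the element boundary terms into interior-edge jumps while the terms $\lambda_h b_K(\PiK u_h,v)=\int_K\lambda_h\PiK u_h\,v$ and the two projection defects are retained exactly as in the statement. The boundary-edge cancellation via the conditions encoded in $V$, which you flag as the main obstacle, is likewise handled implicitly in the paper by restricting the jump sum to $\mathcal{E}_K\cap\mathcal{E}_{\O}$.
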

\begin{proof}
Given any $v\in  V $ and using the fact that $(\lambda,u)$ solves the problem~\eqref{WMPr} and integrating by parts, one has
\begin{align*}
&a(e,v)
=\lambda b(u,v)-\lambda_h b(u_h,v) + \lambda_h b(u_h,v)-a(u_h,v)\nonumber\\
&= \lambda b(u,v)-\lambda_h b(u_h,v)  + \sum\limits_{K\in \Omega_h} \Big\{a_{K} (\PiK u_h-u_h,v) + \lambda_h  b_{K}(u_h,v) -  a_{K}(\PiK u_h,v)\Big\}\nonumber\\
&= \lambda b(u,v)-\lambda_h b(u_h,v)  + \sum\limits_{K\in \Omega_h} \Big\{a_{K} (\PiK u_h-u_h,v) + \lambda_h  b_{K}(u_h-\PiK u_h,v)\nonumber\\
&\quad + \lambda_h  b_{K}(\PiK u_h,v)\Big\} - \sum\limits_{K\in \Omega_h} \Bigg\{ \int_{\partial K} \Big[\nabla^2(\PiK u_h)\boldsymbol{n}_{K}\Big]\cdot \nabla v - \int_{\partial K}v \Big[ \mathrm{div} \nabla^2 \PiK u_h\Big]\cdot \boldsymbol{n}_K \Bigg\} \nonumber\\
&= \lambda b(u,v)-\lambda_h b( u_h,v) + \sum\limits_{K\in \Omega_h} \Big\{ a_K(\PiK u_h-u_h,v)+ \lambda_h  b_{K}( u_h-\PiK u_h,v) \Big\}\\
&\quad + \sum\limits_{K\in \Omega_h}   \int_{K}\lambda_h\PiK u_hv  - \sum\limits_{K\in \Omega_h}\sum_{l\in \mathcal{E}_K\cap \mathcal{E}_{\O}}\int_l [\![(\nabla^2\PiK u_h )\boldsymbol{n}_K^l]\!]\cdot \nabla v.\nonumber
\end{align*}
\end{proof}
We define the following local error, volume, jump, and stabilisation estimators for all $K\in \Omega_h$.
\begin{gather*}
\eta_K^2 := \Xi_K^2 + \sum_{l\in \mathcal{E}_K\cap \mathcal{E}_{\O}}  \mathcal{J}_{l}^2 + S_K^2,\\
\Xi_{K}^2 := h_K^4 ||\lambda_h\PiK u_h ||_{0,K}^2, \quad  \mathcal{J}_{l}^2 := h_{l}
||[\![(\nabla^2\PiK u_h )\boldsymbol{n}_K^{l}]\!] ||_{0,l}^2,\\
S_{K}^2
:= s_K^{\Delta}(u_h-\PiK u_h,u_h-\PiK u_h) + s_{K}^0(u_h-\Pi_K^0 u_h,u_h-\Pi_K^0 u_h)
\end{gather*}
In addition, we note that $\mathcal{J}_l$ is computable only on the basis of the output values of the operators in ${\bf D_2}$. Finally, our respective global error, volume, jump, and stabilisation estimators are defined as follows:
\begin{align}\label{global_estimators}
\eta^2:=\sum\limits_{K\in \Omega_h} \eta_K^2, \quad \Xi^2 := \sum\limits_{K\in \Omega_h} \Xi_K^2, \quad \mathcal{J}^2 := \sum\limits_{K\in \Omega_h} \sum_{l\in \mathcal{E}_K\cap \mathcal{E}_{\O}} \mathcal{J}_{l}^2, \quad \text{and} \quad S^2 := \sum\limits_{K\in \Omega_h} S_K^2.
\end{align}

\subsection{Reliability}\label{Reliablity}
The following result is important to establish the reliability of the a posteriori error estimator.
\begin{theorem}\label{Th5.6}
	There exists a positive constant $C$ independent of parameter $h$ such that
	\begin{equation}
		|u-u_h|_{2,\Omega}\leq C \Big\{ \eta + \frac{\lambda+\lambda_h}{2}||u-u_h||_{0,\O} \Big\}
	\end{equation}
\end{theorem}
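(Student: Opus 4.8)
The plan is to pair the coercivity of $a(\cdot,\cdot)$ with the residual identity of Lemma~\ref{eq-residual}, tested against the error itself, and then to localise the computable part of the residual through a $C^1$-conforming quasi-interpolant. Writing $e:=u-u_h$, the ellipticity estimates \eqref{ellip_aSSP}--\eqref{ellip_aCP} (and the induced equivalence $|\cdot|_{2,\Omega}\simeq\|\cdot\|_{2,\Omega}$ on $V$) give $\alpha_{\Omega}\|e\|_{2,\Omega}^2\le a(e,e)$, so it suffices to bound $a(e,e)$ from above. Taking $v=e$ in Lemma~\ref{eq-residual} splits $a(e,e)$ into the \emph{data term} $\lambda b(u,e)-\lambda_h b(u_h,e)$ and the \emph{computable residual} $R[e]:=\sum_{K}\{a_K(\PiK u_h-u_h,e)+\lambda_h b_K(u_h-\PiK u_h,e)\}+\sum_{K}\int_K\lambda_h\PiK u_h\,e-\sum_{K}\sum_{l\in\mathcal{E}_K\cap\mathcal{E}_{\O}}\int_l[\![(\nabla^2\PiK u_h)\boldsymbol{n}_K^l]\!]\cdot\nabla e$.

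For the data term I would use the symmetric rewriting $\lambda b(u,e)-\lambda_h b(u_h,e)=\frac{\lambda+\lambda_h}{2}b(e,e)+\frac{\lambda-\lambda_h}{2}b(u+u_h,e)$. Since $b(u+u_h,e)=\|u\|_{0,\Omega}^2-\|u_h\|_{0,\Omega}^2$, this last contribution vanishes under the standard eigenfunction normalisation $\|u\|_{0,\Omega}=\|u_h\|_{0,\Omega}$ (alternatively it is absorbed as a higher-order term via $|\lambda-\lambda_h|\lesssim h^{2s}$ from \eqref{double_conv_1}), leaving precisely $\frac{\lambda+\lambda_h}{2}\|e\|_{0,\Omega}^2\le\frac{\lambda+\lambda_h}{2}\|e\|_{0,\Omega}\|e\|_{2,\Omega}$.

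The residual $R[e]$ I would treat by linearity as $R[e]=R[e-e_I]+R[e_I]$, where $e_I\in V_h$ is a quasi-interpolant of $e$. Using that $e_I\in V_h$ together with the discrete equation $a_h(u_h,e_I)=\lambda_h b_h(u_h,e_I)$ of Problem~\ref{DiscVP}, one checks that $R[e_I]=[a_h(u_h,e_I)-a(u_h,e_I)]+\lambda_h[b(u_h,e_I)-b_h(u_h,e_I)]$; by the consistency and stability relations of Lemma~\ref{consandstability} and the definition of $S_K$, each bracket is controlled by $C\,S\,\|e\|_{2,\Omega}$. For $R[e-e_I]$ I would estimate the three summands by Cauchy--Schwarz combined with interpolation and scaled trace inequalities: the element term is bounded by $\sum_K\|\lambda_h\PiK u_h\|_{0,K}\|e-e_I\|_{0,K}\lesssim\sum_K\Xi_K|e|_{2,K}\le\Xi\,\|e\|_{2,\Omega}$ via $\|e-e_I\|_{0,K}\lesssim h_K^2|e|_{2,K}$; the projection/stabilisation term is bounded by $C\,S\,\|e\|_{2,\Omega}$ using $|u_h-\PiK u_h|_{2,K}\lesssim S_K$ and the stability of $e_I$; and the jump term is bounded by $\sum_l\|[\![(\nabla^2\PiK u_h)\boldsymbol{n}_K^l]\!]\|_{0,l}\|\nabla(e-e_I)\|_{0,l}\lesssim\sum_l\mathcal{J}_l|e|_{2,\omega_l}\le\mathcal{J}\,\|e\|_{2,\Omega}$, where the scaled trace estimate $\|\nabla(e-e_I)\|_{0,l}\lesssim h_l^{1/2}|e|_{2,\omega_l}$ (with $\omega_l$ the element patch of $l$) furnishes exactly the weight $h_l^{1/2}$ in $\mathcal{J}_l$. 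Collecting these bounds and invoking $\Xi+\mathcal{J}+S\lesssim\eta$ from \eqref{global_estimators}, I obtain $a(e,e)\lesssim\eta\,\|e\|_{2,\Omega}+\frac{\lambda+\lambda_h}{2}\|e\|_{0,\Omega}\|e\|_{2,\Omega}$; dividing by $\|e\|_{2,\Omega}$ and using $|e|_{2,\Omega}\le\|e\|_{2,\Omega}$ yields the claim.

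The step I expect to be the main obstacle is the existence of a dimension-independent, $C^1$-conforming quasi-interpolation operator into $V_h$ that simultaneously delivers the local $L^2$ bound $\|e-e_I\|_{0,K}\lesssim h_K^2|e|_{2,K}$, the gradient-trace bound $\|\nabla(e-e_I)\|_{0,l}\lesssim h_l^{1/2}|e|_{2,\omega_l}$ and $H^2$-stability for a generic $e\in V$: unlike the eigenfunctions, $e=u-u_h$ is only $H^2$-regular, so Proposition~\ref{app2} does not apply directly and a Cl\'ement/Scott--Zhang-type construction on star-shaped polytopes under {\bf A1}--{\bf A2} is required, together with the finite-overlap property of the patches $\omega_l$ (and the fact that the boundary edges carry no contribution, already encoded in Lemma~\ref{eq-residual}). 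The remaining, eigenvalue-specific subtlety is the disposal of $\frac{\lambda-\lambda_h}{2}(\|u\|_{0,\Omega}^2-\|u_h\|_{0,\Omega}^2)$, handled by the normalisation noted above.
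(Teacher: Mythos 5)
Your proposal is correct and follows essentially the same route as the paper's proof: testing the residual identity of Lemma~\ref{eq-residual} with $e=u-u_h$, splitting off the interpolant $e_I$ (your identity $R[e_I]=[a_h(u_h,e_I)-a(u_h,e_I)]+\lambda_h[b(u_h,e_I)-b_h(u_h,e_I)]$ is exactly the paper's rearrangement in \eqref{err_ap}), bounding $E_1$--$E_4$ by $S$, $\Xi$ and $\mathcal{J}$ via the same interpolation and scaled trace estimates, and handling the data term through the normalisation $\|u\|_{0,\Omega}=\|u_h\|_{0,\Omega}=1$, which is precisely the paper's appeal to \cite[Lemma~3.4]{MRR2015} yielding $\frac{\lambda+\lambda_h}{2}\|e\|_{0,\Omega}^2$. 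Your closing caveat is well taken: the paper invokes Proposition~\ref{app2} to produce $e_I$ even though $e$ is only $H^2$-regular, so the Cl\'ement/Scott--Zhang-type quasi-interpolant you call for is indeed needed (and tacitly assumed) in the paper's own estimate \eqref{est_pol_error}.
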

\begin{proof}
Let $e=u-u_h\in  V_h  \subseteq  V $, then there exists $e_I\in  V_h$ (cf. Proposition~\ref{app2}) such that
\begin{equation}\label{est_pol_error}
		|e-e_I |_{t,\O}\leq Ch^{s-t}|e |_{s,\O},\quad s=2,3, \quad t=0,1,...,s	
	\end{equation}
From Lemma~\ref{eq-residual}, we have that
\begin{align}
&|u-u_h|_{2,\O}^2
=a(e,e-e_I)+a(u,e_I)-a_h(u_h,e_I)+a_h(u_h,e_I)-a(u_h,e_I)\nonumber\\
&= \lambda b(u,e-e_I)-\lambda_h b( u_h,e-e_I) +a(u,e_I)-\lambda_h b_h(u_h,e_I)+a_h(u_h,e_I)-a(u_h,e_I) \nonumber \\
&\quad + \sum\limits_{K\in \Omega_h} \int_{K}\lambda_h\PiK u_h(e-e_I) +\sum\limits_{K\in \Omega_h}\sum_{l\in \mathcal{E}_K\cap \mathcal{E}_{\O}}\int_l - [\![(\nabla^2\PiK u_h )\boldsymbol{n}_K^l]\!] \cdot \nabla (e-e_I)\nonumber\\
&\quad +\sum\limits_{K\in \Omega_h} a_K(\PiK u_h-u_h,e-e_I)+ \lambda_h\sum\limits_{K\in \Omega_h} b_{K}( u_h-\PiK u_h,e-e_I)\nonumber\\
&=\Big\{ \lambda b(u,e)-\lambda_h b(u_h,e)\Big\}+\lambda_h\Big\{b(u_h,e_I)-b_h(u_h,e_I) \Big\}+\Big\{ a_h(u_h,e_I)-a(u_h,e_I)\Big\}+\sum\limits_{j=1}^{4}E_j,\label{err_ap}
\end{align}
where the terms $E_j$ ($j=1,\dots,4$) are given by: 	
    \begin{align*}
        \begin{aligned}
			\disp E_1&:=\sum\limits_{K\in \Omega_h} a_K(\PiK u_h-u_h,e-e_I),\quad \disp &&E_2:=\lambda_h\sum\limits_{K\in \Omega_h} b_{K}( u_h-\PiK u_h,e-e_I),\\
			\disp E_3&:=\sum\limits_{K\in \Omega_h} \int_{K}\lambda_h\PiK u_h(e-e_I),\quad  \disp &&E_4:=\sum\limits_{K\in \Omega_h}\sum_{l\in \mathcal{E}_K\cap \mathcal{E}_{\O}}\int_l - [\![(\nabla^2\PiK u_h )\boldsymbol{n}_K^l]\!] \cdot \nabla (e-e_I).
        \end{aligned}
\end{align*}
	
Now, let us find upper bounds on each term in \eqref{err_ap}. In fact, if $(\lambda,u)\neq (0,0)$ is an eigenpair of $T$ with $\lambda$ a simple eigenvalue, we select $|| w||_{0,\O}=1$ so that for each mesh $\Omega_h$, the solution $(\lambda_h,u_h)$ of the Problem~\ref{DiscVP} satisfy $||u_h ||_{0,\O}=1$ and $|\lambda-\lambda_h|,|u-u_h |_{2,\O}\to0$. Thus, following \cite[Lemma~3.4]{MRR2015}, we have   
\begin{align}
\lambda b(u,e)-\lambda_h b_h( u_h,e)&= \lambda ||u ||_{0,\O}^2 +\lambda_h ||u_h ||_{0,\O}^2-\lambda_h b(u,u_h)\nonumber\\
&=C\frac{\lambda + \lambda_h}{2}|| e||_{0,\O}^2\leq C\frac{\lambda + \lambda_h}{2}|| e||_{0,\O}|e|_{2,\O}.\label{err_ap2}
\end{align}

On the other hand, using that $b(\cdot,\cdot) ,b_{h,K}(\cdot,\cdot)$ are scalar products, Lemma~\ref{consandstability}, and \eqref{est_pol_error}, provide the following bound 
	\begin{align}
		\lambda_h\Big\{b(u_h,e_I)-b_h(u_h,e_I) \Big\}&=\lambda_h\sum\limits_{K\in \Omega_h} \Big\{b_K(u_h,e_I)-b_{h,K}(u_h,e_I) \Big\}\nonumber\\
	&\leq |\lambda_h| \sum\limits_{K\in \Omega_h}\Bigg\{ b_K(u_h-\PiK u_h,u_h-\PiK u_h)^{1/2}b_K(e_I,e_I)^{1/2} \nonumber\\
    &\quad + \, b_{h,K}(u_h-\PiK u_h,u_h-\PiK u_h)^{1/2}b_{h,K}(e_I,e_I)^{1/2}\Bigg\}\nonumber\\
	&\leq  C\Big\{\sum\limits_{K\in \Omega_h} b_{h,K}(u_h-\PiK u_h,u_h-\PiK u_h)\Big\}^{1/2}|| e_I||_{0,\O}\nonumber\\
        &\leq C\Big\{\sum\limits_{K\in \Omega_h} b_{h,K}(u_h-\PiK u_h,u_h-\PiK u_h)\Big\}^{1/2}| e|_{2,\O}.\label{err_ap3}
	\end{align}
Following the same arguments, we obtain
	\begin{align}
		a_h(u_h,e_I)-a(u_h,e_I)\leq C\Big\{\sum\limits_{K\in \Omega_h} a_K(u_h-\PiK u_h,u_h-\PiK u_h)\Big\}^{1/2}| e|_{2,\O}.\label{err_ap4}
	\end{align}
	
Next, \eqref{est_pol_error} and Lemma~\ref{consandstability} imply that 
	\begin{align}
		E_1&\leq \sum\limits_{K\in \Omega_h} a_K(u_h-\PiK u_h,u_h-\PiK u_h)^{1/2}|e|_{2,K}\nonumber\\
		&= \sum\limits_{K\in \Omega_h} s_K^{\Delta}(u_h-\PiK u_h,u_h-\PiK u_h)^{1/2}|e|_{2,K} \nonumber\\
        &\leq  C\Big(\sum\limits_{K\in \Omega_h} S_{K}^2\Big)^{1/2} |e|_{2,\O}.\label{err_ap5}
	\end{align}
Similarly,  \eqref{est_pol_error} and Lemma~\ref{consandstability} lead to
	\begin{align}
	E_2&=\sum\limits_{K\in \Omega_h} b_K(\PiK u_h-u_h,e-e_I)\leq  C\Big(\sum\limits_{K\in \Omega_h} S_{K}^2\Big)^{1/2} |e|_{2,\O}.\label{err_ap6}
	\end{align}
To bound $E_3$, we apply the Cauchy-Schwarz inequality and \eqref{est_pol_error} to obtain 
	\begin{align}
	E_3&\leq  \sum\limits_{K\in \Omega_h} ||\lambda_h\PiK u_h ||_{0,K}||e-e_I||_{0,K}\nonumber\\
		&\leq C\Big( \sum\limits_{K\in \Omega_h} h_K^4||\lambda_h\PiK u_h ||_{0,K}^2 \Big)^{1/2}||e||_{2,\O}\nonumber\\
        &\leq C\Big( \sum\limits_{K\in \Omega_h} h_K^4||\lambda_h\PiK u_h ||_{0,K}^2 \Big)^{1/2}|e|_{2,\O}\nonumber\\
        &=C\Big( \sum\limits_{K\in \Omega_h} \Xi_K^2 \Big)^{1/2}|e|_{2,\O}\label{err_ap7}
	\end{align}
For $E_4$, the trace inequality for $\nabla (e-e_I)\in H^1(K)$ and \eqref{est_pol_error} provide that
	\begin{align}
		E_4&\leq \sum\limits_{K\in \Omega_h}\sum_{l\in \mathcal{E}_K\cap \mathcal{E}_{\O}} \Big|\Big|[\![(\nabla^2\PiK u_h )\boldsymbol{n}_K^l]\!]  \Big|\Big|_{0,l} || \nabla (e-e_I)||_{0,K}\nonumber\\
	&\leq\sum\limits_{K\in \Omega_h} \sum_{l\in \mathcal{E}_K\cap \mathcal{E}_{\O}} \Big|\Big|[\![(\nabla^2\PiK u_h )\boldsymbol{n}_K^l]\!]  \Big|\Big|_{0,l} \Big\{h_K^{-1/2}|| \nabla(e-e_I)||_{0,K} ||+h_K^{1/2}|\nabla(e-e_I)|_{1,K}\Big\}\nonumber\\
		&\leq \sum\limits_{K\in \Omega_h}\sum_{l\in \mathcal{E}_K\cap \mathcal{E}_{\O}} h_K^{1/2}\Big|\Big|[\![(\nabla^2\PiK u_h )\boldsymbol{n}_K^l]\!]  \Big|\Big|_{0,l}||\nabla e||_{1,K}\nonumber\\
        &\leq C \Bigg\{\sum\limits_{K\in \Omega_h}\sum_{l\in \mathcal{E}_K\cap \mathcal{E}_{\O}} h_K\Big|\Big|[\![(\nabla^2\PiK u_h )\boldsymbol{n}_K^l]\!]  \Big|\Big|_{0,l}^{2}\Bigg\}^{1/2}  ||\nabla e||_{1,\Omega}\nonumber\\
        &= \Bigg\{\sum\limits_{K\in \Omega_h}\sum_{l\in \mathcal{E}_K\cap \mathcal{E}_{\O}} \mathcal{J}_{l}^2\Bigg\}||e||_{2,\Omega} \leq  \mathcal{J}^2||e||_{2,\Omega}\leq  C_{P}\mathcal{J}^2|e|_{2,\Omega},\label{err_ap8}
	\end{align}
where, in the last step we used the Poincaré inequality.

Finally, the proof finishes by replacing \eqref{err_ap2}-\eqref{err_ap8} in \eqref{err_ap} and using the Cauchy-Schwarz inequality. 
\end{proof}

The following result is a consequence of Theorem~\ref{Th5.6}.
\begin{corollary}\label{coroll_MG}
There exists a positive constant $C$ independent of the parameter $h$ such that
\begin{align*}
||u-\Pi_K^0 u_h ||_{0,\Omega}+|u-\Pi_K^{\Delta} u_h|_{2,h}\leq C\Big\{\eta +\frac{\lambda+\lambda_h}{2}||u-u_h ||_{0,\O} \Big\}.
\end{align*}
As a consequence, it holds
\begin{align*}
|u-u_h|_{2,\O}+||u-\Pi_K^0 u_h ||_{0,\Omega}+|u-\Pi_K^{\Delta} u_h|_{2,h}\leq C\Big\{\eta +\frac{\lambda+\lambda_h}{2}||u-u_h ||_{0,\O} \Big\}.
\end{align*}
\end{corollary}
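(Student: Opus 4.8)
The plan is to control the two projection terms by the triangle inequality, inserting $u_h$ together with its polynomial projections, and then to absorb the resulting pieces into the estimator $\eta$ and into the bound of Theorem~\ref{Th5.6}. For the Hessian term I write $|u-\PiK u_h|_{2,h}\le |u-u_h|_{2,\Omega}+|u_h-\PiK u_h|_{2,h}$, using that the broken seminorm coincides with $|\cdot|_{2,\Omega}$ on $H^2(\Omega)$. Since $a_K(v,v)=|v|_{2,K}^2$ and $u_h-\PiK u_h\in V_h^K\cap\ker(\PiK)$ (because $\PiK$ is idempotent), the lower stability bound $\alpha_* a_K(v_h,v_h)\le s_K^{\Delta}(v_h,v_h)$ yields $|u_h-\PiK u_h|_{2,K}^2\le \alpha_*^{-1}s_K^{\Delta}(u_h-\PiK u_h,u_h-\PiK u_h)\le \alpha_*^{-1}S_K^2$. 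Summing over $K$ gives $|u_h-\PiK u_h|_{2,h}\le C\,S\le C\,\eta$. An entirely analogous argument, with $b_K(v,v)=\|v\|_{0,K}^2$, the membership $u_h-\Pi_K^0 u_h\in V_h^K\cap\ker(\Pi_K^0)$, and the stability bound $\beta_* b_K(v_h,v_h)\le s_K^0(v_h,v_h)$, gives $\|u_h-\Pi_K^0 u_h\|_{0,\Omega}\le C\,S\le C\,\eta$. Combined with $\|u-\Pi_K^0 u_h\|_{0,\Omega}\le \|u-u_h\|_{0,\Omega}+\|u_h-\Pi_K^0 u_h\|_{0,\Omega}$, this disposes of the second projection term up to the factor $\|u-u_h\|_{0,\Omega}$.

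It remains to estimate the genuine errors left after the triangle inequalities. For the Hessian term I invoke Theorem~\ref{Th5.6} directly to bound $|u-u_h|_{2,\Omega}\le C\{\eta+\tfrac{\lambda+\lambda_h}{2}\|u-u_h\|_{0,\Omega}\}$. For the remaining $L^2$ contribution, since $\lambda$ is a fixed positive eigenvalue and $\lambda_h\to\lambda$, the factor $\tfrac{\lambda+\lambda_h}{2}$ is bounded above and below by positive constants, so the leftover $\|u-u_h\|_{0,\Omega}$ is itself bounded by $C\,\tfrac{\lambda+\lambda_h}{2}\|u-u_h\|_{0,\Omega}$. Collecting these estimates proves the first inequality. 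The displayed consequence then follows at once by adding the bound of Theorem~\ref{Th5.6} for $|u-u_h|_{2,\Omega}$ to the first inequality.

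I expect no serious obstacle here: the argument is essentially bookkeeping with the triangle inequality and the one-sided stability (norm-equivalence) bounds for $s_K^{\Delta}$ and $s_K^0$. The only point requiring care is recognising that $u_h-\PiK u_h$ and $u_h-\Pi_K^0 u_h$ lie in the respective kernels, so that the stabilisation forms may be compared with $a_K$ and $b_K$, and that each comparison reproduces exactly the stabilisation estimator $S_K$, which is controlled by $\eta$.
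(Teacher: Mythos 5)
Your argument is correct and follows essentially the same route as the paper's proof: a triangle inequality inserting $u_h$, control of $|u_h-\PiK u_h|_{2,h}$ and $\|u_h-\Pi_K^0 u_h\|_{0,\Omega}$ through the kernel memberships and the stability bounds for $s_K^{\Delta}$, $s_K^0$ (so that both pieces are dominated by $S\leq\eta$), and finally Theorem~\ref{Th5.6} for $|u-u_h|_{2,\Omega}$. The only cosmetic difference is how the leftover $\|u-u_h\|_{0,\Omega}$ is handled: you absorb it using $\tfrac{\lambda+\lambda_h}{2}\geq\lambda/2>0$, whereas the paper bounds $\|u-u_h\|_{0,K}\leq\|u-u_h\|_{2,K}$ and passes to $|u-u_h|_{2,\Omega}$ via the Poincar\'e-type norm equivalence before invoking Theorem~\ref{Th5.6}; both are valid.
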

\begin{proof}
Given any $K\in \Omega_h$, the following bound holds 
\begin{align*}
||u-\Pi_K^0 u_h||_{0,K}\leq C \Big\{||u- u_h ||_{0,K}+ ||u_h-\Pi_K^0 u_h||_{0,K} \Big\}\leq C \Big\{||u- u_h ||_{2,K}+ ||u_h-\Pi_K^0 u_h||_{0,K} \Big\}.
\end{align*}
Next, by adding $|u-\PiK u_h|_{2,K}$ in the above inequality and summing for all $K\in \Omega_h$,  we have
\begin{align*}
||u-\Pi_K^0 u_h ||_{0,\O}^2 + |u-\PiK u_h|_{2,h}^2
&\leq C \Bigg( ||u-u_h ||_{2,\Omega}^2 + \sum\limits_{K\in \Omega_h} \Big\{||u- u_h ||_{2,K}^2+ ||u_h-\Pi_K^0 u_h||_{0,K}^2 \Big\} \Bigg)\\
&\leq C_P \Bigg( |u-u_h |_{2,\Omega}^2 + \sum\limits_{K\in \Omega_h} \Big\{||u- u_h ||_{2,K}^2+ ||u_h-\Pi_K^0 u_h||_{0,K}^2 \Big\} \Bigg).
\end{align*}
Finally, the proof is obtained from Lemma~\ref{consandstability}, the definition of the stabilisation estimator $S_K$, and Theorem~\ref{Th5.6}.
\end{proof}

Theorem~\ref{teo:mainConv} (cf. \eqref{double_conv_2}) and Corollary~\ref{coroll_MG} provide the following result. 
\begin{corollary}\label{coro_l-l_h}
	There exists a positive constant $C$ independent of the parameter $h$ such that 
	\begin{align*}
		|\lambda - \lambda_h|\leq C\Bigg\{\eta +\frac{\lambda+\lambda_h}{2}||u-u_h ||_{0,\O} \Bigg\}^2.
	\end{align*}
\end{corollary}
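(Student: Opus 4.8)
The plan is to combine the two preceding results directly, so the argument is short. The starting point is estimate \eqref{double_conv_2} of Theorem~\ref{teo:mainConv}, which controls $|\lambda-\lambda_h|$ by the sum of squares $||u-u_h||_{2,\Omega}^2 + ||u-\Pi_K^0 u_h||_{0,\Omega}^2 + |u-\PiK u_h|_{2,h}^2$. My first step is to replace the full norm $||u-u_h||_{2,\Omega}$ appearing there by the seminorm $|u-u_h|_{2,\Omega}$ that features in Corollary~\ref{coroll_MG}. Since $e=u-u_h\in V$ and $a(e,e)=|e|_{2,\Omega}^2$, the ellipticity recorded in Lemma~\ref{Bound_Elli} (i.e. the generalised Poincaré inequality) gives $\alpha_{\Omega}||e||_{2,\Omega}^2\le a(e,e)=|e|_{2,\Omega}^2$, hence $||u-u_h||_{2,\Omega}\lesssim|u-u_h|_{2,\Omega}$.

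Next, I would use the elementary inequality $a^2+b^2+c^2\le(a+b+c)^2$, valid for nonnegative reals, to collapse the three squared terms into a single squared sum. Together with the previous step this yields
\begin{equation*}
|\lambda-\lambda_h|\lesssim\Big(|u-u_h|_{2,\Omega}+||u-\Pi_K^0 u_h||_{0,\Omega}+|u-\PiK u_h|_{2,h}\Big)^2.
\end{equation*}
I would then invoke the second displayed estimate of Corollary~\ref{coroll_MG}, which bounds exactly the parenthesised sum by $C\{\eta+\tfrac{\lambda+\lambda_h}{2}||u-u_h||_{0,\Omega}\}$. Substituting this into the right-hand side above and retaining the square produces the claimed bound
\begin{equation*}
|\lambda-\lambda_h|\le C\Big\{\eta+\frac{\lambda+\lambda_h}{2}||u-u_h||_{0,\Omega}\Big\}^2.
\end{equation*}

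I do not expect any serious obstacle, since the statement is a corollary assembled from two already-established estimates. The only point requiring care is the passage from the full $H^2$-norm in \eqref{double_conv_2} to the $H^2$-seminorm used in Corollary~\ref{coroll_MG}; this is legitimate precisely because $e=u-u_h$ lies in $V$, where the ellipticity of $a(\cdot,\cdot)$ makes the two quantities equivalent. Everything else is the trivial algebraic inequality and a direct substitution, with all mesh-independent constants absorbed into the generic $C$.
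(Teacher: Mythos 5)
Your proposal is correct and follows exactly the route the paper intends: the paper offers no separate proof, simply noting that Theorem~\ref{teo:mainConv} (estimate \eqref{double_conv_2}) combined with Corollary~\ref{coroll_MG} yields the result. Your filled-in details—the passage from $\|u-u_h\|_{2,\Omega}$ to $|u-u_h|_{2,\Omega}$ via the ellipticity \eqref{ellip_aSSP}--\eqref{ellip_aCP} of $a(\cdot,\cdot)$ on $V$, and the elementary inequality $a^2+b^2+c^2\le(a+b+c)^2$ before invoking the second estimate of Corollary~\ref{coroll_MG}—are precisely the implicit steps, so nothing further is needed.
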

Notice that the terms on the right-hand side in Corollaries~\ref{coroll_MG} and \ref{coro_l-l_h} depend of $||u-u_h ||_{0,\Omega}$, which are not computable on the basis provided by the output values of the operators in ${\bf D_1}$ and ${\bf D_2}$. Therefore, we establish a result showing that the term $||u - u_h||_{0,\Omega}$ is asymptotically negligible.
\begin{theorem}\label{th:reliability}
	There exists a positive constant $C$ independent of parameter $h$ such that
	\begin{enumerate}
		\item[(i)] \quad $|u-u_h|_{2,\O}+||u-\Pi_K^0  u_h ||_{0,\Omega}+|u-\PiK u_h|_{2,h}\leq C\eta$.
		\item[(ii)] \quad $|\lambda - \lambda_h|\leq C\eta^2$.
	\end{enumerate}
\end{theorem}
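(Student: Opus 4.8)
The plan is to combine the three preceding results---Theorem~\ref{Teor4.9}, Corollary~\ref{coroll_MG}, and Corollary~\ref{coro_l-l_h}---through a standard absorption (kick-back) argument valid in the asymptotic regime $h\to 0$. Writing $A:=|u-u_h|_{2,\O}+||u-\Pi_K^0 u_h||_{0,\Omega}+|u-\PiK u_h|_{2,h}$ for the combined error quantity appearing on the left-hand side of (i), the observation driving the argument is that Theorem~\ref{Teor4.9} bounds the $L^2$-error by $h^s$ times precisely this quantity, namely $||u-u_h||_{0,\Omega}\le C h^s A$. Thus the $L^2$-term, which spoils computability in both Corollaries~\ref{coroll_MG} and \ref{coro_l-l_h}, is in fact a higher-order perturbation of $A$.

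First I would establish (i). Starting from Corollary~\ref{coroll_MG}, which reads $A\le C\{\eta+\tfrac{\lambda+\lambda_h}{2}||u-u_h||_{0,\Omega}\}$, I substitute the bound of Theorem~\ref{Teor4.9} to obtain $A\le C\eta + C\tfrac{\lambda+\lambda_h}{2}h^s A$. Since $\lambda_h\to\lambda$ as $h\to 0$ (cf.\ Theorem~\ref{teo:mainConv}), the factor $\tfrac{\lambda+\lambda_h}{2}$ is bounded uniformly, so there exists $h_0>0$ such that $C\tfrac{\lambda+\lambda_h}{2}h^s\le\tfrac12$ for all $h<h_0$. Absorbing the last term into the left-hand side then yields $A\le 2C\eta$, which is exactly (i).

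For (ii) I would invoke Corollary~\ref{coro_l-l_h}, namely $|\lambda-\lambda_h|\le C\{\eta+\tfrac{\lambda+\lambda_h}{2}||u-u_h||_{0,\Omega}\}^2$, and control the $L^2$-term once more by chaining Theorem~\ref{Teor4.9} with the bound (i) just proved: $||u-u_h||_{0,\Omega}\le C h^s A\le C h^s\eta$. Inserting this gives $|\lambda-\lambda_h|\le C\eta^2(1+C h^s)^2$, and for $h<h_0$ the bracketed factor is bounded by a constant, delivering (ii).

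The only delicate point---and the main obstacle---is justifying the absorption rigorously: one must fix the threshold $h_0$ uniformly, which hinges on the boundedness of $\tfrac{\lambda+\lambda_h}{2}$, and hence on the a priori convergence $\lambda_h\to\lambda$ recorded in Theorem~\ref{teo:mainConv}. Everything else reduces to algebraic manipulation of the three cited inequalities.
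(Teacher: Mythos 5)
Your proposal is correct and follows essentially the same route as the paper: both substitute the $L^2$-bound of Theorem~\ref{Teor4.9} into Corollary~\ref{coroll_MG}, absorb the $h^s$-perturbed error term for $h<h_0$ to get (i), and then chain $\|u-u_h\|_{0,\Omega}\le Ch^s\eta$ into Corollary~\ref{coro_l-l_h} for (ii). Your version merely makes explicit the absorption constant and the uniform boundedness of $\tfrac{\lambda+\lambda_h}{2}$ (via $\lambda_h\to\lambda$), details the paper folds into the generic constant $C$.
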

\begin{proof}
	\begin{enumerate}
		\item[(i)] Using the Theorem~\ref{Teor4.9} and Corollary~\ref{coroll_MG} we deduce
		\begin{align*}
			&|u-u_h|_{2,\O}+||u-\Pi_K^0  u_h ||_{0,\Omega}+|u-\PiK u_h|_{2,h}\\
			&\leq C\Bigg\{\eta + h^s\Big\{|u-u_h|_{2,\O}+||u-\Pi_K^0  u_h ||_{0,\Omega}+|u-\PiK u_h|_{2,h}\Big\}\Bigg\}.
		\end{align*}
		Therefore, results clear that there exists $h_0>0$ such that $\forall h<h_0$ the item (i) is true.
	\item[(ii)] Again from Theorem~\ref{Teor4.9} and item (i) we have the existence of $h_0>0$ such that for all $h<h_0$ implies
		\begin{align}
			||u-u_h ||_{0,\O}\leq Ch^s\eta, \label{u-u_hleqeta}
		\end{align} 
		and applying \eqref{u-u_hleqeta} in  Corollary~\ref{coro_l-l_h} we have the result. 
	\end{enumerate}
\end{proof}


\subsection{Efficiency}\label{efficiency}
This section proves that our error estimator $\eta$ is efficient up to oscillation terms given by the polynomial projections $\Pi_K^0$ and $\Pi_K^\Delta$. We start by bounding the local volume estimator $\Xi_K$.
\begin{lemma}\label{R_keffic}
	There exists a positive constant independent of $h_K$ such that
	\begin{align}
		\Xi_K\leq C\Big\{|u-u_h|_{2,K}+S_K+h_K^2||\lambda_h u_h- \lambda u||_{0,K}\Big\}
	\end{align}
	\begin{proof}
		Given any $K\in\Omega_h$ let define $\psi_K$ a polynomial function such that $0\leq \psi_K\leq 1$ and  $ \partial_{n_K}\psi_K,\psi|_{\partial K}=0$. Let us define $v^{K}:=\psi_K\lambda_h\PiK u_h \in H_0^2(K)$ and we extend $v^K$ by zero in all $\O$ such that $v^K\in   V  $ (see for instance \cite[Section~5.3]{chen2022posteriori}). Therefore, from Lemma~\ref{eq-residual} we deduce
		\begin{align}
			&a_K(e,\psi_K \lambda_h\PiK u_h)=\lambda b_K(u,\psi_K\lambda_h\PiK u_h)-\lambda_h b_K(u_h,\psi_K\lambda_h\PiK u_h) \nonumber\\ 
            &\quad - \, a_K(u_h-\PiK u_h,\psi_K\lambda_h \PiK u_h) + \lambda_h  b_K(u_h-\PiK u_h,\psi_K \lambda_h\PiK u_h)+\lambda_h || \PiK u_h||_{0,K}^2.
		\end{align}
		Since $\PiK u_h\in \mathbb{P}_2(K)$,  Lemma~\ref{lema4.2gatica2010analysis} implies that
		\begin{align}
			&C^{-1}||\lambda_h \PiK u_h ||_{0,K}^2\leq \int_K \psi_K (\lambda_h \PiK u_h)^2 \nonumber\\
			&=a_K(e,\psi_K \lambda_h\PiK u_h)-\Big\{\lambda b_K(u,\psi_K\lambda_h\PiK u_h)-\lambda_h b_K(u_h,\psi_K\lambda_h\PiK u_h) \Big\}\nonumber\\
			&\quad + \, a_K(u_h-\PiK u_h,\psi_K\lambda_h \PiK u_h) -\lambda_h  b_K(u_h-\PiK u_h,\psi_K \lambda_h\PiK u_h)\nonumber\\
			&\leq C\Big\{ |e|_{2,K}|\psi_K\lambda_h\PiK u_h|_{2,K}+|b_K(\lambda_hu_h-\lambda u,\psi_K \lambda_h \PiK u_h)| + |u_h-\PiK u_h|_{2,K}|\psi_K \lambda_h \PiK u_h|_{2,K}\nonumber\\
			&\quad +\, \lambda_h ||u_h-\Pi_K^0 u_h ||_{0,K}||\psi_K \lambda_h \PiK u_h ||_{0,K}\Big\}\nonumber\\
			&\leq C\Big\{h^{-2}|e|_{2,K}+||\lambda_hu_h-\lambda u ||_{0,K}+h_K^{-2} |u_h-\PiK u_h|_{2,K}+||u_h-\Pi_K^0 u_h ||_{0,K}  \Big\}|| \psi_K\lambda_h\PiK u_h||_{0,K},\nonumber
		\end{align}
where we have used Lemma~\ref{lema4.2gatica2010analysis}. Thus, using the fact that $0\leq  \psi_K\leq 1$, multiplying by the factor $Ch_K^2||\lambda_h \PiK u_h ||_{0,K}^{-1}$, Lemma~\ref{consandstability} and definition of the stabilisation term $S_K$ the result is  obtained.
	\end{proof}
\end{lemma}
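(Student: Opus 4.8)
The plan is to establish the efficiency of the local volume estimator $\Xi_K$ by means of a standard bubble-function argument, exploiting the residual identity from Lemma~\ref{eq-residual} localised to a single element $K$. First I would fix an element $K\in\Omega_h$ and introduce an interior bubble function $\psi_K$ satisfying $0\le\psi_K\le 1$ together with the boundary vanishing conditions $\psi_K|_{\partial K}=0$ and $\partial_{n_K}\psi_K|_{\partial K}=0$, so that the test function $v^K:=\psi_K\lambda_h\PiK u_h$ lies in $H_0^2(K)$ and extends by zero to an admissible element of $V$. The key point is that, because $v^K$ and its gradient vanish on $\partial K$, testing the residual identity with $v^K$ annihilates the jump term $E_4$ in Lemma~\ref{eq-residual}, leaving only volume contributions.

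The main step is then a \emph{norm equivalence on polynomials}: since $\PiK u_h\in\mathbb{P}_2(K)$, Lemma~\ref{lema4.2gatica2010analysis} gives that $\int_K\psi_K(\lambda_h\PiK u_h)^2$ is equivalent, up to a constant depending only on the shape of $K$, to $\|\lambda_h\PiK u_h\|_{0,K}^2$. I would rewrite this weighted $L^2$-norm using the localised residual identity, expressing it as a sum of four terms: the true residual $a_K(e,v^K)$, the data term $\lambda b_K(u,v^K)-\lambda_h b_K(u_h,v^K)$ recombined as $b_K(\lambda_h u_h-\lambda u,v^K)$, the consistency-defect term $a_K(u_h-\PiK u_h,v^K)$, and the mass-projection defect $\lambda_h b_K(u_h-\PiK u_h,v^K)$. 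Each of these is bounded by Cauchy--Schwarz against an appropriate seminorm of $v^K$.

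The crux of the argument is the inverse estimate that absorbs the negative powers of $h_K$. Applying Lemma~\ref{lema4.2gatica2010analysis} to the polynomial $\psi_K\lambda_h\PiK u_h$ (or directly to $\PiK u_h$ with the uniform bound $0\le\psi_K\le1$), one controls $|v^K|_{2,K}\lesssim h_K^{-2}\|\psi_K\lambda_h\PiK u_h\|_{0,K}$, and similarly for the lower-order seminorms. This converts the Sobolev-seminorm factors multiplying $|e|_{2,K}$ and $|u_h-\PiK u_h|_{2,K}$ into $h_K^{-2}$-weighted $L^2$-norms of $v^K$, while the data and mass-defect terms carry no such negative power. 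Collecting, dividing through by $\|\psi_K\lambda_h\PiK u_h\|_{0,K}$, and multiplying by the scaling factor $C h_K^2\|\lambda_h\PiK u_h\|_{0,K}^{-1}$ recovers $\Xi_K=h_K^2\|\lambda_h\PiK u_h\|_{0,K}$ on the left and the asserted bound on the right.

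The final bookkeeping identifies the consistency and mass-defect contributions with the stabilisation estimator $S_K$: by Lemma~\ref{consandstability} the quantities $|u_h-\PiK u_h|_{2,K}$ and $\|u_h-\Pi_K^0 u_h\|_{0,K}$ are each controlled by $S_K$ through the stabilisation bilinear forms $s_K^\Delta$ and $s_K^0$, which is precisely how $S_K$ enters the statement. The term $h_K^2\|\lambda_h u_h-\lambda u\|_{0,K}$ survives as a data-oscillation contribution, and $|e|_{2,K}=|u-u_h|_{2,K}$ is the genuine error term. The step I expect to be the main obstacle is tracking the powers of $h_K$ correctly through all four terms simultaneously: the residual and consistency terms must surrender a factor $h_K^{-2}$ to the inverse inequality so that after multiplying by $h_K^2$ they land at the right order, whereas the data and mass terms must \emph{not} be given that factor — conflating the two scalings is the easy way to produce a spurious or suboptimal power of $h_K$.
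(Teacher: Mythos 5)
Your proposal is correct and takes essentially the same approach as the paper's proof: the same interior bubble function $\psi_K$ with vanishing trace and normal derivative, the same test function $v^K=\psi_K\lambda_h\Pi_K^{\Delta}u_h\in H_0^2(K)$ extended by zero so the jump term of Lemma~\ref{eq-residual} drops out, the same polynomial norm equivalence and inverse estimates from Lemma~\ref{lema4.2gatica2010analysis}, and the same final rescaling by $Ch_K^2\|\lambda_h\Pi_K^{\Delta}u_h\|_{0,K}^{-1}$ with Lemma~\ref{consandstability} absorbing the projection defects into $S_K$. Your closing remark on which terms do (residual and consistency) and do not (data and mass-defect) surrender a factor $h_K^{-2}$ to the inverse inequality matches the paper's bookkeeping exactly.
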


Next, we will show an upper bound for the local stabilisation term $S_K$ and the local jump estimator $\mathcal{J}_l$.
\begin{lemma}\label{theta12K_effic}
	There exists a positive constant $C$ independent of the parameter $h_K$ such that
	\begin{align*}
		S_K\leq C\Big\{|u-u_h|_{2,K}+|u-\PiK  u_h|_{2,K}+||u-\Pi_K^0  u_h||_{0,K} \Big\}.
	\end{align*}
\end{lemma}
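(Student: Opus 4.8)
The plan is to bound the local stabilisation estimator $S_K$, whose two summands $s_K^{\Delta}(u_h-\PiK u_h, u_h-\PiK u_h)$ and $s_K^0(u_h-\Pi_K^0 u_h, u_h-\Pi_K^0 u_h)$ must each be controlled by the right-hand side. The key structural input is Lemma~\ref{consandstability}: the stabilisation bilinear forms are spectrally equivalent to the polynomial-consistent forms, so $s_K^{\Delta}(u_h-\PiK u_h, u_h-\PiK u_h)\lesssim a_K(u_h-\PiK u_h, u_h-\PiK u_h) = |u_h-\PiK u_h|_{2,K}^2$ and likewise $s_K^0(u_h-\Pi_K^0 u_h, u_h-\Pi_K^0 u_h)\lesssim \|u_h-\Pi_K^0 u_h\|_{0,K}^2$. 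Thus it suffices to bound the computable quantities $|u_h-\PiK u_h|_{2,K}$ and $\|u_h-\Pi_K^0 u_h\|_{0,K}$ by the target terms involving the true eigenfunction $u$.

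Each of these is handled by inserting $u$ via the triangle inequality. For the second-derivative term I would write
\begin{align*}
|u_h-\PiK u_h|_{2,K}\leq |u_h-u|_{2,K}+|u-\PiK u_h|_{2,K},
\end{align*}
which already matches the first and third terms on the right-hand side of the statement (after summing the second over $K$ to form the broken seminorm $|u-\PiK u_h|_{2,h}$). For the $L^2$ term I would analogously write
\begin{align*}
\|u_h-\Pi_K^0 u_h\|_{0,K}\leq \|u_h-u\|_{0,K}+\|u-\Pi_K^0 u_h\|_{0,K},
\end{align*}
and then bound $\|u_h-u\|_{0,K}\leq \|u-u_h\|_{2,K}$ by the Sobolev embedding $H^2(K)\hookrightarrow L^2(K)$ (equivalently, a Poincaré-type inequality), so that $\|u_h-u\|_{0,K}$ is absorbed into the $|u-u_h|_{2,K}$ contribution. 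Combining the two spectral-equivalence bounds with these triangle-inequality decompositions gives
\begin{align*}
S_K^2\lesssim |u-u_h|_{2,K}^2+|u-\PiK u_h|_{2,K}^2+\|u-\Pi_K^0 u_h\|_{0,K}^2,
\end{align*}
and taking square roots yields the stated estimate.

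The only subtlety worth flagging is that Lemma~\ref{consandstability} guarantees spectral equivalence only on the kernels $V_h^K\cap\ker(\PiK)$ and $V_h^K\cap\ker(\Pi_K^0)$; since $u_h-\PiK u_h$ lies in the first kernel and $u_h-\Pi_K^0 u_h$ in the second (both projections are idempotent), the hypotheses apply directly, so this is not a genuine obstacle but merely a point to verify. The main (though still routine) step is the passage $\|u_h-u\|_{0,K}\lesssim |u-u_h|_{2,K}$, which requires controlling the full $H^2$-norm rather than just the seminorm; this is legitimate because the boundary conditions defining $V$ make $|\cdot|_{2,\Omega}$ equivalent to $\|\cdot\|_{2,\Omega}$ (cf.\ the ellipticity estimates \eqref{ellip_aSSP}--\eqref{ellip_aCP} and the Poincaré argument already used throughout Section~\ref{Reliablity}). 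No new analytical machinery beyond Lemma~\ref{consandstability} and standard embeddings is needed.
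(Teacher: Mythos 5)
Your proof is correct in substance and follows essentially the same route as the paper's: spectral equivalence of the stabilisation terms with $a_K$ and $b_K$ on the respective kernels (Lemma~\ref{consandstability}), followed by triangle inequalities inserting the exact eigenfunction $u$, with the identification $a_K(v,v)^{1/2}=|v|_{2,K}$ and the observation that $u_h-\PiK u_h$ and $u_h-\Pi_K^0 u_h$ lie in the relevant kernels.

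One point deserves comment, concerning the $L^2$ term. After writing $\|u_h-\Pi_K^0 u_h\|_{0,K}\le\|u-u_h\|_{0,K}+\|u-\Pi_K^0 u_h\|_{0,K}$, you absorb $\|u-u_h\|_{0,K}$ into $|u-u_h|_{2,K}$ by invoking the equivalence of $|\cdot|_{2,\Omega}$ with $\|\cdot\|_{2,\Omega}$ on $V$. That equivalence is \emph{global}: elementwise, $\|u-u_h\|_{0,K}\lesssim|u-u_h|_{2,K}$ is false in general (take $u-u_h$ close to a nonzero constant on $K$), so your final display, read as a purely local statement, is only justified after summation over $K\in\Omega_h$. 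You are, however, in good company: the paper's own proof passes from $b_K(u_h-\PiK u_h,u_h-\PiK u_h)^{1/2}=\|u_h-\Pi_K^0 u_h\|_{0,K}$ directly to $\|u-\Pi_K^0 u_h\|_{0,K}$, silently dropping the same $\|u-u_h\|_{0,K}$ contribution. In both cases the imprecision is harmless for how the lemma is actually used, since Theorem~\ref{local_eff} and Corollary~\ref{global_eff} involve sums over elements, where the Poincaré-type bound $\|u-u_h\|_{0,\Omega}\lesssim|u-u_h|_{2,\Omega}$ for $u-u_h\in V$ applies; your writeup has the merit of flagging the issue explicitly, whereas the paper does not.
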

\begin{proof} 
The definition of the stabilisation term and the Cauchy-Schwarz inequality imply that
\begin{align*}
S_K&=    a_{h,K}(u_h-\PiK u_h,u_h-\PiK u_h)^{1/2} + b_{h,K}(u_h-\PiK u_h,u_h-\PiK u_h)^{1/2}\\
&\leq C \Big\{a_{K}(u_h-\PiK u_h,u_h-\PiK u_h)^{1/2} + b_{K}(u_h-\PiK u_h,u_h-\PiK u_h)^{1/2}\Big\}\\
&\leq C \Big\{|u_h-\PiK u_h|_{2,K}+ ||u-\Pi_K^0u_h||_{0,K}  \Big\}
\\
&\leq C \Big\{ |u-u_h|_{2,K} + |u-\PiK u_h|_{2,K}+ ||u-\Pi_K^0u_h||_{0,K}  \Big\}
\end{align*}
\end{proof}

\begin{lemma}\label{jump_eff}
	There exists a positive constant $C$ independent of the parameter $h_K$ such that
	\begin{align}
	\mathcal{J}_l\leq C\sum\limits_{K'\in \omega(l)}\Big\{|u-u_h |_{2,K'}+ h_{K'}^2 ||\lambda u-\lambda_h u_h ||_{0,K'}+S_{K'}\Big\},\quad \forall l\in \mathcal{E}_K\cap\mathcal{E}_{\O}, 
	\end{align}
	where $\omega(l):=\Big\{K'\in \Omega_h:l\in \partial {K'} \}$.
\end{lemma}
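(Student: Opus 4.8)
The plan is to use the standard edge-bubble localisation technique, exploiting the residual identity of Lemma~\ref{eq-residual} tested against a carefully constructed $H^2$-conforming bubble supported on the patch $\omega(l)$. First I would fix an interior edge $l\in\mathcal{E}_K\cap\mathcal{E}_{\O}$ and abbreviate the residual jump by $\boldsymbol{r}_l:=[\![(\nabla^2\PiK u_h)\boldsymbol{n}_K^l]\!]$, noting that since $\PiK u_h\in\mathbb{P}_2(K)$ on each element, $\nabla^2\PiK u_h$ is constant on each side, so $\boldsymbol{r}_l$ is a (vector-valued) polynomial on $l$ and the scaled norm-equivalences on polynomial spaces (cf. Lemma~\ref{lema4.2gatica2010analysis}) apply. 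The key construction is an edge-bubble function $v_l\in H^2_0(\omega(l))$, extended by zero to all of $\Omega$ so that $v_l\in V$, whose gradient vanishes on every edge of $\omega(l)$ except $l$ and which satisfies the bubble-weighted equivalence together with the scaling estimates
\[
\|\boldsymbol{r}_l\|_{0,l}^2\lesssim \int_l \boldsymbol{r}_l\cdot\nabla v_l,
\qquad
|v_l|_{2,K'}\lesssim h_{K'}^{-1/2}\|\boldsymbol{r}_l\|_{0,l},
\qquad
\|v_l\|_{0,K'}\lesssim h_{K'}^{3/2}\|\boldsymbol{r}_l\|_{0,l},
\]
for each $K'\in\omega(l)$; these follow because the support of $v_l$ is a tube of width $O(h_{K'})$ around $l$, together with the shape-regularity assumptions {\bf A1}--{\bf A2}.

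Next I would insert $v=v_l$ into the identity of Lemma~\ref{eq-residual}. Because $v_l$ is supported on $\omega(l)$ and $\nabla v_l$ vanishes on the remaining interior edges of the patch, all jump contributions except the one on $l$ drop out, leaving
\[
\int_l \boldsymbol{r}_l\cdot\nabla v_l
= b(\lambda u-\lambda_h u_h,v_l) - a(e,v_l)
+ \sum_{K'\in\omega(l)}\big\{a_{K'}(\PiK u_h-u_h,v_l)+\lambda_h b_{K'}(u_h-\PiK u_h,v_l)\big\}
+ \sum_{K'\in\omega(l)}\int_{K'}\lambda_h\PiK u_h\,v_l.
\]
Each term on the right is then estimated by Cauchy--Schwarz and the scaling bounds above: the term $a(e,v_l)$ produces $\sum_{K'}h_{K'}^{-1/2}|u-u_h|_{2,K'}\|\boldsymbol{r}_l\|_{0,l}$; the term $b(\lambda u-\lambda_h u_h,v_l)$ produces $\sum_{K'}h_{K'}^{3/2}\|\lambda u-\lambda_h u_h\|_{0,K'}\|\boldsymbol{r}_l\|_{0,l}$; the consistency and stabilisation contributions $a_{K'}(\PiK u_h-u_h,v_l)$ and $\lambda_h b_{K'}(u_h-\PiK u_h,v_l)$ are controlled by $S_{K'}$ through the stability bound of Lemma~\ref{consandstability} (with $\Pi_K^0=\PiK$ in the lowest-order case); and the volume contribution is bounded using the efficiency of the volume estimator already established in Lemma~\ref{R_keffic}, i.e. $h_{K'}^2\|\lambda_h\PiK u_h\|_{0,K'}=\Xi_{K'}\lesssim |u-u_h|_{2,K'}+S_{K'}+h_{K'}^2\|\lambda_h u_h-\lambda u\|_{0,K'}$.

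Collecting these bounds, dividing through by $\|\boldsymbol{r}_l\|_{0,l}$, and finally multiplying by $h_l^{1/2}$ (using $h_l\simeq h_{K'}$ by {\bf A1}) yields exactly the claimed estimate for $\mathcal{J}_l=h_l^{1/2}\|\boldsymbol{r}_l\|_{0,l}$. I expect the main obstacle to be the construction of the $H^2_0$-conforming edge bubble $v_l$ that reproduces $\boldsymbol{r}_l$ through its gradient along $l$ while forcing both $v_l$ and $\nabla v_l$ to vanish on $\partial\omega(l)$: unlike the second-order case, the $C^1$ setting requires control of second derivatives, so the scaling estimate $|v_l|_{2,K'}\lesssim h_{K'}^{-1/2}\|\boldsymbol{r}_l\|_{0,l}$ must be derived on a fixed reference configuration and transported to $\omega(l)$ by the shape-regularity assumptions {\bf A1}--{\bf A2}.
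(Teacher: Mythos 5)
Your proposal is correct and follows essentially the same route as the paper: an $H_0^2$-conforming edge bubble (the paper builds it as $\hat v=v\varphi_l$ supported on the two sub-triangles $T^\pm$ sharing $l$ guaranteed by {\bf A2}, with $\hat v|_l=0$, $\partial_{\boldsymbol{n}^l}\hat v=\psi_l[\![(\nabla^2\PiK u_h)\boldsymbol{n}_K^l]\!]|_l$ and $\|\hat v\|_{0,T^\dagger}\lesssim h_{K^\dagger}^{3/2}\|[\![(\nabla^2\PiK u_h)\boldsymbol{n}_K^l]\!]\|_{0,l}$, citing the same type of construction you invoke) is inserted into the identity of Lemma~\ref{eq-residual}, the resulting terms are bounded by Cauchy--Schwarz together with the inverse estimate of Lemma~\ref{lema4.2gatica2010analysis}, the stabilisation terms are absorbed into $S_{K'}$ via Lemma~\ref{consandstability}, and the volume term is removed with Lemma~\ref{R_keffic} before multiplying by $h_l^{1/2}$. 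The only cosmetic difference is that you support the bubble on the whole patch $\omega(l)$ while the paper localises it to $T^+\cup T^-$, which changes nothing in the estimates.
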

\begin{proof}
Let $l\in \mathcal{E}_K\cap\mathcal{E}_{\Omega}$ shared by two polygons $K^+$ and $K^-$. Denoting by $\mathcal{T}_K^+$ and $\mathcal{T}_K^-$ the triangulations for $K^+$ and $K^-$ allowed by ${\bf A2}$ and by $T^+$ and $T^-$ the triangles in $\mathcal{T}_K^+$ and $\mathcal{T}_K^-$, respectively, having to $l$ as a common side. We define $\psi_l$ as the corresponding edge bubble function, and $v:=\psi_l[\![(\nabla^2\PiK u_h )\boldsymbol{n}_K^{l}]\!]|_l$. Since $v$ is a polynomial function, $v$ can be extended to $T^+\cup T^-$. For simplicity, we denote this extension by $v$. Then, we consider $\hat{v}:=v\varphi_l\in H_0^2(T^+\cup T^-)\subset V$, such that 
$$
\partial_{\boldsymbol{n}^l}\hat{v}=\psi_l[\![(\nabla^2\PiK u_h )\boldsymbol{n}_K^{l}]\!]|_l,\quad  \hat{v}|_l=0,
\, \mbox{and}\,
||\hat{v}|_{0,T^\dagger}\leq Ch_{K^\dagger}^{3/2}|| [\![(\nabla^2\PiK u_h )\boldsymbol{n}_K^{l}]\!]||_{0,l},
\quad  \forall \dagger  \in \{+,- \},
$$
where $\varphi_l$ is the function of line $l$ (see \cite[Section~5.3]{chen2022posteriori} for further details). From the Lemma~\ref{eq-residual} we have
\begin{align*}
&C ||[\![(\nabla^2\PiK u_h )\boldsymbol{n}_K^{l}]\!]||_{0,l}^2 \leq \int_l [\![(\nabla^2\PiK u_h )\boldsymbol{n}_K^{l}]\!]\psi_l\nonumber\\
&=\sum\limits_{K=T^+,T^-}\Bigg\{ a_K(-e,\hat{v})+ \Big\{\lambda b_K(u,\hat{v})-\lambda_h b_K(u_h,\hat{v}) \Big \} + \sum\limits_{K'\in \omega_l}a_K(\PiK u_h-u_h,\hat{v}) \nonumber\\ 
&\quad + \lambda_h    b_K(u_h-\Pi_K^0 u_h,\hat{v})
 +\int_K \lambda_h \PiK u_h \hat{v} \Bigg\} \nonumber\\
&\leq C \sum\limits_{K=T^+,T^-}\Bigg\{ |e|_{2,K}|\hat{v}|_{2,K}+ ||\lambda u -\lambda_h u_h ||_{0,K}||\hat{v} ||_{0,K} + |u_h-\PiK u_h|_{2,K}|\hat{v}|_{2,K}\nonumber \\
&\quad +||u_h-\Pi_K^0 u_h ||_{0,K}|| \hat{v}||_{0,K}
+ ||\lambda_h \PiK u_h ||_{0,K}|| \hat{v}||_{0,K}\Bigg\}\nonumber\\
&\leq C \sum\limits_{K=T^+,T^-}\Bigg\{\Big( h_K^{-2}|e|_{2,K} + ||\lambda u -\lambda_h u_h ||_{0,K}  + h_K^{-2}|u_h-\PiK u_h|_{2,K} 
\nonumber\\
&\quad +||u_h-\Pi_K^0 u_h ||_{0,K} + ||\lambda_h \PiK u_h ||_{0,K} \Big) ||\hat{v}||_{0,K}\Bigg\}\nonumber\\
&\leq C \sum\limits_{K=T^+,T^-}\Bigg\{\Big( h_K^{-1/2}|e|_{2,K} + h_K^{3/2}||\lambda u -\lambda_h u_h ||_{0,K}  + h_K^{-1/2}|u_h-\PiK u_h|_{2,K} 
\nonumber\\
&\quad +h_K^{3/2}||u_h-\Pi_K^0 u_h ||_{0,K} + h_K^{3/2}||\lambda_h \PiK u_h ||_{0,K} \Big) || [\![(\nabla^2\PiK u_h )\boldsymbol{n}_K^{l}]\!]||_{0,l}\Bigg\}.
\end{align*}

Finally, Lemma~\ref{consandstability}, the definition of $S_K$ and Lemma~\ref{R_keffic} conclude the proof.
\end{proof}

The efficiency of $\eta_K$ is provided next.
\begin{theorem}\label{local_eff}
	There exists a positive constant $C$ independent of the parameter $h$ such that
	\begin{align*}
		\eta_K^2\leq C\sum\limits_{K'\in \omega_{K}} \Big\{ |u-u_h|_{2,K'}^2+||u-\Pi_{K'}^{0}u_h||_{0,K'}^2+|u-\Pi_{K'}^{\Delta}u_h|_{2,K'}^2+h^4||\lambda u-\lambda_h u_h ||_{0,K'}^2\Big\}  
	\end{align*}
\end{theorem}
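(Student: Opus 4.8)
The plan is to assemble the local efficiency bound for $\eta_K^2$ by combining the three component estimates already proved in Lemmas~\ref{R_keffic}, \ref{theta12K_effic}, and \ref{jump_eff}, recalling that by definition $\eta_K^2 = \Xi_K^2 + \sum_{l\in \mathcal{E}_K\cap \mathcal{E}_{\O}} \mathcal{J}_l^2 + S_K^2$. First I would bound $S_K^2$ directly via Lemma~\ref{theta12K_effic}, which already expresses $S_K$ in terms of $|u-u_h|_{2,K}$, $|u-\PiK u_h|_{2,K}$, and $\|u-\Pi_K^0 u_h\|_{0,K}$; squaring and using $(a+b+c)^2\leq 3(a^2+b^2+c^2)$ gives the desired form on the single element $K$. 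Next I would square the bound from Lemma~\ref{R_keffic} for $\Xi_K$; since $S_K$ reappears there, I would substitute the Lemma~\ref{theta12K_effic} estimate for $S_K$ to rewrite everything in terms of the genuine error quantities plus the $h_K^4\|\lambda_h u_h - \lambda u\|_{0,K}^2$ oscillation term.

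The jump term is the one requiring care, since Lemma~\ref{jump_eff} bounds $\mathcal{J}_l$ by a sum over the edge patch $\omega(l)=\{K'\in\Omega_h : l\in\partial K'\}$ rather than over $K$ alone. So after squaring $\mathcal{J}_l$ and summing over the edges $l\in\mathcal{E}_K\cap\mathcal{E}_{\O}$, the resulting contributions will naturally spread over neighbouring elements. This is precisely why the statement of Theorem~\ref{local_eff} is phrased with the element patch $\omega_K$ on the right-hand side: the final estimate cannot be purely local to $K$ because the jump estimator inherently couples $K$ with its edge-sharing neighbours. I would again insert the Lemma~\ref{theta12K_effic} bound for the $S_{K'}$ terms appearing in Lemma~\ref{jump_eff}, converting them into the standard error quantities on each $K'\in\omega(l)$.

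The main obstacle, and the step I expect to consume most of the bookkeeping, is the careful consolidation of the patch sums: each of the three pieces contributes terms localised to slightly different neighbourhoods ($K$ itself for $\Xi_K$ and $S_K$, and the edge patches $\omega(l)$ for the jumps), and I must verify that the union of all these neighbourhoods is contained in a single fixed patch $\omega_K$ of bounded cardinality. Here the mesh regularity assumptions \textbf{A1}--\textbf{A2} are essential: they guarantee that the number of elements sharing an edge or vertex with $K$ is uniformly bounded, so that the overlapping contributions can be absorbed into a constant $C$ independent of $h$ while the right-hand side remains a finite sum over $\omega_K$. I would also use the equivalence of $\|\cdot\|_{2,K}$ and $|\cdot|_{2,K}$ up to lower-order terms, together with the uniform comparability of $h_K$ across neighbouring elements (again from \textbf{A1}--\textbf{A2}), to unify the mesh-size factors $h_{K'}^4$ appearing in the oscillation terms into the single $h^4$ written in the statement. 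Collecting every contribution and bounding the finite number of patch overlaps by the constant $C$ then yields the claimed inequality.
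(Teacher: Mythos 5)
Your proposal is correct and follows exactly the route the paper takes: the paper's proof of Theorem~\ref{local_eff} consists of the single line that the result ``is obtained from Lemmas~\ref{R_keffic}--\ref{jump_eff}'', i.e.\ squaring and summing the three component bounds, re-absorbing the $S_K$ and $S_{K'}$ terms via Lemma~\ref{theta12K_effic}, and letting the edge patches $\omega(l)$ produce the element patch $\omega_K$. Your elaboration of the bookkeeping --- bounded patch cardinality and comparability of $h_{K'}$ with $h$ under \textbf{A1}--\textbf{A2} --- is precisely the routine detail the paper leaves implicit.
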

\begin{proof}
	The result is obtained from Lemmas~\ref{R_keffic}-\ref{jump_eff}.
\end{proof}

Finally, the term $h_K^2||\lambda u-\lambda_h u_h ||_{0,K'}$ is asymptotically negligible for the global estimator $\eta$, this result is provided below.
\begin{corollary}\label{global_eff}
	There exists a positive constant $C$ independent of the parameter $h$ such that 
	\begin{align}
		\eta^2\leq C \Big\{|u-u_h|_{2,\O}^2+|| u-\Pi_K^0 u_h||_{0,\Omega}^2+|u-\PiK u_h|_{2,h}^2\Big\}\nonumber. 
	\end{align}
\end{corollary}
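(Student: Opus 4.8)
The plan is to establish the global efficiency estimate in Corollary~\ref{global_eff} by starting from the local efficiency result Theorem~\ref{local_eff}, summing over all elements, and then showing that the remaining term involving $h_K^4\|\lambda u-\lambda_h u_h\|_{0,K}^2$ is asymptotically negligible compared to the computable error quantities. First I would sum the local bound of Theorem~\ref{local_eff} over all $K\in\Omega_h$. Since each patch $\omega_K$ overlaps only finitely many times (by the mesh regularity assumptions \textbf{A1}--\textbf{A2}, which guarantee a bounded number of neighbours per element), the summation of the right-hand side produces a constant multiple of the global quantities, yielding
\begin{align*}
\eta^2\leq C\Big\{|u-u_h|_{2,\O}^2+\|u-\Pi_K^0 u_h\|_{0,\Omega}^2+|u-\PiK u_h|_{2,h}^2+\sum_{K\in\Omega_h}h_K^4\|\lambda u-\lambda_h u_h\|_{0,K}^2\Big\}.
\end{align*}

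The core of the argument is then to absorb the oscillation-type term $\sum_K h_K^4\|\lambda u-\lambda_h u_h\|_{0,K}^2$. The idea is to split $\lambda u-\lambda_h u_h = \lambda(u-u_h)+(\lambda-\lambda_h)u_h$ by the triangle inequality, so that
\begin{align*}
h_K^2\|\lambda u-\lambda_h u_h\|_{0,K}\leq h_K^2\lambda\|u-u_h\|_{0,K}+h_K^2|\lambda-\lambda_h|\,\|u_h\|_{0,K}.
\end{align*}
Since $h_K\leq h$, the first contribution is bounded by $Ch^2\|u-u_h\|_{0,\Omega}$, and using Theorem~\ref{Teor4.9} together with the reliability item (i) of Theorem~\ref{th:reliability} one has $\|u-u_h\|_{0,\Omega}\lesssim h^s(|u-u_h|_{2,\O}+\|u-\Pi_K^0 u_h\|_{0,\Omega}+|u-\PiK u_h|_{2,h})$, which is of higher order in $h$ relative to the target right-hand side. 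For the second contribution, Corollary~\ref{coro_l-l_h} together with Theorem~\ref{th:reliability} gives $|\lambda-\lambda_h|\lesssim \eta^2$, and since $\|u_h\|_{0,\Omega}=1$ this term carries an extra factor $h^4\eta^2$, which is of higher order and can be absorbed into the left-hand side $\eta^2$ for $h$ small enough.

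Combining these estimates, for $h$ below a threshold $h_0$ the entire oscillation term is dominated either by $h^{2+2s}$ times the computable error quantities (hence higher order) or by a small multiple of $\eta^2$, which is moved to the left-hand side. The main obstacle I anticipate is making the absorption of the $|\lambda-\lambda_h|$ contribution rigorous: one must verify that the constant produced when invoking Corollary~\ref{coro_l-l_h} is genuinely independent of $h$ and that the resulting factor multiplying $\eta^2$ is strictly less than one for $h<h_0$, so that the term can be subtracted cleanly. Once that absorption is justified, the remaining higher-order terms vanish relative to the error in the limit $h\to 0$, and the stated efficiency bound follows.
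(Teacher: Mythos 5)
Your proposal is correct in substance and starts exactly as the paper does --- sum Theorem~\ref{local_eff} over $K\in\Omega_h$ (finite patch overlap under \textbf{A1}--\textbf{A2}) and split $\lambda u-\lambda_h u_h=\lambda(u-u_h)+(\lambda-\lambda_h)u_h$ with $\|u_h\|_{0,\Omega}=1$ --- but it handles the two resulting terms by a genuinely different route. For the $\lambda(u-u_h)$ part you invoke Theorem~\ref{Teor4.9} and reliability, which is more than needed: the crude bound $\|u-u_h\|_{0,\Omega}\le\|u-u_h\|_{2,\Omega}\lesssim|u-u_h|_{2,\Omega}$ (ellipticity of $a$ on $V$) already places this contribution inside the target right-hand side, with the $h^4$ prefactor playing no role. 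The real divergence is in the $(\lambda-\lambda_h)$ part: the paper (following \cite[Corollary~5.14]{MR2019}) stays entirely a priori, writing $|\lambda-\lambda_h|^2\le(\lambda+\lambda_h)|\lambda-\lambda_h|$ and then applying the double-rate estimate \eqref{double_conv_2} of Theorem~\ref{teo:mainConv}, so that $h^4|\lambda-\lambda_h|^2$ is bounded \emph{directly} by the three error quantities, with no absorption and no appeal to the reliability results. You instead route through Theorem~\ref{th:reliability}(ii) to get $|\lambda-\lambda_h|\lesssim\eta^2$ and absorb a term of the form $h^4\eta^4$ into the left-hand side; this works, but it makes efficiency logically dependent on reliability (the paper keeps the two directions independent), and the absorption requires exactly the ingredient you flagged as delicate, namely control of the factor multiplying $\eta^2$. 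That gap is easy to close without any boundedness claim on $\eta$: write $h^4|\lambda-\lambda_h|^2\le(\lambda+\lambda_h)\,h^4|\lambda-\lambda_h|\lesssim h^4\eta^2$, which is absorbable once $Ch^4<1/2$, i.e. for $h$ below a fixed threshold. But note that once you have introduced the factor $(\lambda+\lambda_h)|\lambda-\lambda_h|$, you may as well apply \eqref{double_conv_2} and conclude as the paper does, avoiding the absorption and any threshold on $h$ beyond the one already implicit in Theorem~\ref{teo:mainConv}.
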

\begin{proof}
The proof follows with the same arguments as those applied in  \cite[Corollary~5.14]{MR2019}.
\end{proof}
\begin{remark}
    We refer the reader to \cite{C1VEM_Polyhedral} for more details about the construction of the C$^1$ virtual element discretisation in 3D. Note that Lemma~\ref{eq-residual} includes a boundary term (c.f. the jump estimator $\mathcal{J}$ in \eqref{global_estimators}), which in this case is defined over the faces of each polyhedron. Since this term does not require the explicit evaluation of $u_h$ on the faces—where it belongs to a virtual space—the previously given proofs remain valid in the three-dimensional setting. We also refer to \cite{cangiani2017posteriori} for the construction of the bubble function on faces.
\end{remark}

\section{Numerical examples}\label{sec:numerical-examples} 
In this section, we present some numerical results that illustrate the properties of the estimator introduced in Section~\ref{SEC:EST_A_POST}, showing the optimal behaviour of the associated adaptive algorithm under different convex polygonal meshes. Then, we introduce the classical L-shape domain to illustrate the capability of capturing singularities in non-convex domains.
\begin{figure}[h!]
    \centering
    \subfigure[Voronoi. \label{fig:polygonal}]{\includegraphics[width=0.23\textwidth,trim={2.7cm 1cm 2.4cm 0.7cm},clip]{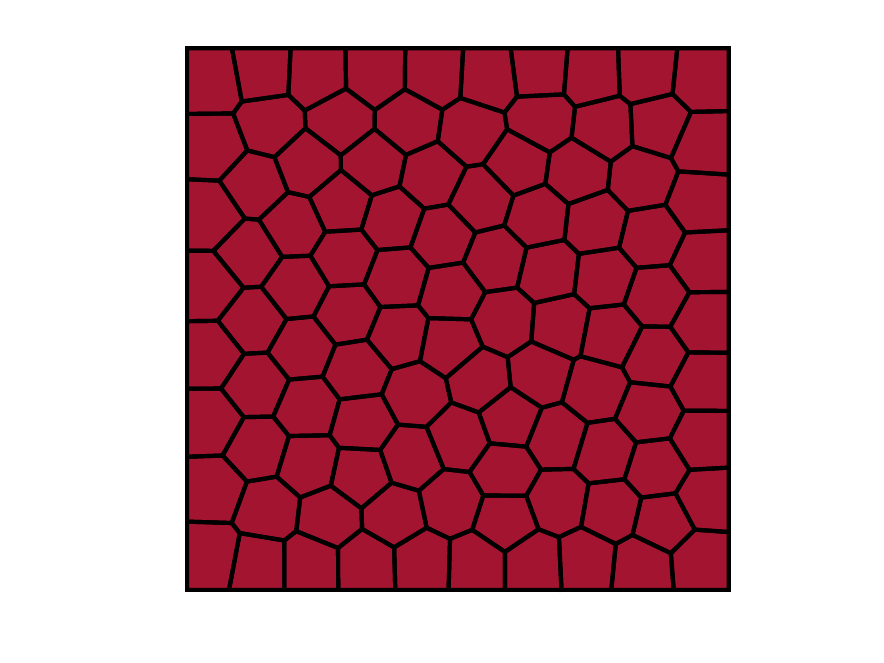}}  
    \subfigure[Square. \label{fig:square}]{\includegraphics[width=0.23\textwidth,trim={2.7cm 1cm 2.4cm 0.7cm},clip]{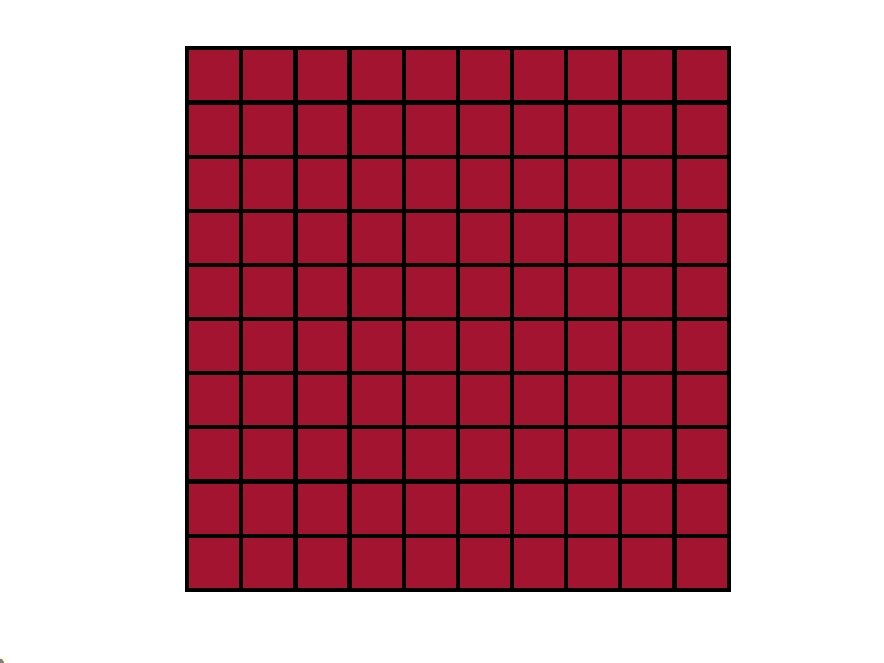}} 
    \subfigure[Crossed. \label{fig:crossed}]{\includegraphics[width=0.23\textwidth,trim={2.7cm 1cm 2.65cm 0.7cm},clip]{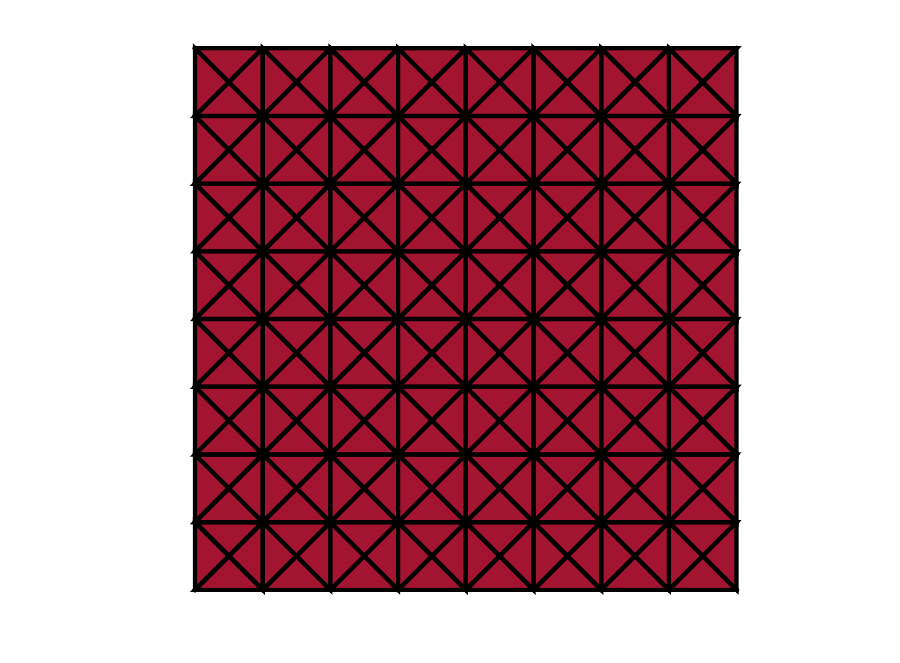}}
    \subfigure[L. \label{fig:L}]{\includegraphics[width=0.23\textwidth,trim={2.7cm 1cm 2.4cm 0.7cm},clip]{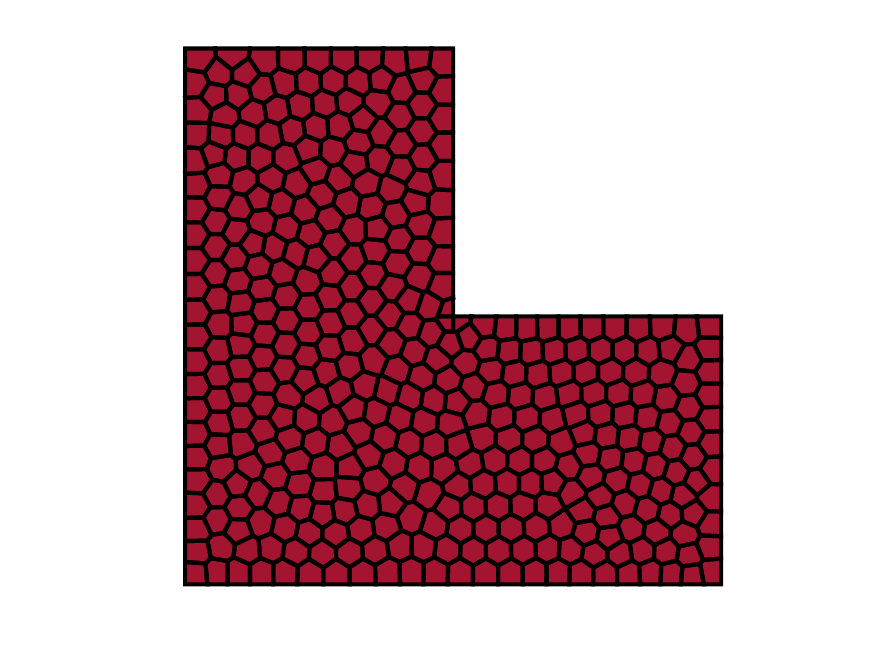}} \\
    \subfigure[Aircraft wing. \label{fig:wingPlane}]{\includegraphics[width=0.23\textwidth,trim={2.7cm 1cm 2.4cm 0.7cm},clip]{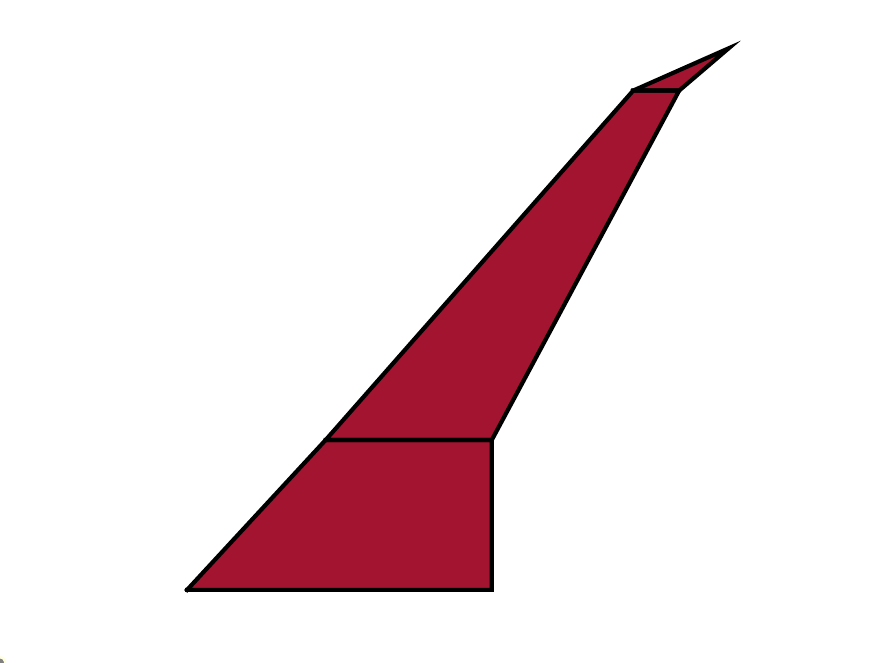}}
    \subfigure[Cube. \label{fig:cube}]{\includegraphics[width=0.23\textwidth,trim={6.cm -2.cm 9.15cm 2.cm},clip]{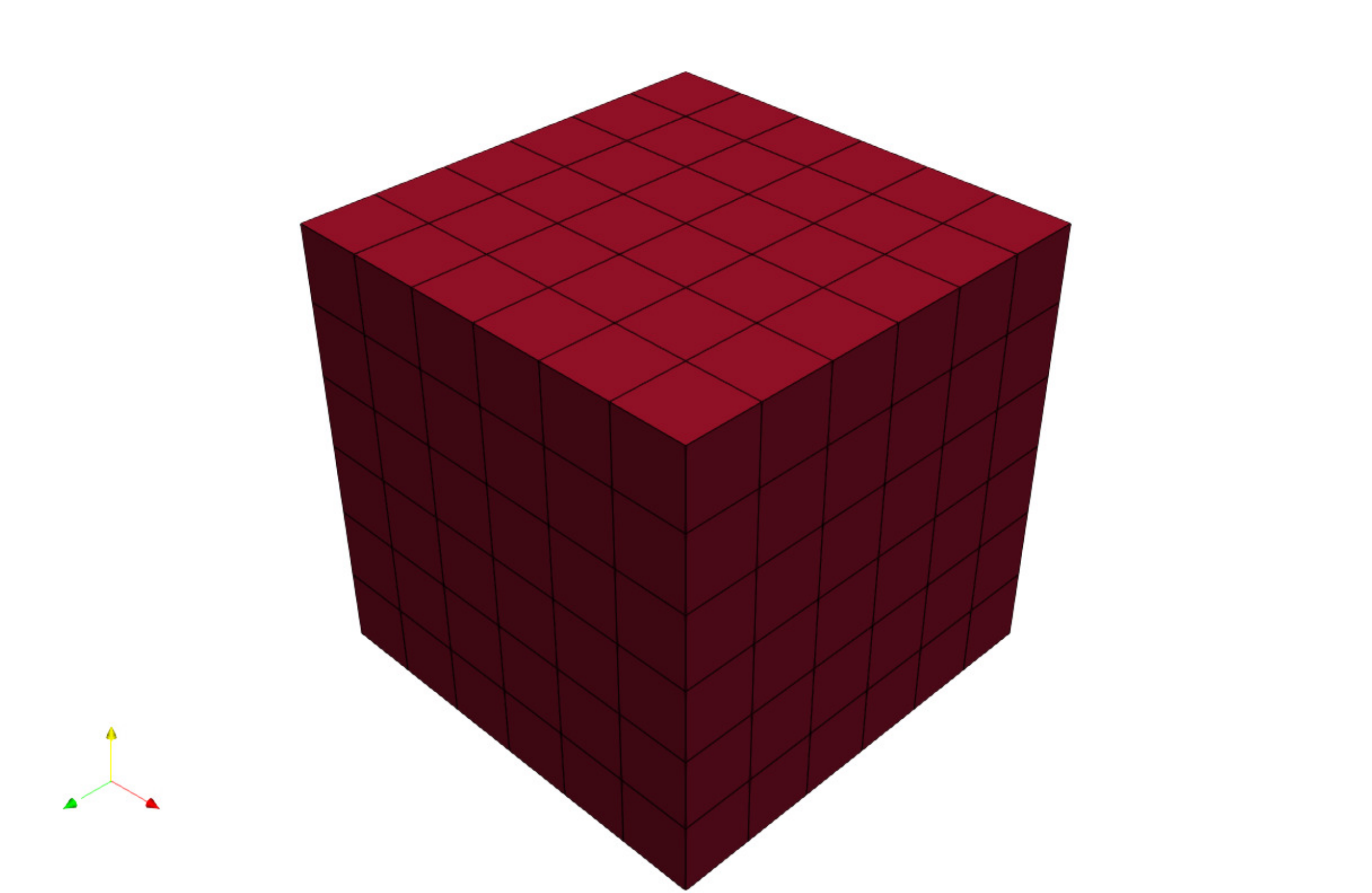}}
    \subfigure[Fichera. \label{fig:Fichera}]{\includegraphics[width=0.23\textwidth,trim={6.cm -2.cm 9.15cm 2.cm},clip]{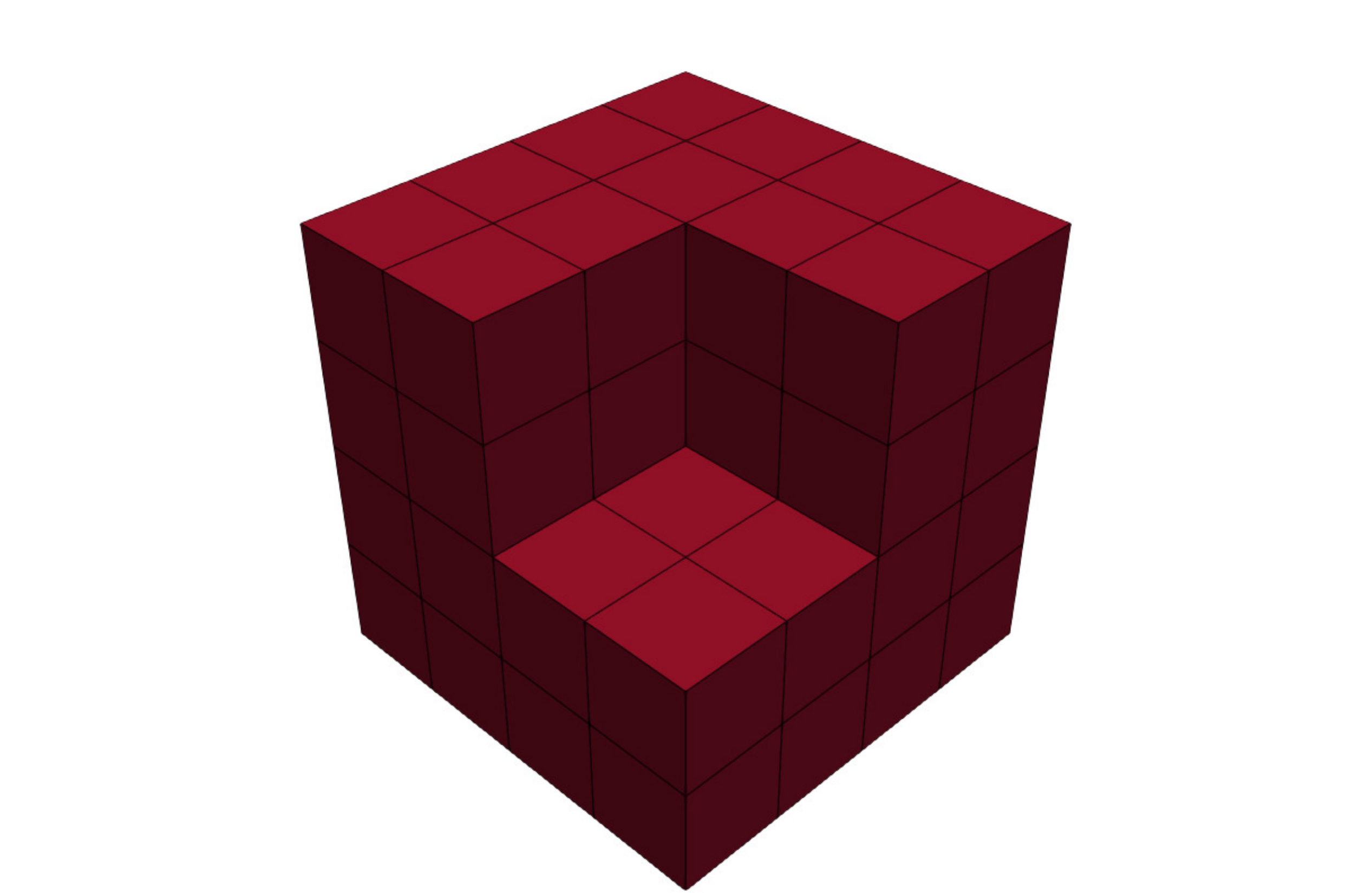}}
    \caption{Domain discretisations used for the numerical examples.}\label{meshes_2d}
\end{figure}

The numerical implementation is done with the library \texttt{VEM++} \cite{dassi2025vem++} where the VE space presented in Section~\ref{SEC:DISCRETE} is available. To solve the generalised eigenvalues problem arising from such discretizations, \texttt{VEM++} contains a wrapper of the C++ library SLEPC \cite{slepc} with the problem type option \texttt{EPS--GHEP}. Regarding the adaptive algorithm, we follow the usual strategy
$$\textnormal{SOLVE} \rightarrow \textnormal{ESTIMATE} \rightarrow \textnormal{MARK} \rightarrow \textnormal{REFINE}$$
where the SOLVE and ESTIMATE steps are performed inside \texttt{VEM++} using SLEPC as a solver for the linear system, in this way, we exploit the high-speed computation capabilities of C++. Next, for the 2D REFINE step, we use the Matlab-based method from \cite{yu2021implementationpolygonalmeshrefinement}, which connects edge mid-points to the polygon barycenter. On the other hand, the 3D adaptive refinement routine operates on cubical meshes with support for hanging nodes, making use of the \texttt{p4est} library \cite{p4est} via the \texttt{GridapP4est} module of the Julia package \texttt{Gridap} \cite{gridap}. We recall that the refinement procedure is independent of the \texttt{VEM++} capabilities. Therefore, the implementation presented in this paper can be extended to more general (non-convex) polygonal shapes that satisfy \textbf{A1}, \textbf{A2} by selecting an appropriate refinement routine. Finally, for the MARK procedure, we follow a D\"orfler/Bulk marking strategy, marking the subset of mesh elements $\mathcal{K} \subseteq \mathcal{T}^h$ with the largest estimated errors such that for $\delta = \frac{1}{2}\in [0,1]$, we have
$$\delta \sum_{K\in \Omega_h} \eta_K^2 \leq \sum_{K\in \mathcal{K}} \eta_K^2.$$ 

The experimental order of convergence $r(*)$ against the total number of degrees of freedom $\textnormal{\#DoF}$ applied to either the error $|\lambda_{i,h} - \lambda_i|$ or the estimator $\eta$, and the effectivity index $\textnormal{eff}$ of the refinement $1\leq j$ for $d=2,3$ are computed as
$r(*)_{j+1} = -d \log\left(*_{j+1}/*_{j}\right)/\log\left(\textnormal{\#DoFs}_{j+1}/\textnormal{\#DoFs}_{j}\right)$, $\quad \textnormal{eff}_{j} = \eta^{2}_j/|\lambda_{i,h} - \lambda_i|_j$.
\begin{figure}[h!]
    \centering
    \includegraphics[width=0.55\textwidth,trim={1.75cm 0.2cm 2.2cm 0.8cm},clip]{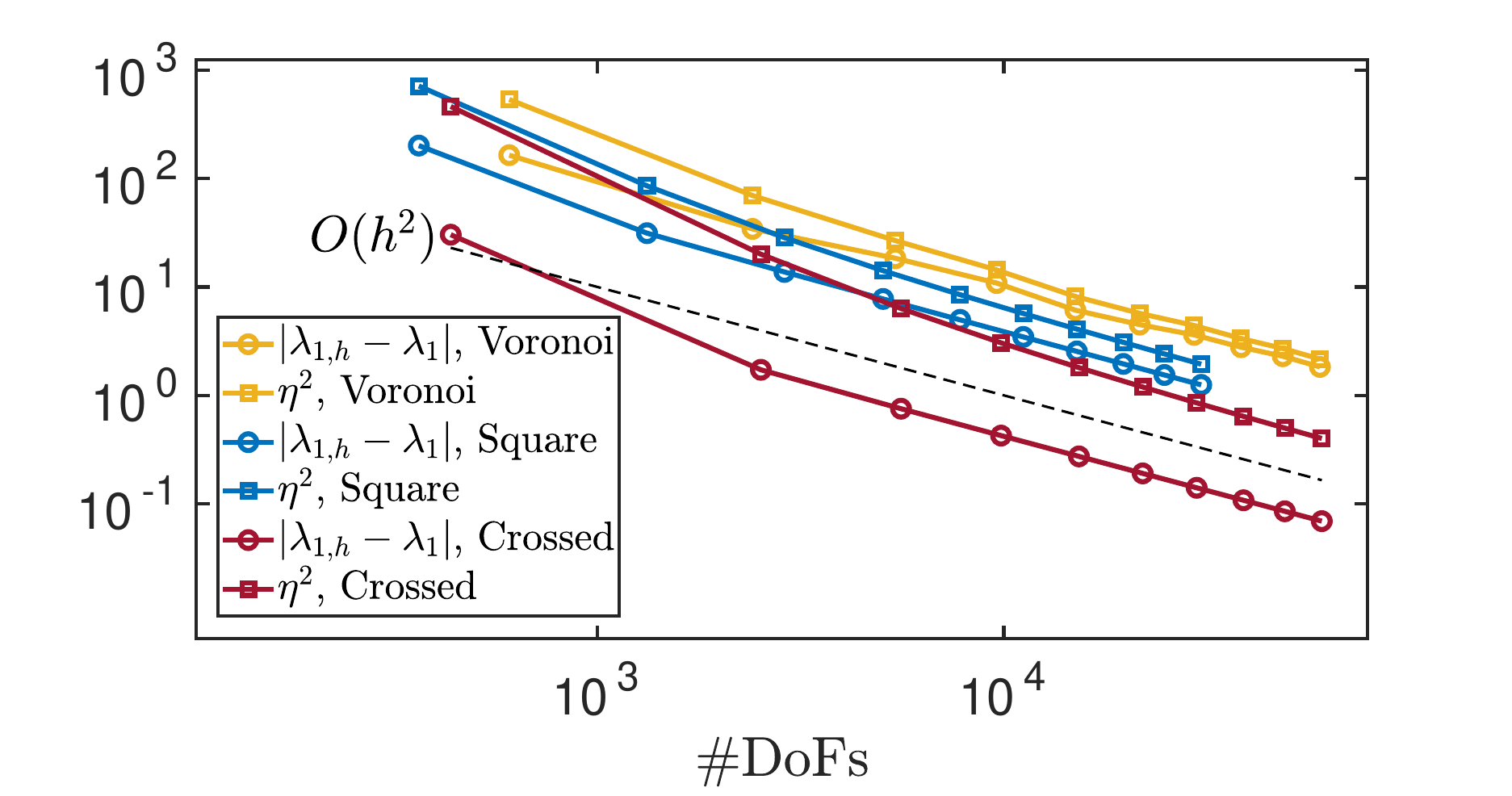}  
    \includegraphics[width=0.3025\textwidth,trim={1.2cm 0.2cm 0.5cm 1.2cm},clip]{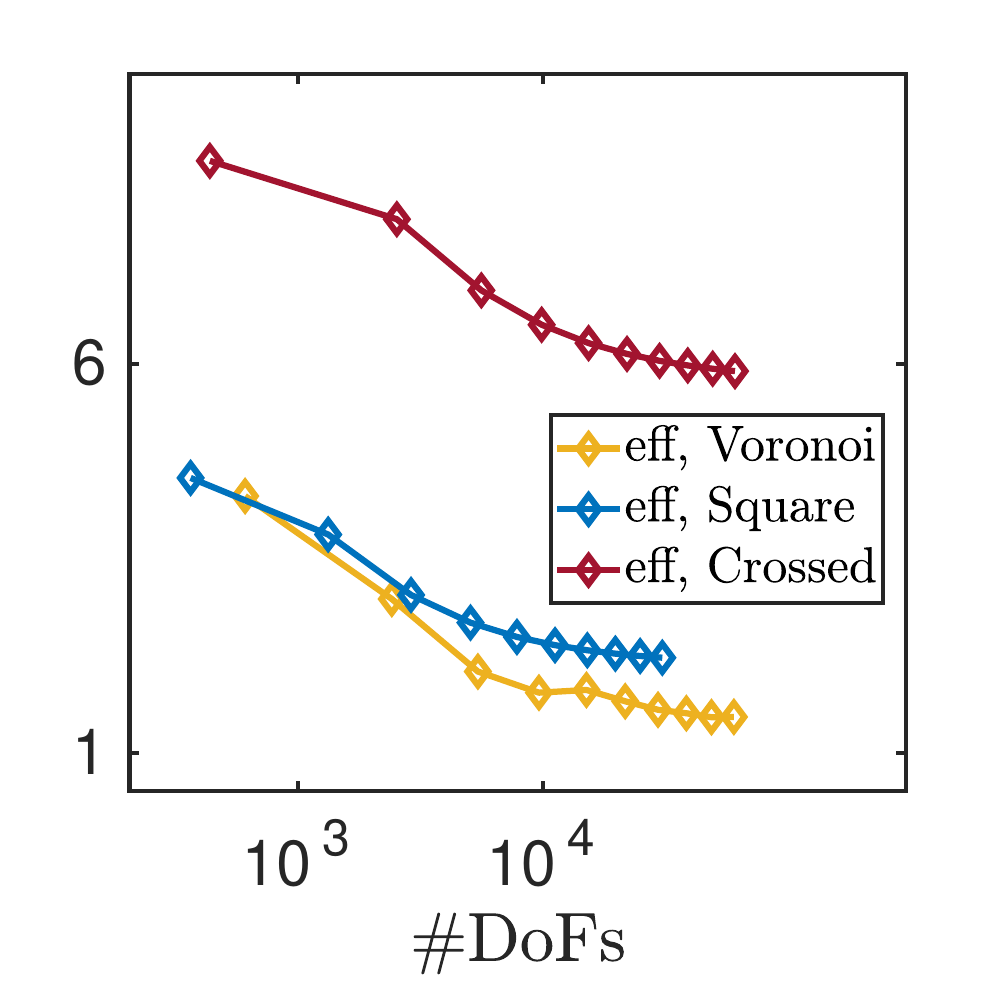}  
    \caption{Example 1. Behaviour of the error $|\lambda_{1,h} - \lambda_1|$, global error estimator $\eta$ (left), and effectivity index $\textnormal{eff}$ (right) for the first eigenvalue $\lambda_1$ across various meshes under uniform refinement.  }\label{convergence_clamped}
\end{figure}
\subsection{Example 1: Behaviour under uniform refinement: 2D case.} 
We consider the vibration problem \eqref{MPr} with boundary clamped boundary conditions as in \eqref{CP} for the unit square $\Omega = (0,1)^2$ under a variety of discretisations (see Figure~\ref{fig:polygonal}-\ref{fig:crossed}). It is well known that the lowest eigenvalue of this problem is given by $\lambda_1 \approx 1294.93397959171$ (see e.g. \cite{Bjorstad1999, Feng2024}). In addition, the associated eigenfunction $u_1$ is a smooth function, which will not affect the convergence rate. 

The curves for the error $|\lambda_{1,h} - \lambda_1|$, and global error estimator $\eta^2$ under uniform refinement are shown in Figure~\ref{convergence_clamped}. Note that the error is bounded above by the estimator, confirming the reliability of the method (see Theorem~\ref{th:reliability}). In addition, we observe the expected convergence rate of $O(h^2)$ given in \eqref{double_conv_1}. Finally, the effectivity index $\textnormal{eff}$ remains bounded for each mesh tested accordingly to Corollary~\ref{global_eff}.
\begin{figure}[h!]
    \centering
    \includegraphics[width=.85\textwidth,trim={1.2cm 0.1cm 12.cm 3.7cm},clip]{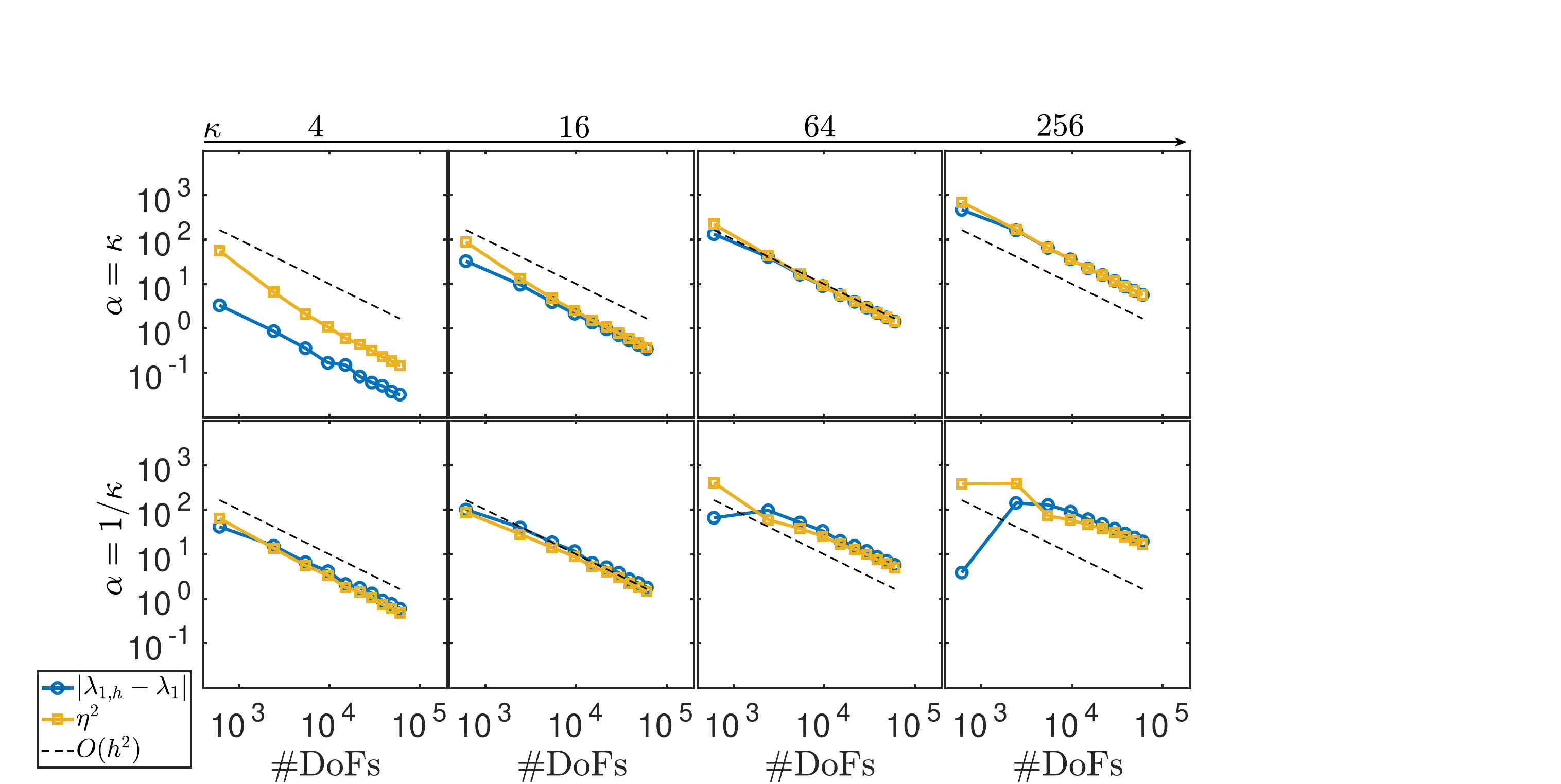}
    \caption{Example 2. Curves of error $|\lambda_{1,h}-\lambda_1|$, and global error estimator $\eta$ for the first eigenvalue $\lambda_1$ in the Voronoi mesh under uniform refinement with a variation of the stabilisation parameter $\alpha$.}\label{convergence_stab}
\end{figure}
\subsection{Example 2: The role of the stabilisation.}\label{ex:2}
For this numerical experiment, we focus on the vibration problem \eqref{MPr} where $\Omega =(0,1)^2$, and we consider simply supported boundary conditions \eqref{CP}. Notably, this boundary condition does not alter the discrete formulation presented in Section~\ref{SEC:DISCRETE}. The main difference is that the linear system incorporates the degrees of freedom corresponding to $\partial_{\boldsymbol{n}} u$. A key advantage of this example is that the first eigenvalue is known explicitly, given by $\lambda_1 = 4\pi^4 \approx 389.6364$ (see \cite{Hu2015}). So we can compute the exact error $|\lambda_{1,h}-\lambda_1|$.

Here, we analyse the influence of the stabilisation operator by introducing two coefficients $\alpha_\Delta, \alpha_0$ in the stabilisation term (see \cite{MRV2018}) as follows:
$$\alpha_{\Delta} s_K^{\Delta}(u_h-\PiK u_h,u_h-\PiK u_h),\quad   \alpha_0 s_{K}^0(u_h-\PiK u_h,u_h-\PiK u_h).$$

These coefficients appear both in the discrete weak formulation (see Problem~\ref{DiscVP}) and the global stabilisation estimator $S$. For simplicity, we fix the Voronoi mesh shown in Figure~\ref{fig:polygonal}, and we set $\alpha_\Delta, \alpha_0=\alpha\in \{\kappa, 1/\kappa\}$, where $\kappa=\{4, 16, 64, 256\}$. Similar results are obtained with other meshes.

In Figure~\ref{convergence_stab}-\ref{eff_stab}, we report the curves for the error $|\lambda_{1,h} - \lambda_1|$ and global error estimator $\eta^2$ (respectively) for the lowest eigenvalue $\lambda_1$ computed by the method for different values of $\alpha$ under uniform refinement. We observe that the SLEPC solver successfully computes an eigenvalue for each uniform refinement and every proposed value of $\alpha$. However, the results indicate that for the values $\alpha = 1/64$ and $\alpha=1/256$, the expected convergence rate of $O(h^2)$ deteriorates due to the influence of this parameter. This observation allows us to extend the results in \cite{MRV2018} to the minimal risk interval $\alpha \in [1/64,256]$, noting that more refinements are required to improve the accuracy of the eigenvalue computed for high values of $\alpha$. 
\begin{figure}[h!]
    \centering
    \includegraphics[width=.85\textwidth,trim={1.2cm 0.2cm 12.cm 3.6cm},clip]{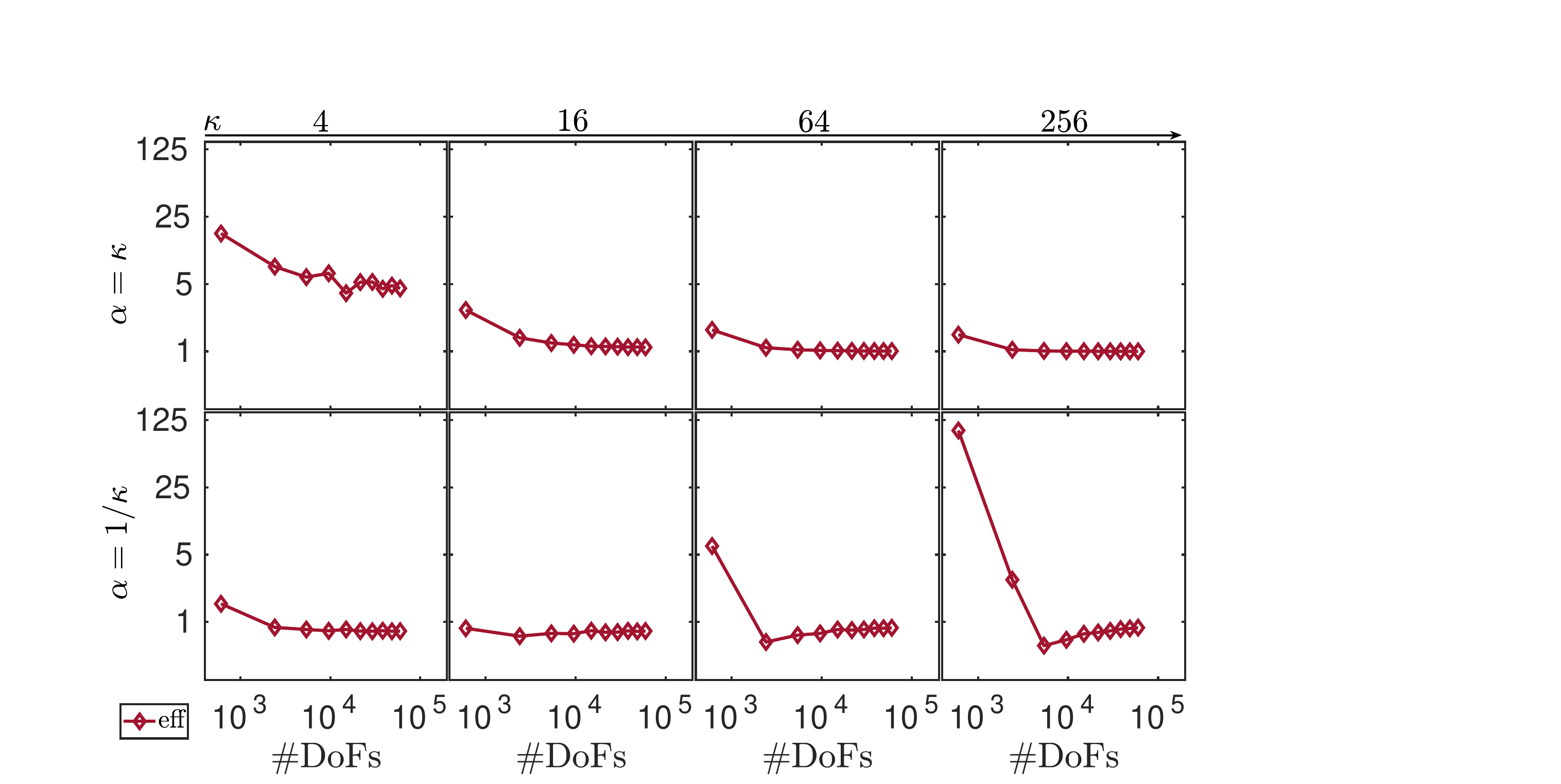}
    \caption{Example 2. Curves of effectivity index $\textnormal{eff}$ for the first eigenvalue $\lambda_1$ in the Voronoi mesh under uniform refinement with a variation of the stabilisation parameter $\alpha$.}\label{eff_stab}
\end{figure}
\subsection{Example 3: Adaptivity in 2D.}
In this test, we study the vibration problem \eqref{MPr} defined in the classical L-shape domain given by $\Omega=(0,1)^2 \setminus (1/2,1)^2$ with clamped boundary conditions \eqref{CP}. The domain is discretised with a Voronoi-type mesh (see Figure~\ref{fig:L}). It is well known that the first four eigenvalues are given by $\lambda_1 \approx  6704.2982$, $\lambda_2 \approx  11055.5189$, $\lambda_3 \approx  14907.0816$, and $\lambda_4 \approx 26157.9673$ (see e.g. \cite{MoRo2009, MRV2018}). 
\begin{figure}[h!]
    \centering
    \subfigure[$\lambda_1$]{
            \centering
            \includegraphics[width=0.225\textwidth,trim={1.9cm 0.cm 2.1cm 1.cm},clip]{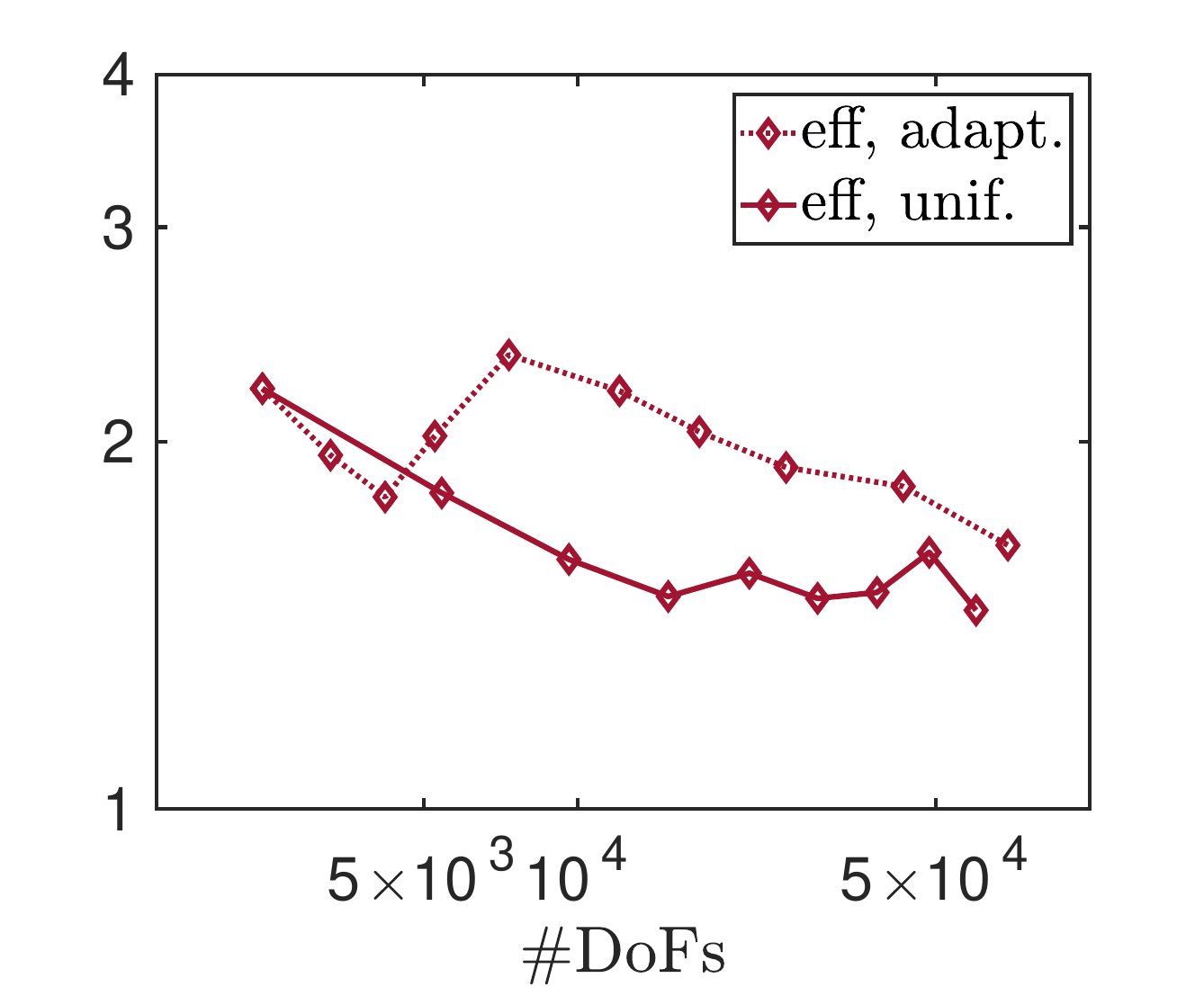}
    }
    \subfigure[$\lambda_2$]{
            \centering
            \includegraphics[width=0.225\textwidth,trim={1.9cm 0.cm 2.1cm 1.cm},clip]{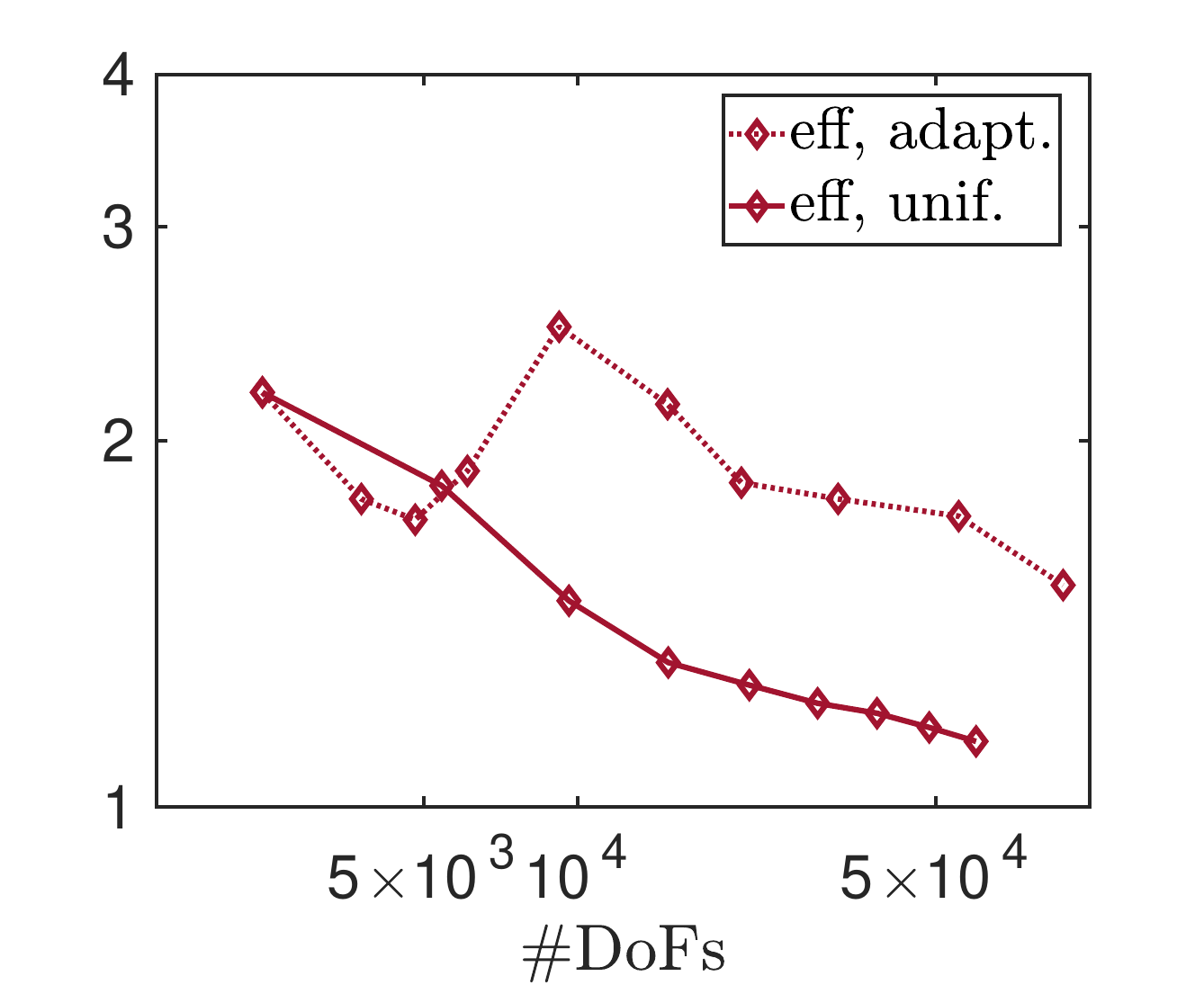}
    }
    \subfigure[$\lambda_3$]{
            \centering
            \includegraphics[width=0.225\textwidth,trim={1.9cm 0.cm 2.1cm 1.cm},clip]{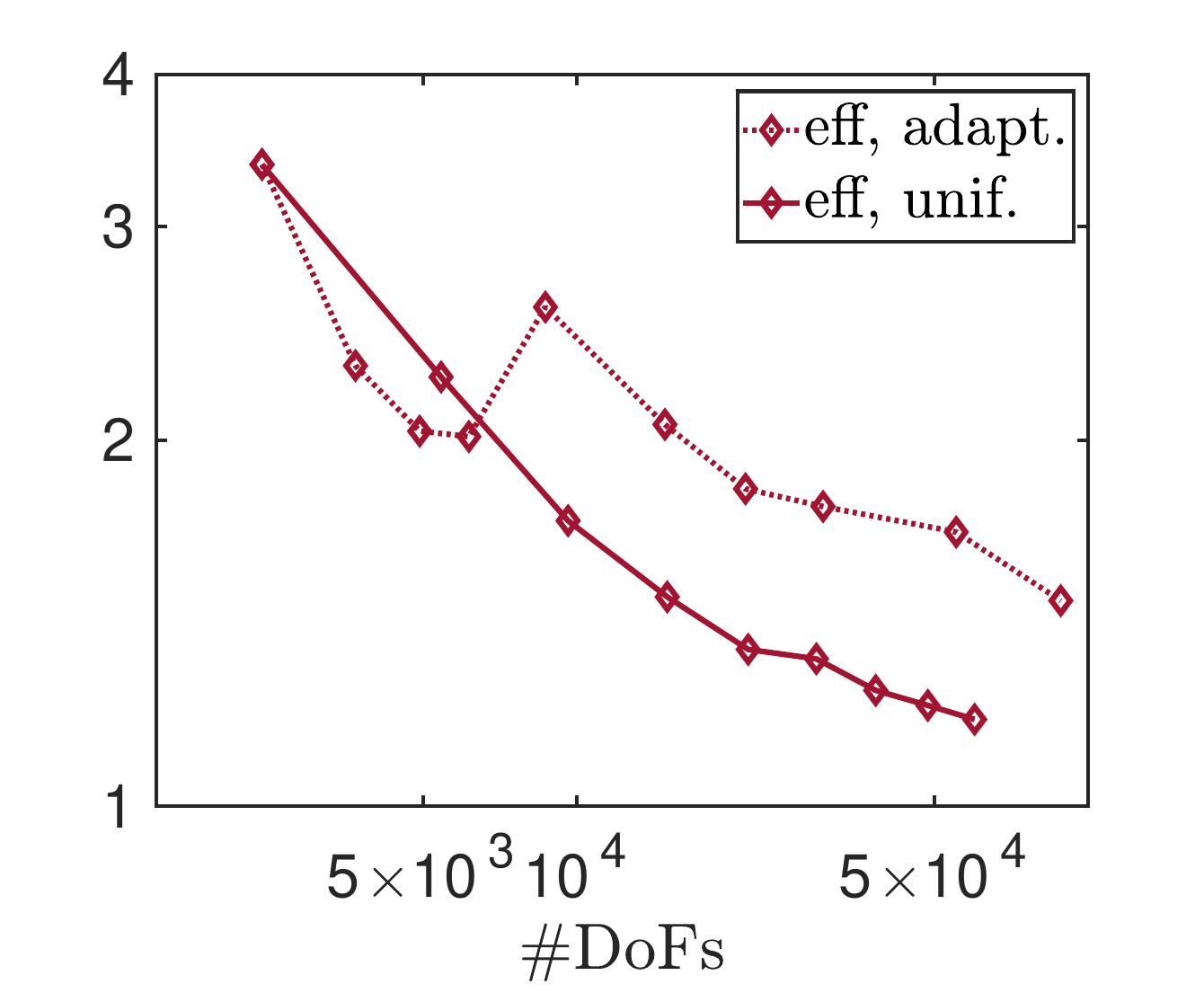}
    }
    \subfigure[$\lambda_4$]{
            \centering
            \includegraphics[width=0.225\textwidth,trim={1.9cm 0.cm 2.1cm 1.cm},clip]{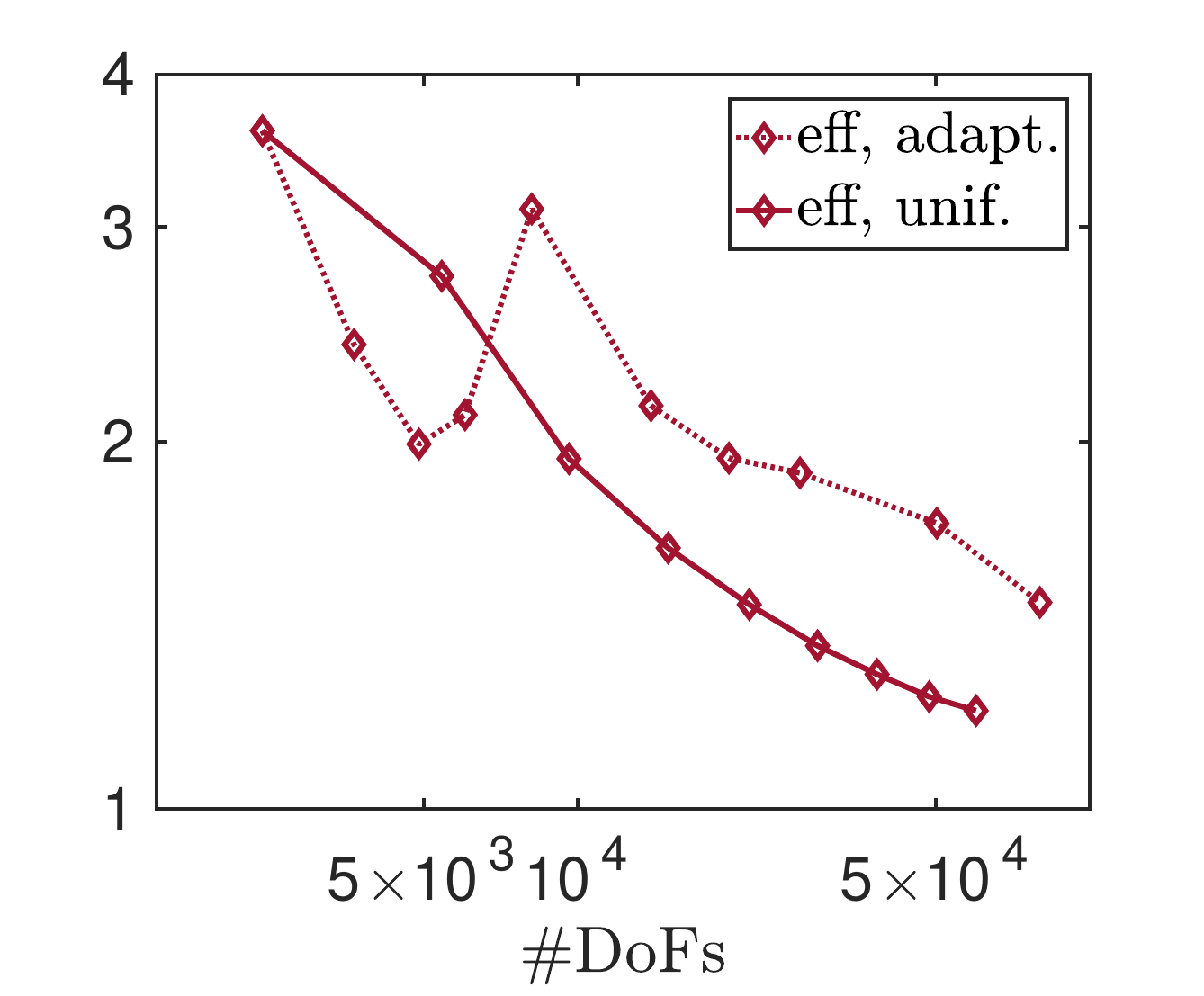}
    }
    \caption{Example 3. Curves of effectivity index $\textnormal{eff}$ for the first four eigenvalues $\lambda_i$ on the L-shaped domain under adaptive and uniform refinement.}
    \label{eff_L}
\end{figure}

Figure~\ref{convergence_L} reports the convergence history of the method for the error $|\lambda_{i,h}-\lambda_i|$ ($i\in \{1,2,3,4\}$), and the estimator $\eta$ under adaptive and uniform refinement. We observe that the adaptive refinement outperforms the uniform refinement in the presence of a singularity in the solution, expected by the re-entry corner of the L-shaped domain. Moreover, the effectivity index $\textnormal{eff}$ remains bounded between 1 and 4 (see Figure~\ref{eff_L}) confirming the efficiency and reliability of the method. Finally, 
Figure~\ref{solution_L} shows snapshots of the eigenfunctions $u_i$ after 9 adaptive refinement steps driven by $\eta$.
\begin{figure}[h!]
    \centering
    \subfigure[$\lambda_1$.]{\includegraphics[width=0.49\textwidth,trim={8cm 3cm 3.3cm 3.6cm},clip]{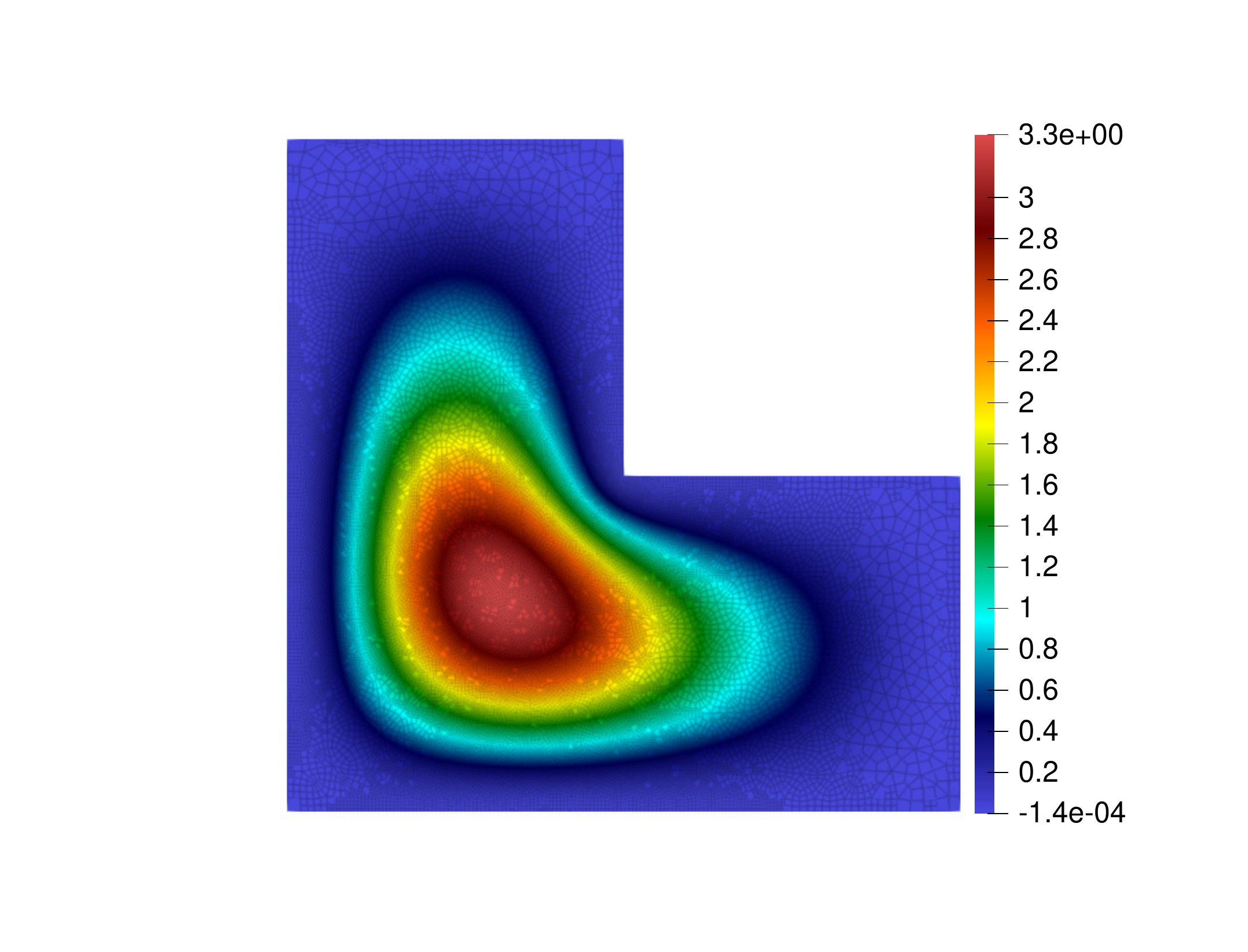}}
    \subfigure[$\lambda_2$.]{\includegraphics[width=0.49\textwidth,trim={8cm 3cm 3.3cm 3.6cm},clip]{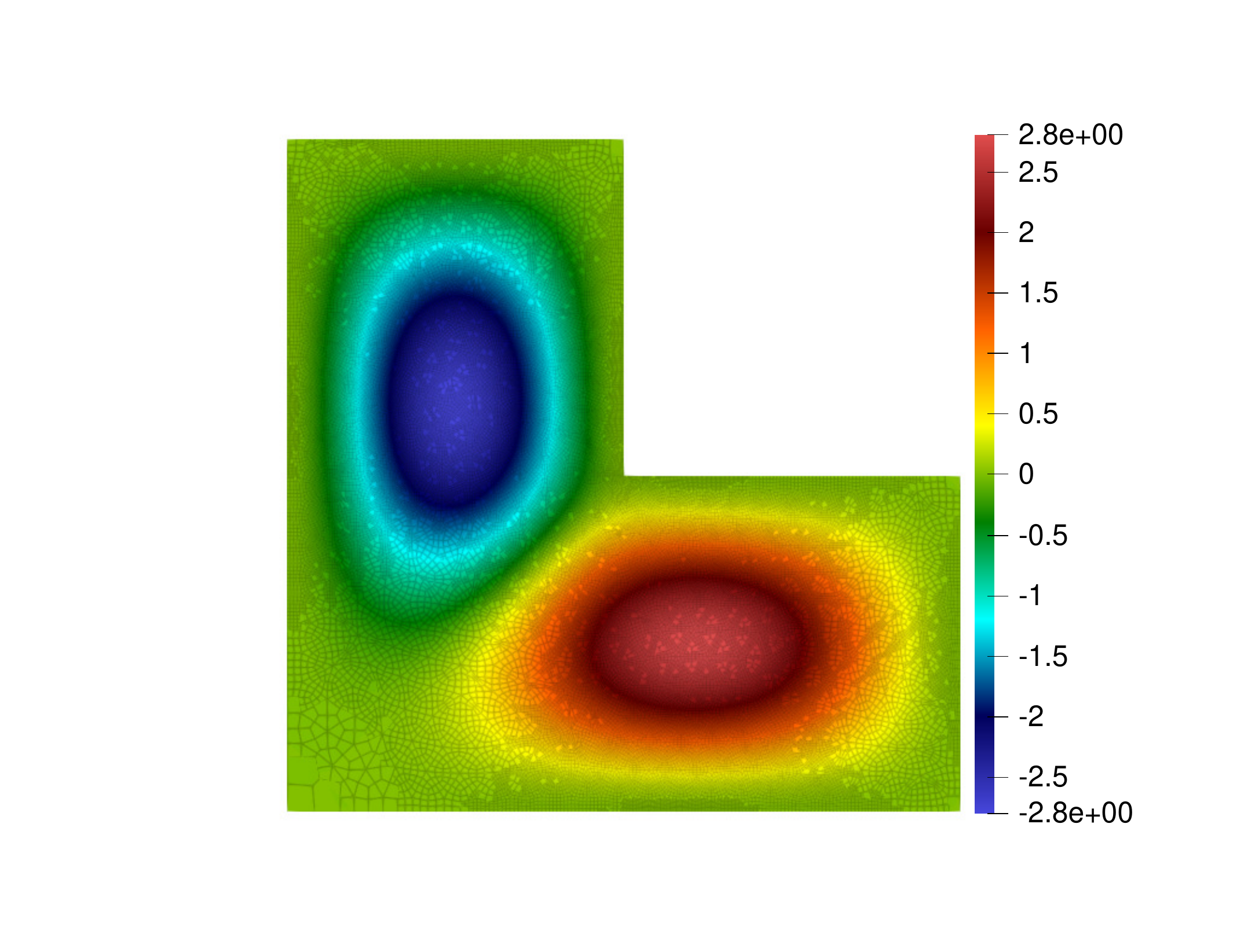}} 
    \subfigure[$\lambda_3$.]{\includegraphics[width=0.49\textwidth,trim={8cm 3cm 3.3cm 3.6cm},clip]{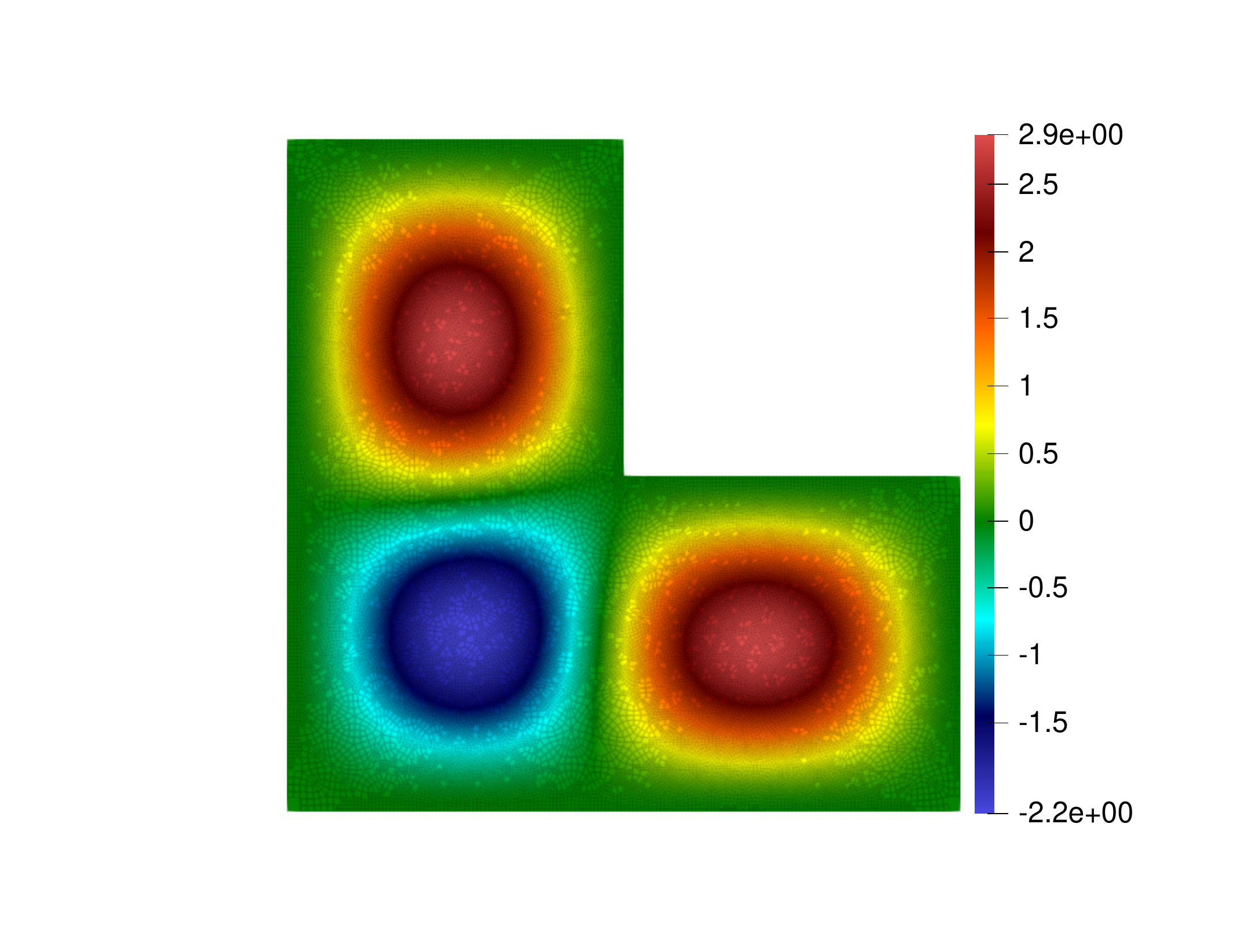}}
    \subfigure[$\lambda_4$.]{\includegraphics[width=0.49\textwidth,trim={8cm 3cm 3.3cm 3.6cm},clip]{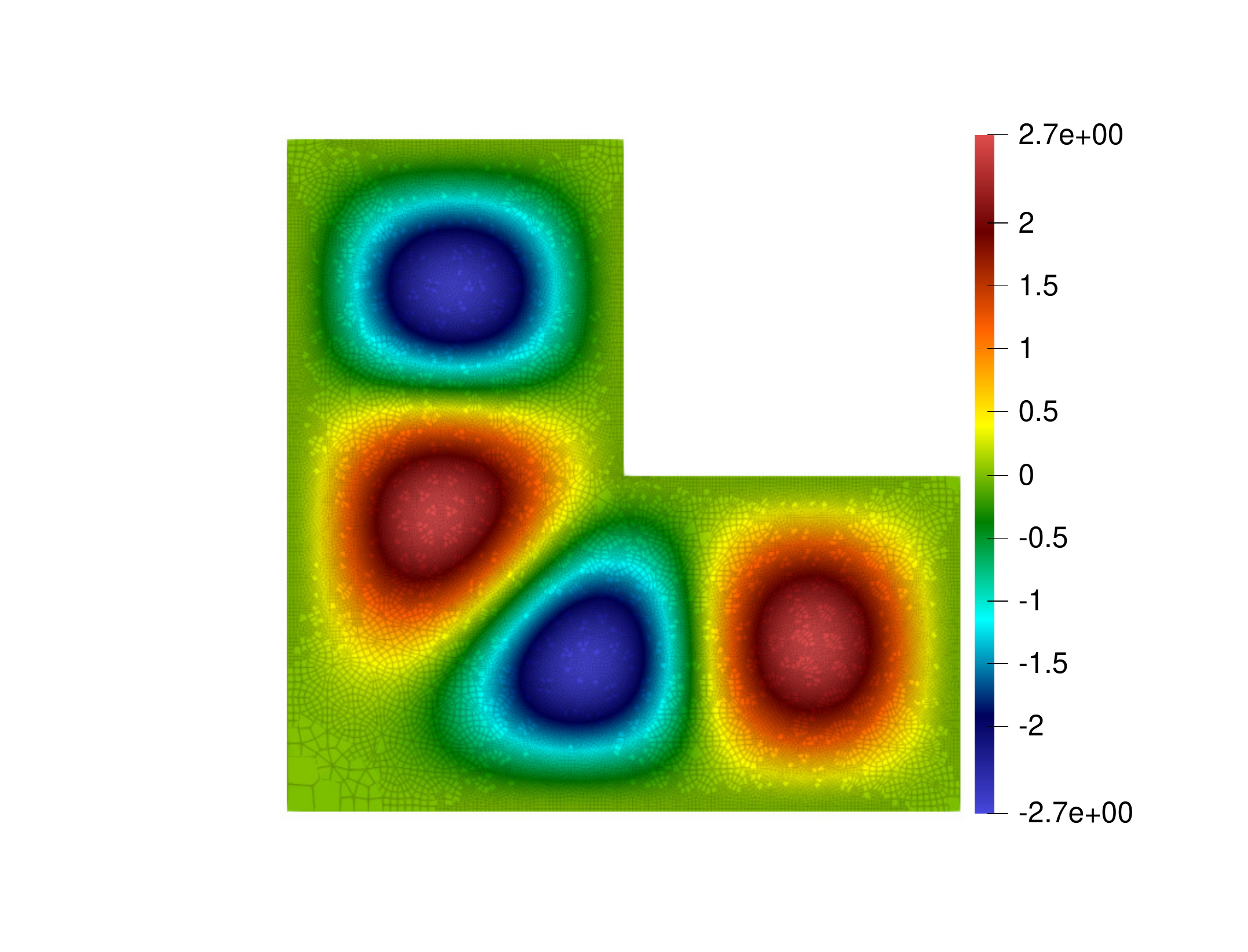}}
    \caption{Example 3. Snapshots of the eigenfunction $u_i$ in the L-shaped mesh for distinct eigenvalues $\lambda_i$ after 9 refinement steps driven by $\eta$.}\label{solution_L}
\end{figure}
\begin{figure}[h!]
    \centering
    \subfigure[$\lambda_1$]{
            \centering
            \includegraphics[width=0.48\textwidth,trim={2.85cm 0.cm 3.5cm 0.75cm},clip]{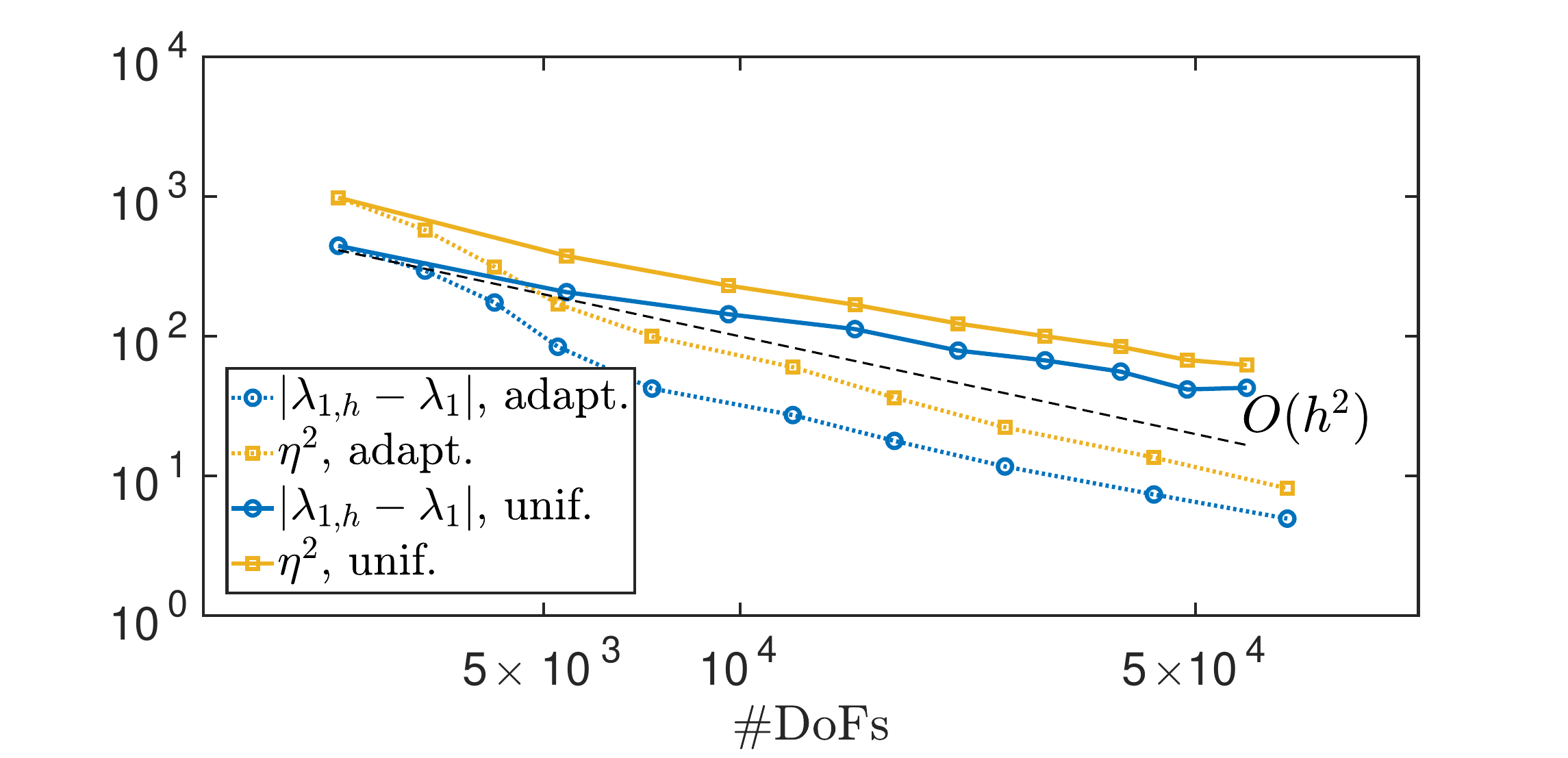}
    }
    \subfigure[$\lambda_2$]{
            \centering
            \includegraphics[width=0.48\textwidth,trim={2.85cm 0.cm 3.5cm 0.75cm},clip]{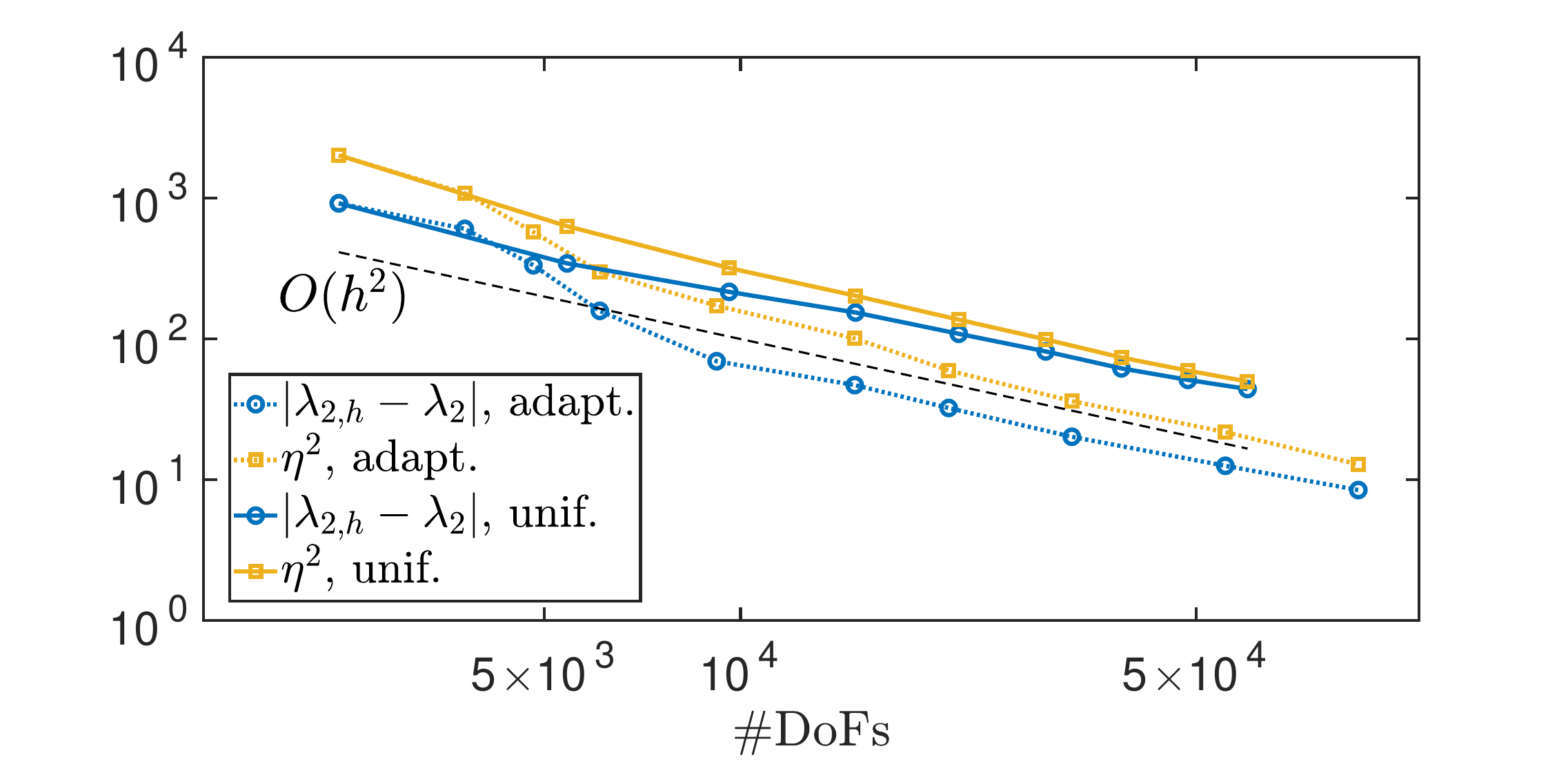}
    }
    \subfigure[$\lambda_3$]{
            \centering
            \includegraphics[width=0.48\textwidth,trim={2.85cm 0.cm 3.5cm 0.75cm},clip]{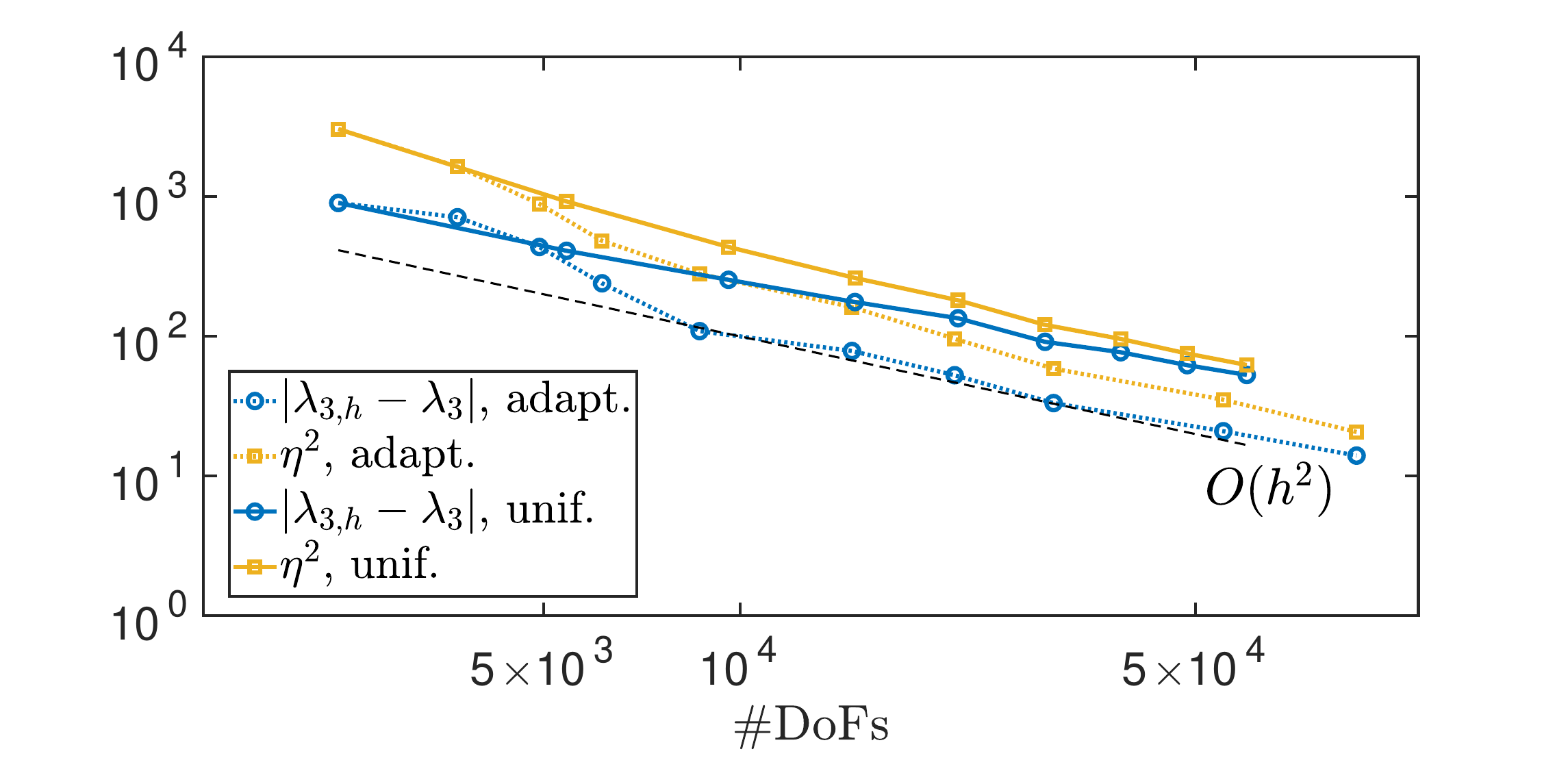}
    }
    \subfigure[$\lambda_4$]{
            \centering
            \includegraphics[width=0.48\textwidth,trim={2.85cm 0.cm 3.5cm 0.75cm},clip]{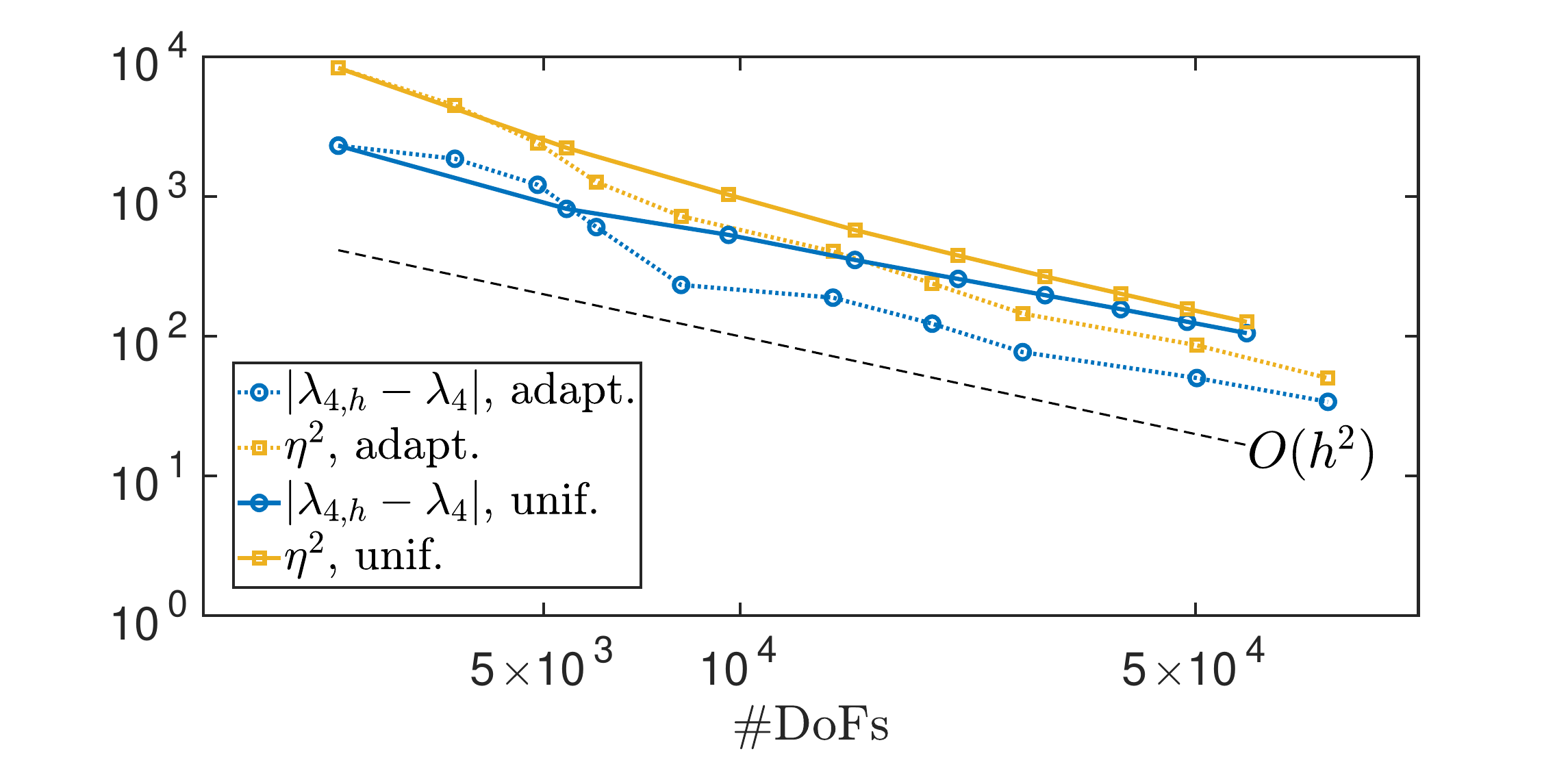}
    }
    \caption{Example 3. Curves of error $|\lambda_{i,h}-\lambda_i|$ and global error estimator $\eta$ for the first four eigenvalues $\lambda_i$ on the L-shaped domain under adaptive and uniform refinement.}
    \label{convergence_L}
\end{figure}

\subsection{Example 4: Vibration on an aircraft wing.} 
As an application, we consider \eqref{MPr} defined in a 2D simplification of an aircraft wing embedded in the unit square whose geometry is shown in Figure~\ref{fig:wingPlane}, the clamped boundary conditions \eqref{CP} are considered only on the part of the wing that is attached to the aircraft fuselage ($y=0$).
\begin{figure}[h!]
    \centering
    \includegraphics[width=.85\textwidth,trim={3.25cm 0.cm 4.cm 1.cm},clip]{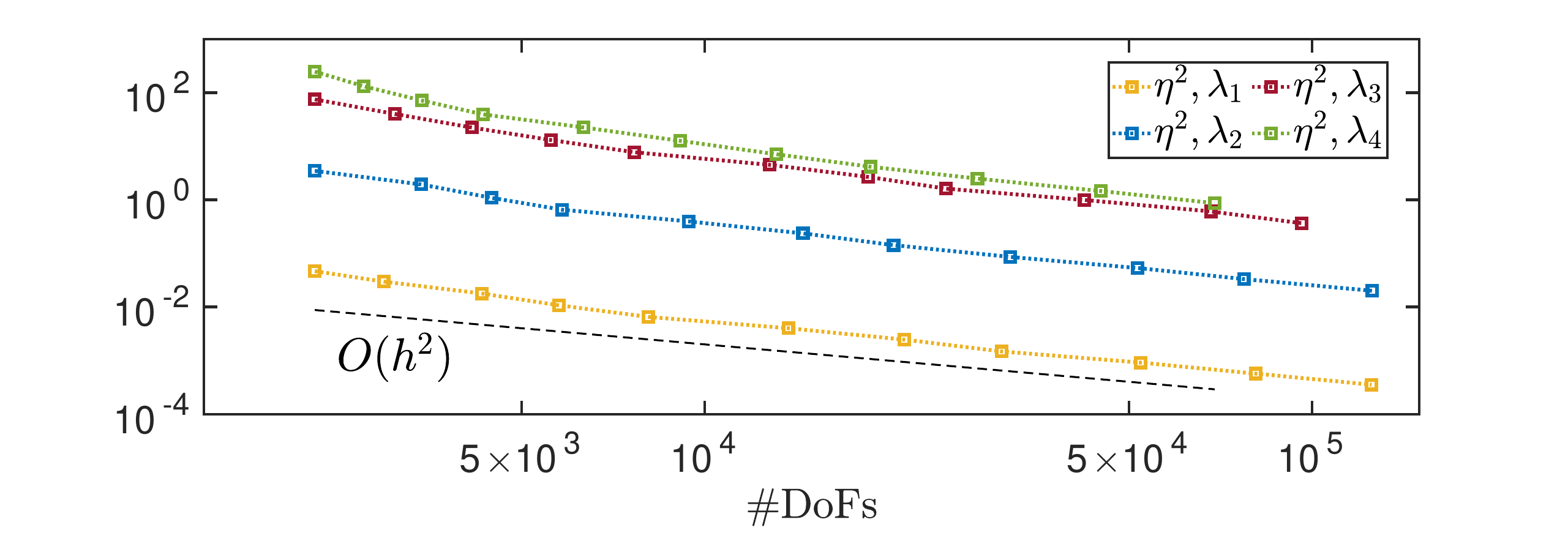}
    \caption{Example 4. Behaviour of the global error estimator $\eta$ for the first four eigenvalues $\lambda_i$ on the aircraft wing.}
    \label{convergence_wing}
\end{figure}

The initial mesh considered on the adaptive refinement routine is constructed by applying four iterations of polymesher's uniform refinement to the mesh shown in Figure~\ref{fig:wingPlane}. To the best of the author's knowledge, no previous approximations of the first four eigenvalues have been reported for this geometry. Our scheme computes these eigenvalues after nine adaptive refinements: $\lambda_1 \approx 25.47102714143987$, $\lambda_2 \approx 453.1962170527747$, $\lambda_3 \approx 2478.31700898077$, $\lambda_4 \approx 4535.233536949408$.
\begin{figure}[h!]
    \centering
    \subfigure[$\lambda_1$.]{\includegraphics[width=0.49\textwidth,trim={11.5cm 3cm 6.45cm 3.6cm},clip]{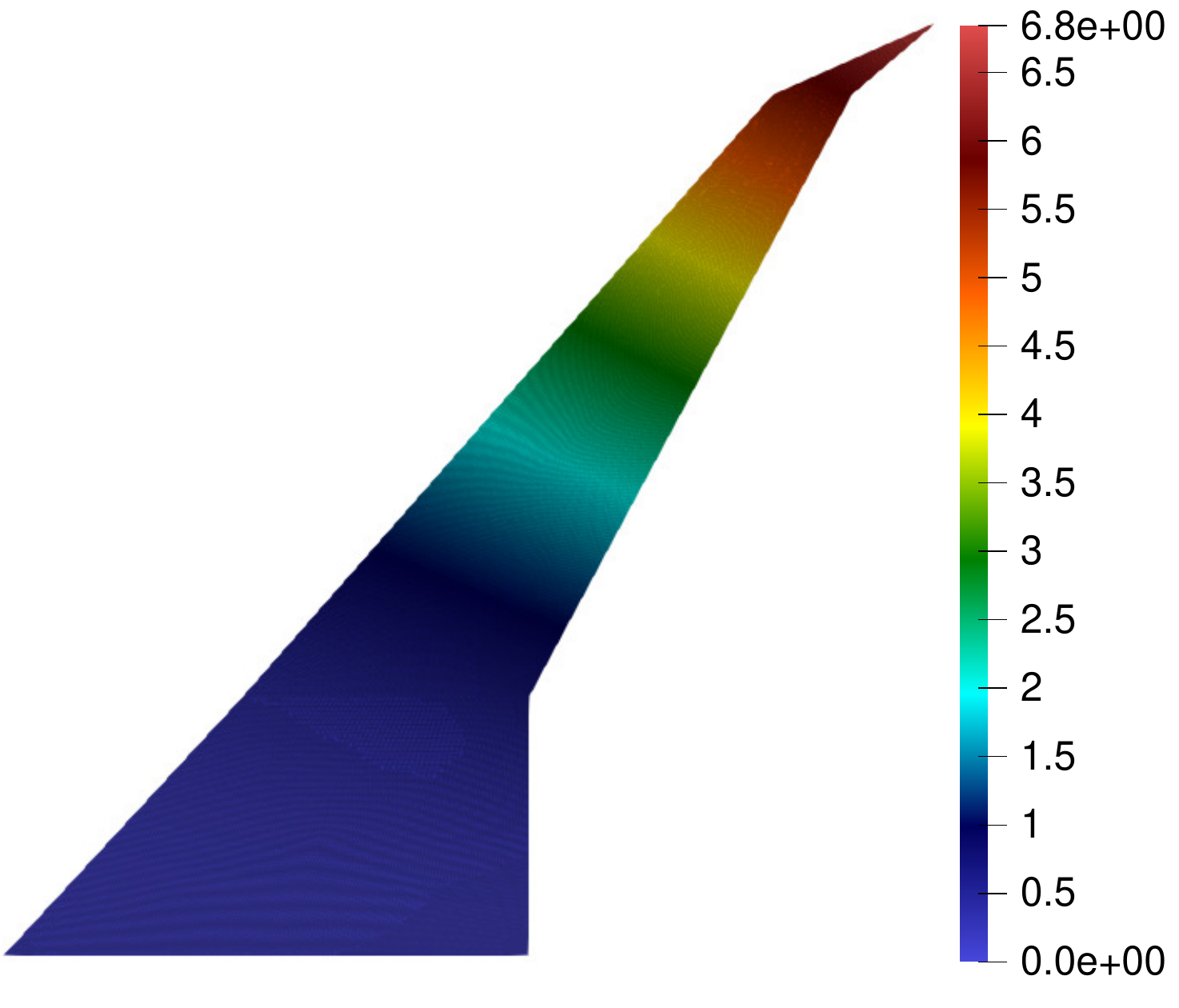}}
    \subfigure[$\lambda_2$.]{\includegraphics[width=0.49\textwidth,trim={11.5cm 3cm 6.45cm 3.6cm},clip]{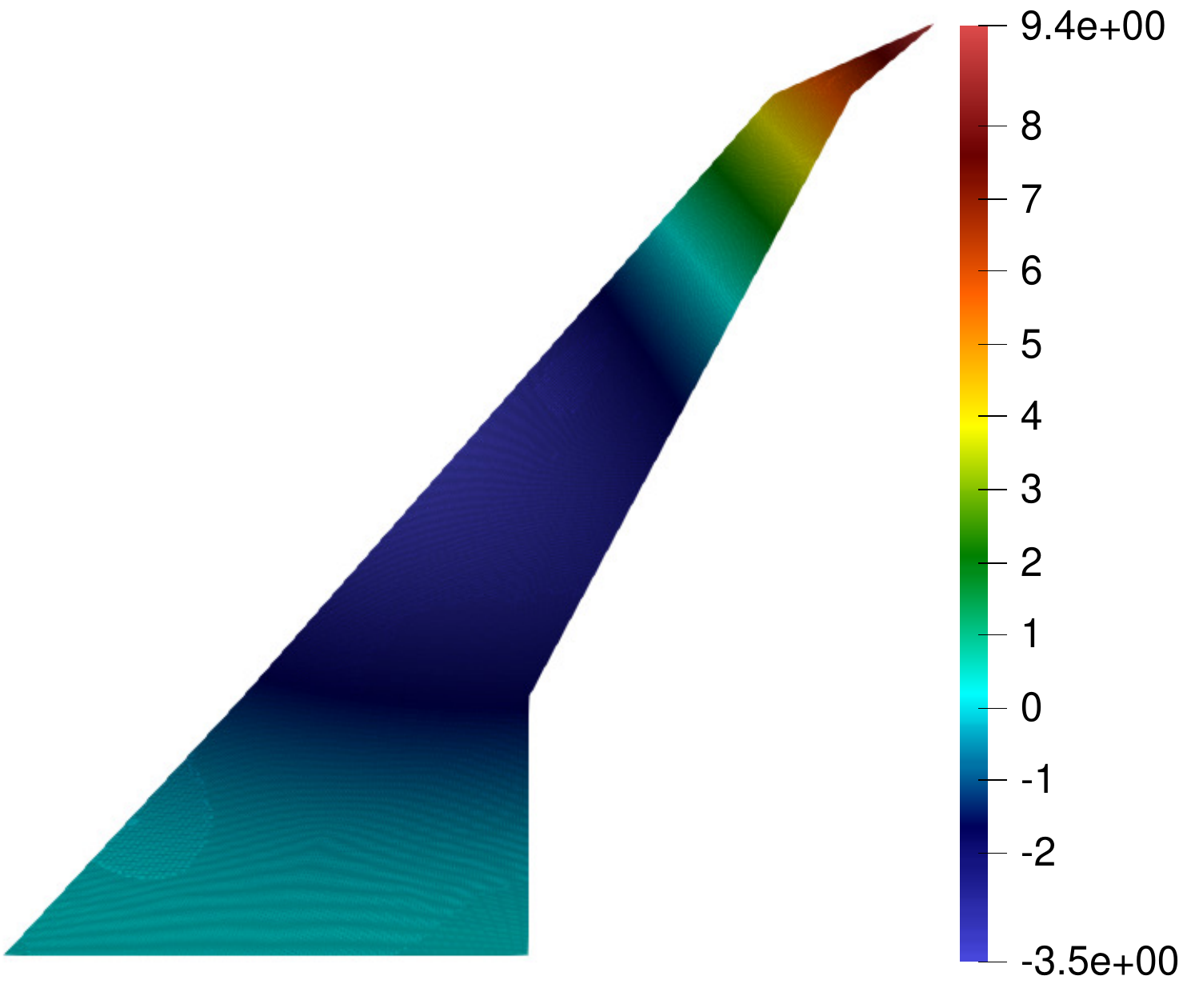}} 
    \subfigure[$\lambda_3$.]{\includegraphics[width=0.49\textwidth,trim={11.5cm 3cm 6.45cm 3.6cm},clip]{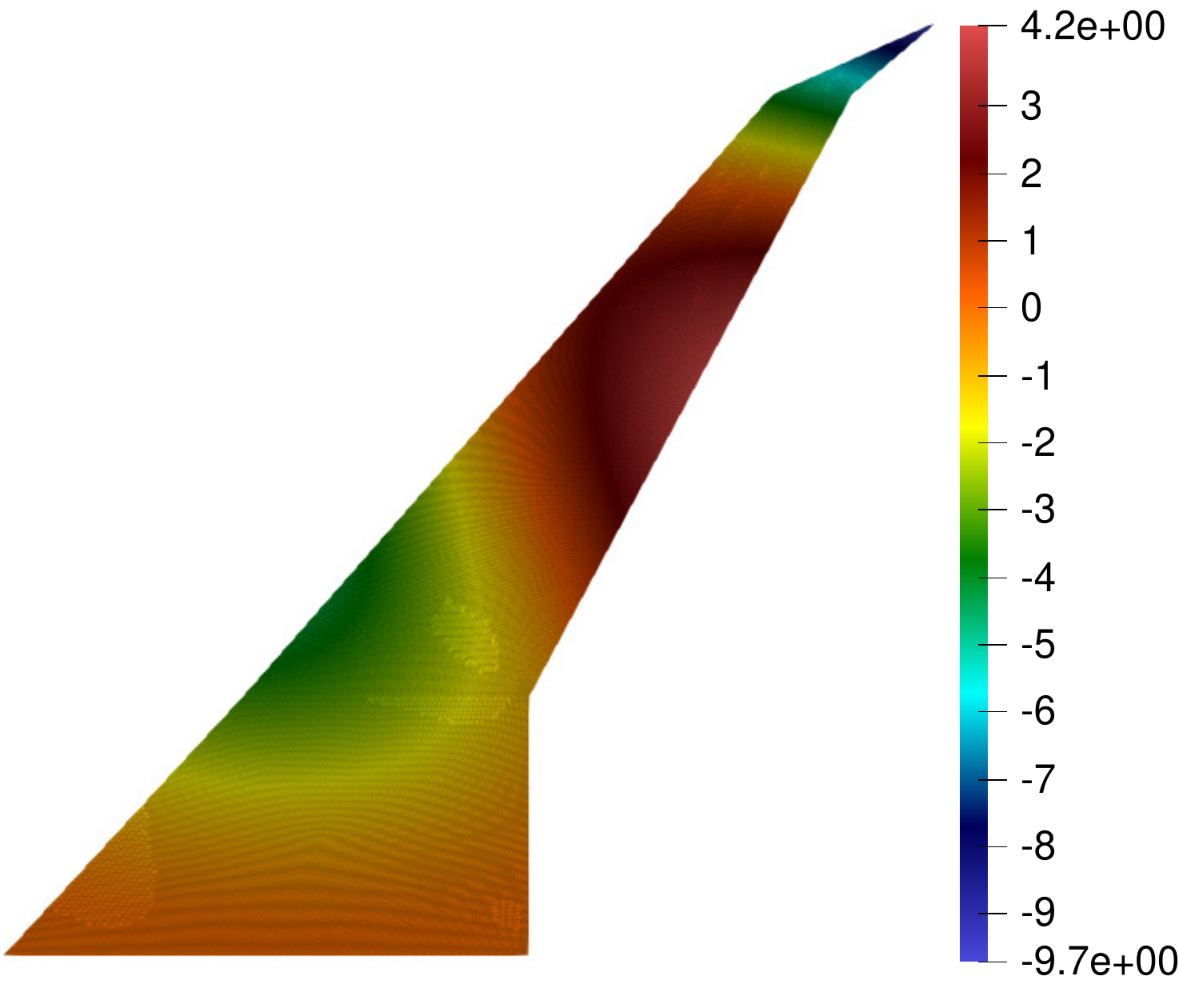}}
    \subfigure[$\lambda_4$.]{\includegraphics[width=0.49\textwidth,trim={11.5cm 3cm 6.45cm 3.6cm},clip]{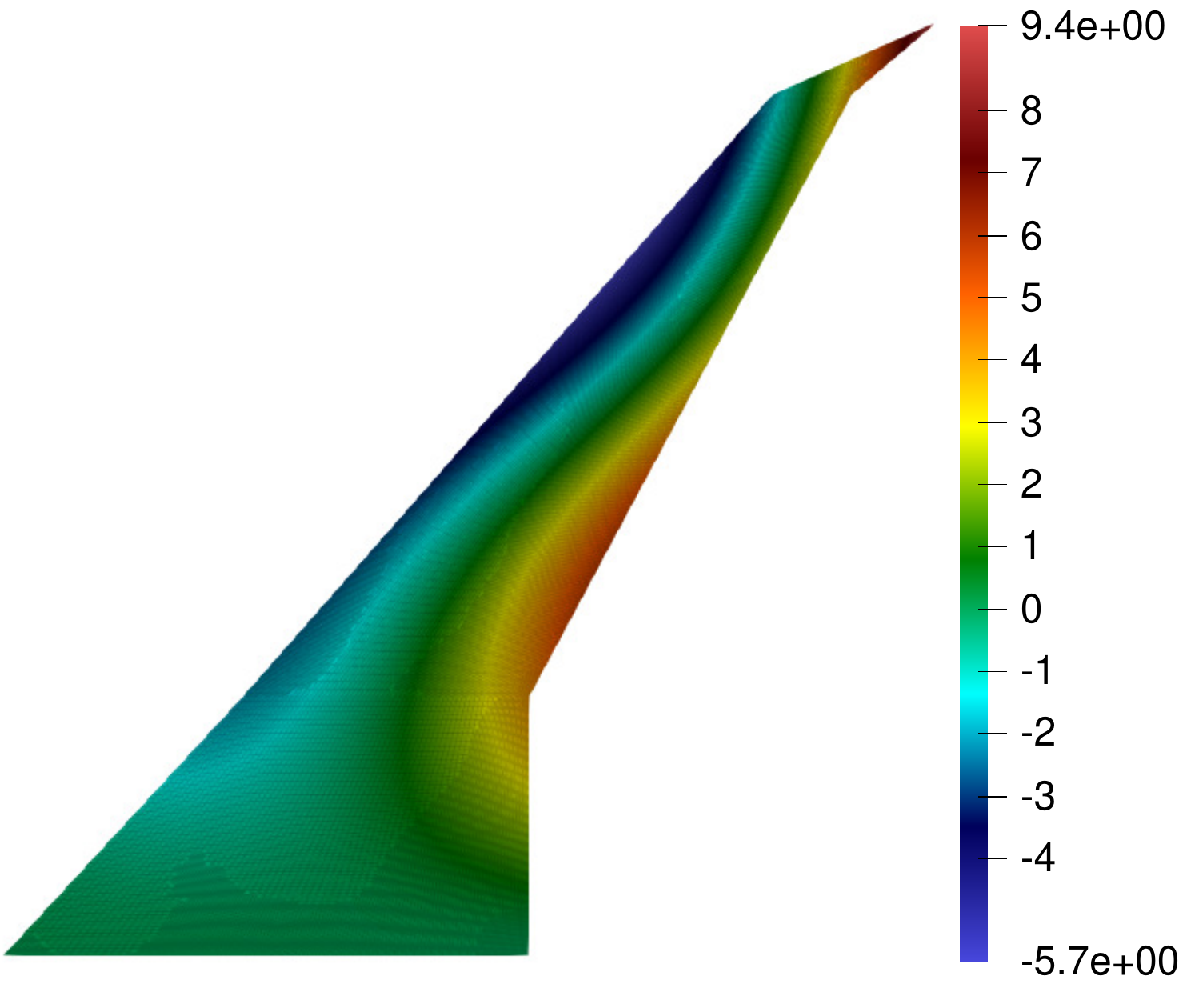}}
    \caption{Example 4. Snapshots of the eigenfunction $u_i$ in the aircraft wing mesh for distinct eigenvalues $\lambda_i$ after 9 refinement steps driven by $\eta$.}\label{solution_wing}
\end{figure}

The computable global error estimator $\eta^2$ is reported in Figure~\ref{convergence_wing}, showing the optimal behaviour of $O(h^2)$ given by Corollary~\ref{global_eff}. Since exact values of $\lambda_i$, $i\in \{1,2,3,4\}$ are not provided, we can not compute the error $|\lambda_{i,h}-\lambda_i|$. However, previous examples have already confirmed that the estimator gives an upper bound of this error. The associated eigenfunctions $u_i$ are shown in Figure~\ref{solution_wing} after 9 refinement steps driven by $\eta$.

\subsection{Example 5: Behaviour under uniform refinement: 3D case.}\label{ex:5}
This numerical experiment extends Subsection~\ref{ex:2} to the three-dimensional case. The vibration problem \eqref{MPr} with simply supported boundary conditions \eqref{SSP} is now defined on the unit cube domain $\Omega = (0,1)^3$ given in Figure~\ref{fig:cube}. This boundary condition allows us to perform the test knowing exact values of $\lambda_1 = 9\pi^4\approx 876.6818$ (see \cite{Hu2015}). Furthermore, a sensitivity analysis on the influence of stabilisation terms is available in \cite{DV_camwa2022}. Based on those findings, we choose $\alpha_\Delta = 1$ and $\alpha_0 = 10^{-4}$.

Due to high computational effort, we were not able to capture the $O(h^2)$ trend. Then we decided to split the estimator contributions. This analysis is reported in  Figure~\ref{convergence_3d}, where we observe that the global volume estimator $\Xi$ with convergence rate of $O(h^4)$ dominates over the global stabilisation estimator $S$ with lower convergence rate of $O(h^2)$. However, the trend shows that $S$ becomes dominant as the uniform refinement progresses while bounding the error $|\lambda_{1,h}-\lambda_1|$. This confirms the reliability of the scheme in the 3D case with the optimal convergence rate. 
\begin{figure}[h!]
    \centering
    \includegraphics[width=.85\textwidth,trim={3.5cm 0.cm 4.cm 0.7cm},clip]{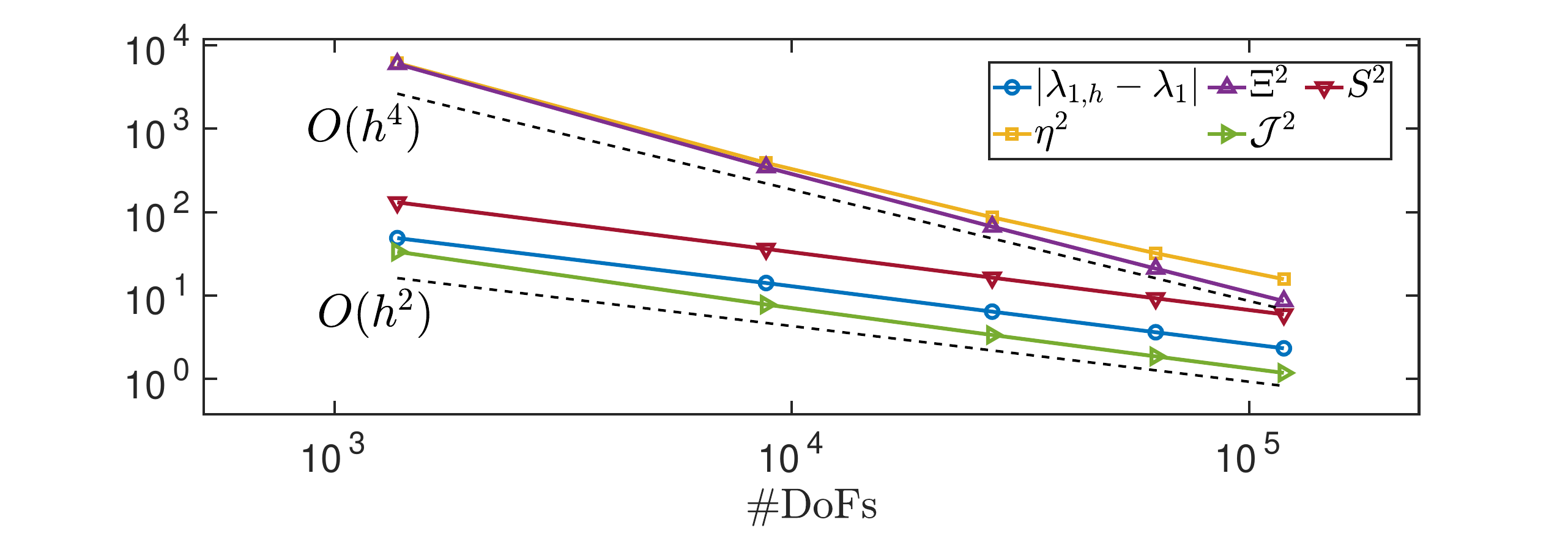}  
    \caption{Example 5. Curves of error $|\lambda_{1,h}-\lambda_1|$, global error $\eta$,  volume $\Xi$, jump $\mathcal{J}$, and stabilisation $S$ estimators for the first eigenvalue $\lambda_1$ in the unit cube under uniform refinement.}\label{convergence_3d}
\end{figure}

\subsection{Example 6: Adaptivity in 3D.}
To illustrate the applicability of the method in 3D, we consider the vibration problem \eqref{MPr} with clamped boundary conditions \eqref{CP} in the Fichera cube domain $\Omega = (0,1)^3 \setminus (1/2,1)^3$ (see Figure~\ref{fig:Fichera}). We obtain an approximation for the first eigenvalue $\lambda_1 \approx 6657.574172648315$ after 13 adaptive refinement steps. The closest approximation of this value is given in \cite{DV_camwa2022}. Similarly to Subsection~\ref{ex:5}, we select $\alpha_\Delta = 1$ and $\alpha_0 = 10^{-4}$.

The curves for the error $|\lambda_{1,h}-\lambda_1|$ and global error estimator $\eta$ are shown in Figure~\ref{convergence_3d_adapt}. Note that the adaptive refinement outperforms the uniform refinement, recovering the optimal convergence rate of $O(h^2)$. Moreover, the adaptive refinement captures the singularity of the solution close to the re-entrant corner as shown in Figure~\ref{snapshots3d}.

\begin{figure}[h!]
    \centering
    \includegraphics[width=0.49\textwidth,trim={10.cm 2.2cm 6.25cm 2.cm},clip]{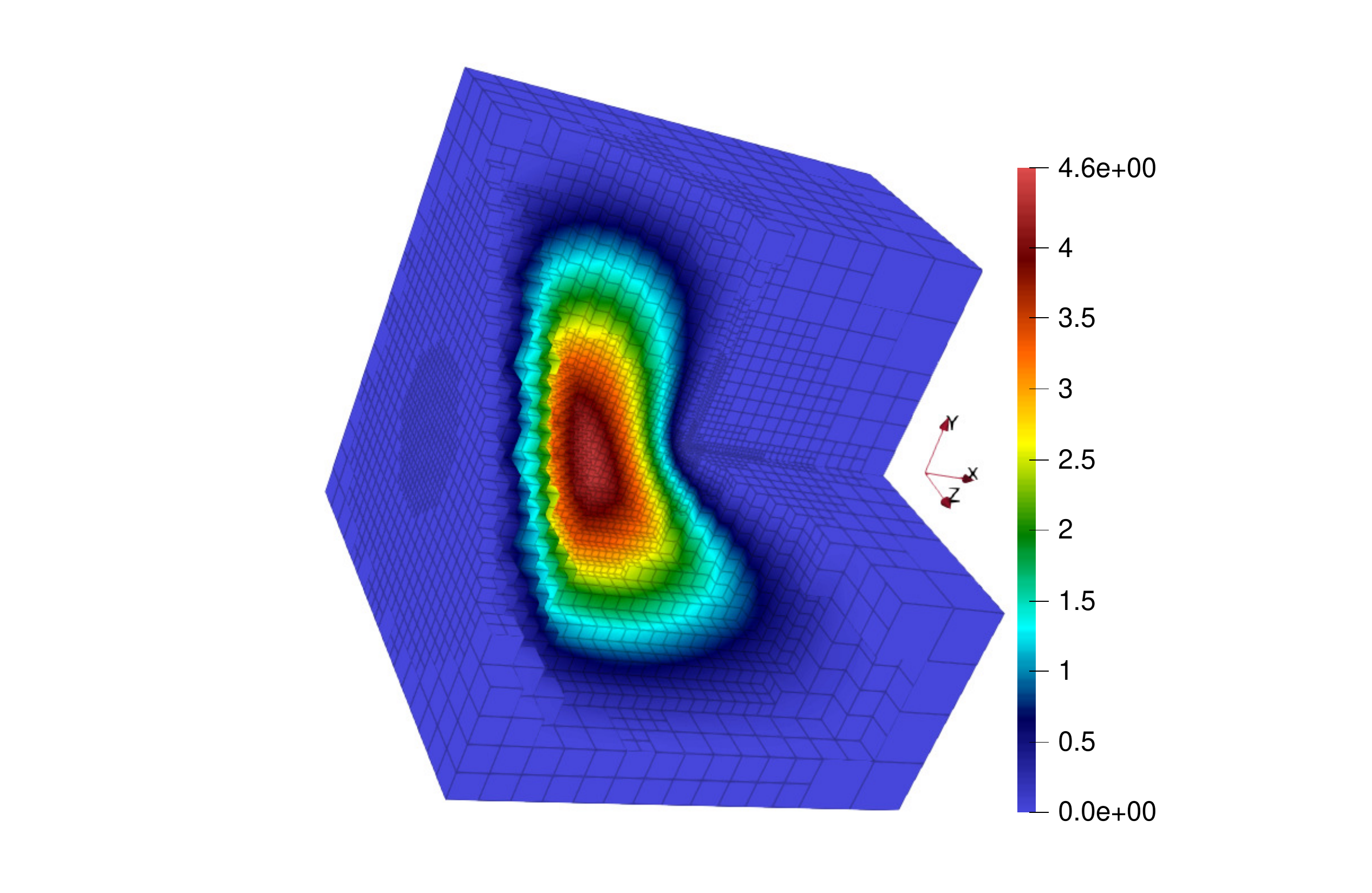}  
    \includegraphics[width=0.49\textwidth,trim={10.cm 2.2cm 6.25cm 2.cm},clip]{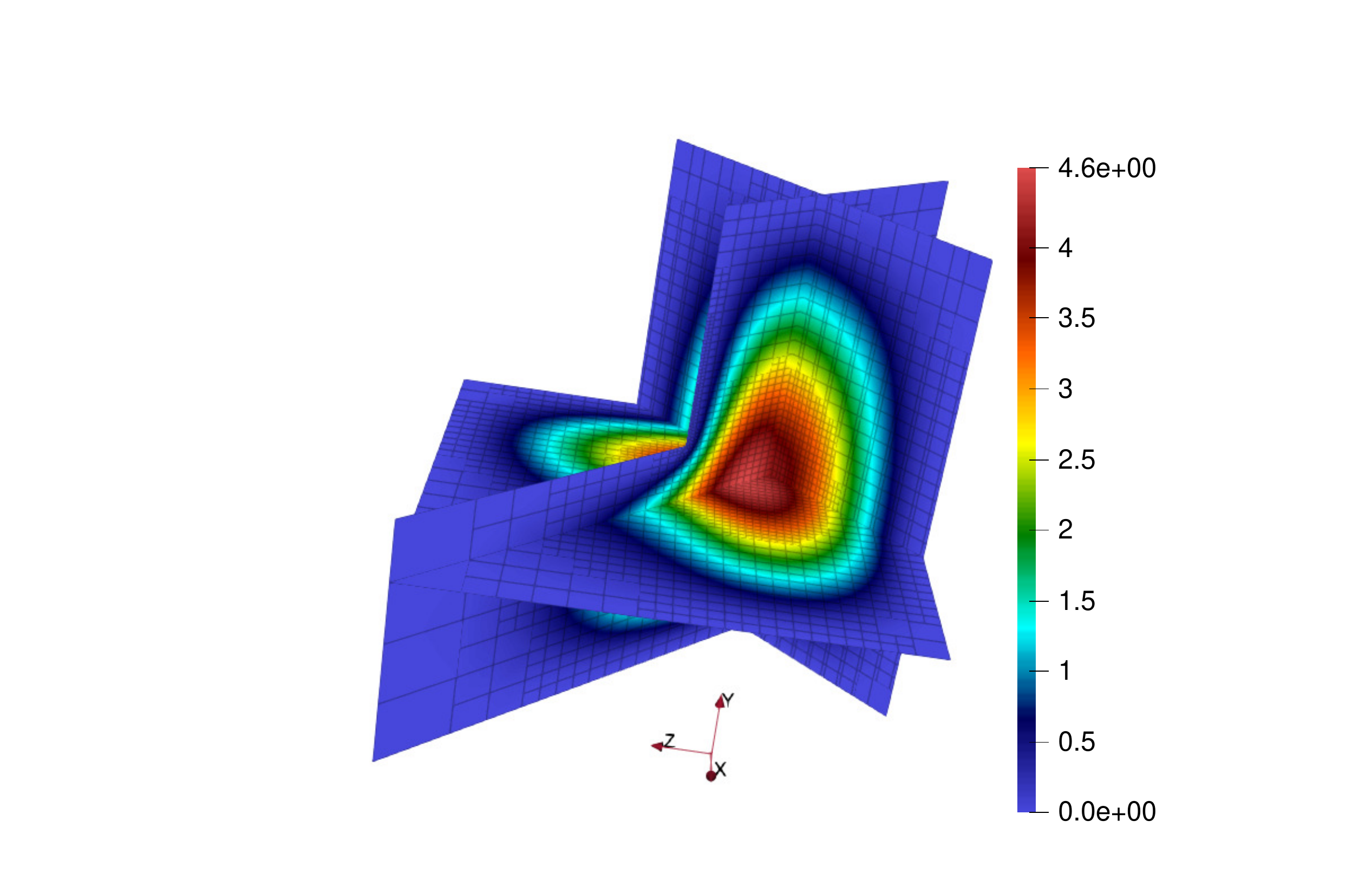}
    \caption{Example 6. Snapshots of the approximated eigenfunction $u_{1,h}$ in the Fichera cube after 14 adaptive refinement.}\label{snapshots3d}
\end{figure}
\begin{figure}[h!]
    \centering
    \includegraphics[width=0.45\textwidth,trim={0.cm 0.cm 2.2cm 1.5cm},clip]{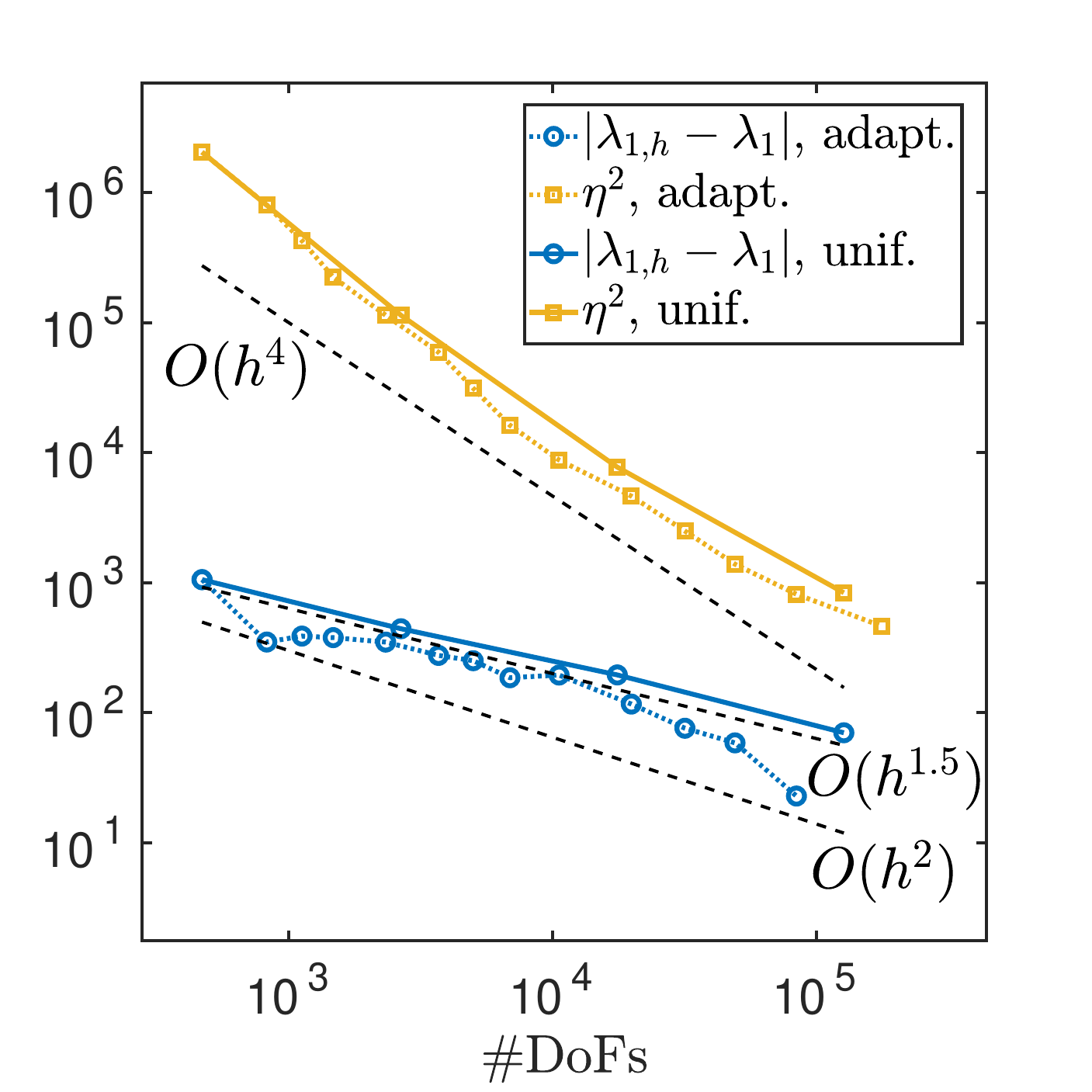}  
    \includegraphics[width=0.45\textwidth,trim={0.cm 0.cm 2.2cm 1.5cm},clip]{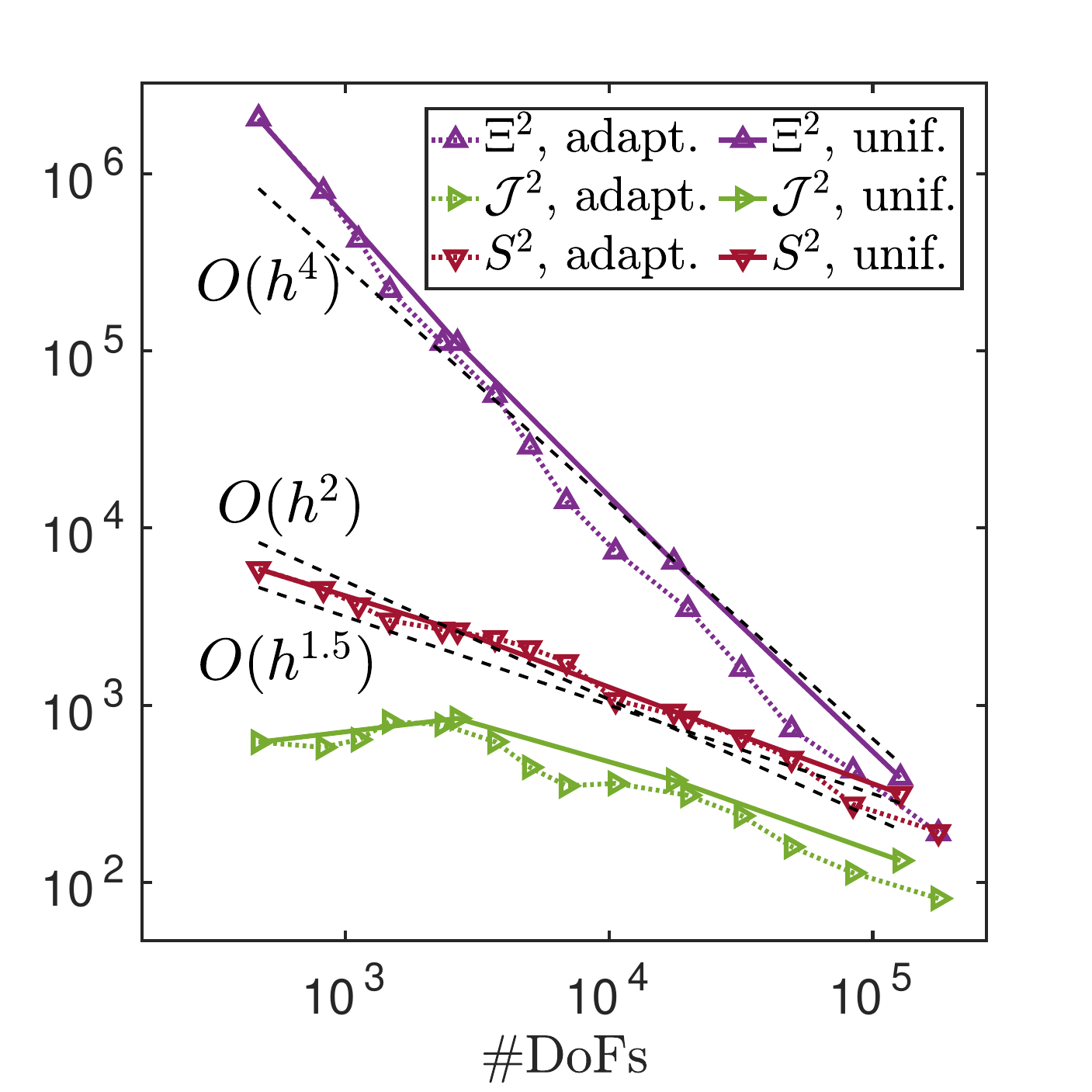} 
    \caption{Example 6. Curves of error $|\lambda_{1,h}-\lambda_1|$, global error $\eta$ (left),  volume $\Xi$, jump $\mathcal{J}$, and stabilisation $S$ (right) estimators for the first eigenvalue $\lambda_1$ in the Fichera cube under uniform and adaptive refinement.}\label{convergence_3d_adapt}
\end{figure}


\subsection*{Acknowledgements} 	
Producto derivado del proyecto INV-CIAS-4167 financiado por la Universidad Militar Nueva Granada - Vigencia 2025. 
The first author was partially supported by the European Research Council project NEMESIS (Grant No. 101115663).
The second author has been partially supported by the Australian Research Council through the \textsc{Future Fellowship} grant FT220100496.
For the the third author this is a product derived from the project INV-CIAS-4167 funded by the  Universidad Militar Nueva Granada - Validity 2025.
We kindly thank Jesus Vellojin for the support provided in the PETSC solver implementation.

\bibliographystyle{abbrv} 
\bibliography{references}
\end{document}